\newcommand{\squarebracket}[1]{[#1]}
\newcommand{\arxivurl}[1]{\href{https://arxiv.org/abs/#1}{\texttt{arXiv: #1}}}
\newcommand{\cmg}{\textup{\textsc{cmg}}}
\newcommand{\ee}{\mathrm{ee}}
\newcommand{\st}{\mathrm{s.t.}}
\newcommand{\lrbrace}[1]{\left\{#1\right\}}
\newcommand{\lrbracket}[1]{\left(#1\right)}
\newcommand{\lrsquare}[1]{\left[#1\right]}
\newcommand{\T}{\top}
\newcommand{\trace}{\mathrm{Tr}}
\newcommand{\eps}{\varepsilon}
\newcommand{\calI}{\mathcal{I}}
\newcommand{\calO}{\mathcal{O}}
\newcommand{\calT}{\mathcal{T}}
\newcommand{\calU}{\mathcal{U}}
\newcommand{\scrT}{\mathscr{T}}
\newcommand{\mb}[1]{\mathbf{#1}}
\newcommand{\norm}[1]{\left\Vert#1\right\Vert}
\newcommand{\snorm}[1]{\Vert#1\Vert}
\newcommand{\abs}[1]{\left|#1\right|}
\newcommand{\inner}[1]{\left\langle#1\right\rangle}
\newcommand{\R}{\mathbb{R}}
\newcommand{\N}{\mathbb{N}}
\newcommand{\errobj}{\mathrm{err\_obj}}
\newcommand{\errsce}{\mathrm{err\_sce}}
\newcommand{\inter}{\mathrm{inter}}
\newcommand{\KL}{\mathrm{KL}}
\newcommand{\mba}{\mathbf{a}}
\newcommand{\mbb}{\mathbf{b}}
\renewcommand{\d}{\mathbf{d}}
\newcommand{\rr}{\mathbf{r}}
\newcommand{\uu}{\mathbf{u}}
\newcommand{\vvv}{\mathbf{v}}
\newcommand{\brho}{\bm{\varrho}}
\newcommand{\E}{\mathsf{E}}
\newcommand{\PP}{\mathsf{P}}
\newcommand{\var}{\mathsf{Var}}
\newcommand{\dd}{\,\mathrm{d}}
\newcommand{\Diag}{\mathrm{Diag}}
\newcommand{\one}{\mathbf{1}}
\newcommand{\bcd}{\textup{\textsc{bcd}}}
\newcommand{\bcg}{\textup{\textsc{bcg}}}
\newcommand{\palm}{\textup{\textsc{palm}}}
\newcommand{\klalm}{\textup{\textsc{klalm}}}
\newcommand{\sklalm}{\textup{\textsc{S-klalm}}}
\newcommand{\eralm}{\textup{\textsc{eralm}}}
\newcommand{\seralm}{\textup{\textsc{S-eralm}}}
\newcommand{\sklalmgr}{\textup{\textsc{S-klalm-cmg}}}
\newcommand{\labeltarget}[1]{\Hy@raisedlink{\hypertarget{#1}{}}}
\newcommand{\tabincell}[2]{\begin{tabular}{@{}#1@{}}#2\end{tabular}}
\DeclareMathOperator*{\minimize}{minimize}
\DeclareMathOperator*{\argmin}{arg\,min}
\newtheorem{assumption}{Assumption}
\newtheorem{corollary}{Corollary}
\newtheorem{definition}{Definition}
\newtheorem{lemma}{Lemma}
\newtheorem{remark}{Remark}
\newtheorem{theorem}{Theorem}
\numberwithin{equation}{section}
\title{Sampling-Based Methods for Multi-Block Optimization Problems over Transport Polytopes\thanks{{\bf Funding:} The work of the first author was supported by the National Key R\&D Program of China (2020YFA0711900, 2020YFA0711904). The work of the second author was supported by the Outstanding Innovative Talents Cultivation Funded Programs 2021 of Renmin University of China. The work of the third author was supported in part by the National Natural Science Foundation of China (12125108, 11971466, 12226008, 11991021, 11991020, 12021001, 12288201), Key Research Program of Frontier Sciences, Chinese Academy of Sciences (ZDBS-LY-7022), and CAS-Croucher Funding Scheme for Joint Laboratories ``CAS AMSS-PolyU Joint Laboratory of Applied Mathematics: Nonlinear Optimization Theory, Algorithms and Applications''. The work of the fourth author was supported by Beijing Municipal Natural Science Foundation (1232019), the National Natural Science Foundation of China (12101606), and Renmin University of China research fund program for young scholars.}}
\date{}
\author{
    Yukuan Hu\thanks{State Key Laboratory of Scientific and Engineering Computing, Academy of Mathematics and Systems Science, Chinese Academy of Sciences, and University of Chinese Academy of Sciences, Beijing, China (\href{mailto:ykhu@lsec.cc.ac.cn}{ykhu@lsec.cc.ac.cn}, \href{mailto:liuxin@lsec.cc.ac.cn}{liuxin@lsec.cc.ac.cn}).}
    \and Mengyu Li\thanks{Institute of Statistics and Big Data, Renmin University of China, Beijing, China (\href{mailto:limengyu516@ruc.edu.cn}{limengyu516@ruc.edu.cn}).}
    \and Xin Liu\footnotemark[2]
    \and Cheng Meng\thanks{Center for Applied Statistics, Institute of Statistics and Big Data, Renmin University of China, Beijing, China (\href{mailto:chengmeng@ruc.edu.cn}{chengmeng@ruc.edu.cn}).}
}
\begin{document}
	
	\maketitle

    \begin{abstract}
        This paper focuses on multi-block optimization problems over transport polytopes, which underlie various applications including strongly correlated quantum physics and machine learning. Conventional block coordinate descent-type methods for the general multi-block problems store and operate on the matrix variables directly, resulting in formidable expenditure for large-scale settings. On the other hand, optimal transport problems, as a special case, have attracted extensive attention and numerical techniques that waive the use of the full matrices have recently emerged. However, it remains nontrivial to apply these techniques to the multi-block, possibly nonconvex problems with theoretical guarantees. In this work, we leverage the benefits of both sides and develop novel sampling-based block coordinate descent-type methods, which are equipped with either entropy regularization or Kullback-Leibler divergence. Each iteration of these methods solves subproblems restricted on the sampled degrees of freedom. Consequently, they involve only sparse matrices, which amounts to considerable complexity reductions. We explicitly characterize the sampling-induced errors and establish convergence and asymptotic properties for the methods equipped with the entropy regularization. Numerical experiments on typical strongly correlated electron systems corroborate their superior scalability over the methods utilizing full matrices. The advantage also enables the first visualization of approximate optimal transport maps between electron positions in three-dimensional contexts.
    \end{abstract}
	
	\section{Introduction}
	
	In this work, we consider multi-block optimization problems over transport polytopes as follows:
	\begin{equation}
		\begin{array}{rcl}
			\minimize & (\min) & f(X_1,\ldots,X_N),\\
			\text{subject to} & (\st) & X_i\in\calU(\mba_i,\mbb_i),~i=1,\ldots,N,
		\end{array}
		\label{eqn:multi-block OTP}
	\end{equation}
	where, for any $i\in\{1,\ldots,N\}$ ($1\le N\in\N$), $\mba_i\in\R_+^{m_i}$, $\mbb_i\in\R_+^{n_i}$,
	$$\calU(\mba_i,\mbb_i):=\lrbrace{T\in\R_+^{m_i\times n_i}\mid T\one_{n_i}=\mba_i,~T^\T\one_{m_i}=\mbb_i}$$
	is called the transport polytope. The notation ``$\one_{n}$'' refers to the all-ones vector in $\R^n$. The unknown matrix variables are $X_i\in\R^{m_i\times n_i}$ ($i=1,\ldots,N$). The objective function $f:\bigtimes_{i=1}^N\R^{m_i\times n_i}\to\R$ is assumed to be block Lipschitz smooth over $\bigtimes_{i=1}^N\calU(\mba_i,\mbb_i)$ (for the definition see section \ref{sec:convergence analyses}), possibly nonconvex. Problem \eqref{eqn:multi-block OTP} finds its applications in several fields. For example, in quantum physics, it provides a promising route for treating the elusive strongly correlated electron systems (e.g., transition metal oxides \cite{dagotto2005complexity}), describing the electron-electron correlation explicitly \cite{chen2014numerical,hu2023global}. It can also act as a subproblem in finding the Wasserstein barycenter among several discrete probability distributions \cite{carlier2015numerical}, which has gained popularity so far in statistics \cite{bigot2012consistent} and machine learning \cite{cuturi2014fast}, as well as in label distribution learning \cite{zhao2018label}, which reflects the relative importance of different labels in supervised learning \cite{geng2016label}. 
	
	\par For solving general multi-block optimization problems, block coordinate descent (\bcd)-type methods rank among the top choices. These methods fully exploit the separability of the feasible region, in that each subproblem involves only one variable block and is much easier to solve than the original problem. Representatives of the \bcd-type methods are the block coordinate descent (\bcd) methods \cite{bertsekas2016nonlinear,fercoq2016optimization,wright2015coordinate}, block conditional gradient (\bcg) methods \cite{beck2015cyclic,braun2022conditional}, proximal alternating linearized minimization (\palm) methods \cite{bolte2014proximal,hu2023convergence}, as well as their stochastic versions \cite{beck2015cyclic,chen2019extended,driggs2021stochastic,hertrich2022inertial,lacoste2013block,sun2020efficiency}, where randomness is introduced to the gradient calculations or update order. Nevertheless, for the specific problem \eqref{eqn:multi-block OTP}, all the existing \bcd-type methods store and operate on the matrix variables directly, requiring at least quadratically growing complexities per iteration. This forms formidable memory and computation burdens when $\{m_i\}_{i=1}^N$ and/or $\{n_i\}_{i=1}^N$ are of large magnitude. Taking the aforementioned quantum physics application \cite{hu2023global} for instance, $m_i$ ($=n_i$) stands for the number of grid points and can be of order $10^4$ or $10^5$ even for crude discretization. 
	
	\par When $N$ equals one ($m_i=m$, $n_i=n$) and $f$ is affine, problem \eqref{eqn:multi-block OTP} reduces to the Kantorovich formulation of the classical optimal transport (OT) problem \cite{villani2003topics}. The exploration of this problem dates back to Monge's pioneering work in the 18th century \cite{monge1781memoire}, followed by Kantorovich's relaxation in the 20th century \cite{kantorovich1942transfer}. Since then, a plethora of numerical methods for solving OT problems have been constantly emerging. Traditional ones solve differential equations \cite{benamou2002monge,brenier1997homogenized} or turn to linear programming solvers \cite{pele2009fast,rubner1997earth}, resulting in unacceptable cubic complexities. Nowadays, the widest usage may go to the entropy regularization-based methods \cite{cuturi2013sinkhorn,peyre2019computational}, which allow for the approximations of solutions in $\calO(t_{\max}mn)$ scaling time with the Sinkhorn algorithm \cite{sinkhorn1967concerning}, where $t_{\max}$ is the number of iterations; see section \ref{subsec:regularized ot} for more discussions. In recent years, motivated by the need in large-scale contexts, there have been works dedicated to alleviating the per-iteration quadratic costs by the conventional Sinkhorn algorithm; for example, the low-rank approximation-based \cite{altschuler2019massively} and the entrywise sampling-based \cite{li2023importance} variants of the Sinkhorn algorithm. Remarkably, the latter variant essentially deals with a \textit{restricted} OT problem:
	\begin{equation}
		\min_{X}~~\inner{\hat C,X},~~\st~X\in\calU(\mba,\mbb),~X_{\calI^c}=0,
		\label{eqn:restricted OT}
	\end{equation}
	where $\hat C\in\R^{m\times n}$ is an effective cost matrix (defined later), $X\in\R^{m\times n}$, $\mba\in\R^m$, $\mbb\in\R^n$, $\calI\subseteq\{(j,k)\mid j=1,\ldots,m,~k=1,\ldots,n\}$ contains the indices sampled from the beginning according to some probability distribution related to $\mba$ and $\mbb$, and $\calI^c$ denotes its complementary set. The constraint ``$X_{\calI^c}=0$'' enforces the entries in $X$ indexed by $\calI^c$ to be zero, which distinguishes the algorithm from the well-known stochastic optimization methods. As a result, only $\abs{\calI}$ entries in $X$ get involved in the calculations and updates, leading to a nice scaling when $\abs{\calI}=o(mn)$. However, it remains unclear whether the sampling technique can be adapted to handle the multi-block, possibly nonconvex problem \eqref{eqn:multi-block OTP}, while maintaining favorable convergence properties. One possible way is to integrate the sampling technique into the \bcd-type methods and to solve restricted subproblems like \eqref{eqn:restricted OT} in each iteration. Analyzing the accumulation of errors induced by sampling will then become subtle.
	
	\subsection{Contributions and organization.} 
	We develop in this paper sampling-based \bcd-type methods for problem \eqref{eqn:multi-block OTP}, which are equipped with either entropy regularization or Kullback-Leibler divergence. In particular, importance samplings are performed conforming to the probability distributions associated with the \textit{previous iterates} and each iteration solves subproblems restricted over sampled supports. Consequently, only $o(m_in_i)$ entries in $X_i$ ($i=1,\ldots,N$) take part in the updates and derivatives calculations, which amounts to considerable computational saving in large-scale contexts. 
	
	\par Following the theoretical results about randomized matrix sparsification, we analyze the convergence and asymptotic properties for the methods equipped with entropy regularization. We explicitly characterize the sampling-induced errors and establish upper bounds for the average stationarity violations. The average violation is further shown to vanish in the limit $\sum_{i=1}^N(m_i+n_i)\to+\infty$ (with probability going to 1). Notably, to the best of our knowledge, our work is the first attempt in applying the matrix entrywise sampling technique to multi-block nonconvex settings with theoretical guarantees.
	
	\par We demonstrate the efficiency of the newly designed methods via numerical simulations of typical strongly correlated electron systems. 
	Their better scalability enables the first visualization of the approximate OT maps between electron positions in three-dimensional contexts.
	
	\par The paper is organized as follows. We provide preliminaries in section \ref{sec:notation and preliminary} and elaborate on the algorithmic developments in section \ref{sec:algorithmic development}. Section \ref{sec:convergence analyses} contains our theoretical results, whose proofs are deferred to the appendix. Numerical experiments and results are described in section \ref{sec:numerical experiments}. Finally, we conclude in section \ref{sec:conclusion}.

	\section{Preliminaries}\label{sec:notation and preliminary}
	
	This section offers some preliminaries, including notations, tools from OT, and bibliographical notes on entrywise matrix sparsification. 
	
	\subsection{Notations}
	
	This paper presents scalars, vectors, and matrices by regular-font, bold lower-case, and upper-case letters, respectively. We denote the rounding down operation by ``$\lfloor\cdot\rfloor$''. The notation ``$\one_n$'' stands for the all-ones vector in $\R^n$. The notations ``$\inner{\cdot,\cdot}$'' and ``$\snorm{\cdot}$'' calculate, respectively, the standard inner product and norm of vectors or matrices in the ambient Euclidean space. We use ``$\snorm{\cdot}_2$'' particularly for the 2-norm of matrices. The notation ``$\kappa(\cdot)$'' refers to the spectral condition number of a matrix. We use ``$\Diag(\cdot)$'' to form a diagonal matrix with the input vector. We denote the entries or sub-blocks of vectors by single subscripts (e.g., $\varrho_k$ or $\mba_i$), the sub-blocks of matrices by single subscripts (e.g., $X_i$), and the entries of matrices by double subscripts (e.g., $x_{i,jk}$). A matrix with a set subscript refers to the entries indexed by the set (e.g., $X_{\calI}$). Sometimes, we make abbreviations for the aggregation of the sub-blocks of a matrix (e.g., $X_{\le i}:=(X_1,\ldots,X_i)$, $X_{>i}:=(X_{i+1},\ldots,X_N)$). These abbreviations become null if the index sets in the subscripts are empty. 
	The entrywise product and division of two vectors or matrices are denoted by ``$\odot$'' and ``$\oslash$'', respectively. Univariate functions, such as ``$\exp(\cdot)$'' and ``$\log(\cdot)$'', are extended to vectors and matrices as entrywise operations. 
	
	\par For a multivariate function $g$, $\nabla g$ is the gradient of $g$ at the points where $g$ is differentiable. We add a subscript to indicate the block with respect to which the derivative is taken (e.g., $\nabla_ig$). 
	
	\par We use $\R_{++}$ to denote the set of positive real numbers. Given a set, its measure or cardinality is represented using ``$\abs{\cdot}$''. We use ``$\bigtimes$'' or ``$\times$'' to refer to the Cartesian product of sets or spaces (e.g., $\bigtimes_{i=1}^N\R^{m_i\times n_i}$ or $\R^{m_i}\times\R^{n_i}$), use exponents to represent the Cartesian product of identical sets or spaces (e.g., $(\R^{K\times K})^{N_e-1}$). The complementary set is noted with a superscript ``$c$'' (e.g., $\calI^c$). 
	
	\par When describing algorithms, we use superscripts within brackets to refer to the iteration numbers (e.g., $X_i^{(t)}$). 
	
	\subsection{Negative entropy and Kullback-Leibler divergence}\label{subsec:entropy and KL}
	
	\begin{definition}[\cite{kullback1951information,shannon1948mathematical}]\label{def:entropy and KL divergence}
		For any $T=(t_{ij})\in\R_+^{m\times n}$, its negative entropy is defined as $h(T):=\sum_{ij}t_{ij}(\log t_{ij}-1)$. Given any $T=(t_{ij})$, $T'=(t_{ij}')\in\R_+^{m\times n}$, the Kullback-Leibler (KL) divergence between $T$ and $T'$ is defined as
		\begin{equation}
			\KL(T;T'):=\sum_{i,j}\lrsquare{t_{ij}(\log t_{ij}-\log t_{ij}')-(t_{ij}-t_{ij}')}.
			\label{eqn:KL divergence}
		\end{equation}
		If $t_{ij}>0$ and $t_{ij}'=0$ for some pair $(i,j)$, then $\KL(T;T')=+\infty$.
	\end{definition}
	
	
	\par The negative entropy has been adopted in thermodynamics as a measure of disorder in a system, or a measure of uncertainty in information theory. The KL divergence can be treated as the Bregman distance \cite{bregman1967relaxation} associated with the negative entropy; it has been used as a measure of the disparity between probability distributions \cite{peyre2019computational}. 
	
	\subsection{Entropy regularized optimal transport}\label{subsec:regularized ot}
	
	\par The Kantorovich formulation of the discrete OT problem is in general
	\begin{equation}
		\min_{T}~\inner{W, T},~~\st~T\in\calU(\mb{p},\mb{q}),
		\label{eqn:ot}
	\end{equation}
	where $T=(t_{ij})\in\R^{m\times n}$ is the transport plan, $W=(w_{ij})\in\R^{m\times n}$ is the cost matrix, and $\mb{p}\in\R^m$, $\mb{q}\in\R^n$ are discrete probability distributions. The solution to problem \eqref{eqn:ot} is referred to as the OT plan, which achieves minimal transportation efforts. Nowadays, OT has attracted extensive attention from applications (e.g., \cite{arjovsky2017wasserstein,ma2019optimal,meng2019large,xu2019learning}).
	
	\par The computational complexity of directly solving problem \eqref{eqn:ot} as a linear programming usually grows cubically as $m$ and $n$ increase, which severely hinders the wide applications of OT. To approximate the solution efficiently within certain tolerance, the author of \cite{cuturi2013sinkhorn} adds a (negative) entropy regularization penalty, obtaining
	\begin{equation}
		\min_{T}~\inner{W, T} + \lambda h(T),~~\st~T\one_n=\mb{p},~T^\T\one_m=\mb{q},
		\label{eqn:rot}
	\end{equation}
	where $\lambda>0$ is the regularization parameter. Note that the nonnegativity requirement is unnecessary due to the definition of $h$.
	
	\par On account of the entropy regularizer, problem \eqref{eqn:rot} becomes strongly convex and thus admits a unique optimal solution. Furthermore, as $\lambda\to0$, this optimal solution converges to an optimal solution of problem \eqref{eqn:ot} \cite{peyre2019computational}. 
	Computationally, the dual of problem \eqref{eqn:rot} can be solved by an alternating minimization scheme, known as the Sinkhorn algorithm in the OT community \cite{sinkhorn1967concerning}. The alternating scheme involves only matrix-vector multiplications and entrywise divisions between vectors (see section \ref{subsec:eralm}), 
	particularly suited for GPU executions \cite{cuturi2013sinkhorn}. 
	
	
	\subsection{Entrywise matrix sparsification}\label{subsec:importance sampling}
	
	\par Entrywise matrix sparsification is pioneered by Achlioptas and McSherry \cite{achlioptas2007fast}, later developed in \cite{braverman2021near,drineas2011note,kundu2017recovering}, where sampling-based algorithms are described to select a small number of entries from an input matrix to construct a sparse sketch; the sketch is close to the original one in the operator norm with a high probability guarantee. The entries are sampled following some probability distribution associated with the original matrix. Among others, importance sampling, as a statistical technique, constructs the sampling probability distribution in the spirit of variance reduction \cite{elvira2021advances,liu1996metropolized,liu2008monte,owen2013monte}. Owing to the substantial computational complexity reduction thereby, entrywise matrix sparsification has been adopted in various scenarios, e.g., for computing approximate eigenvectors \cite{achlioptas2007fast}, solving OT problems \cite{li2023importance}, and computing Gromov-Wasserstein distances \cite{li2023efficient}.  
	
	
	\section{Algorithmic developments}\label{sec:algorithmic development}
	
	
	\par In this part, we develop two classes of methods for the multi-block problem over the transport polytopes \eqref{eqn:multi-block OTP}, with the \bcg~and \palm~methods as starting points. In particular, we add entropy regularizers to the subproblems in the \bcg~method, while replacing the proximal term with the KL divergence in the \palm~method. We further equip them with importance sampling-based entrywise matrix sparsification.
	
	\subsection{Entropy regularized alternating linearized minimization}\label{subsec:eralm}
	
	\par In each iteration, the \bcg~method obtains search directions via solving subproblems as
	\begin{equation}
		\min_{X_i}~\inner{C_i^{(t)},X_i-X_i^{(t)}},~\st~X_i\in\calU(\mba_i,\mbb_i),
		\label{eqn:subprob in bcg}
	\end{equation}
	where 
	\begin{equation}
		C_i^{(t)}:=\nabla_if\big(X_{<i}^{(t+1)},X_{\ge i}^{(t)}\big)\in\R^{m_i\times n_i}.
		\label{eqn:subprob cost matrix full}
	\end{equation} 
	Subproblem \eqref{eqn:subprob in bcg} can be identified as an OT problem with $C_i^{(t)}$ being the cost matrix. As mentioned in section \ref{subsec:regularized ot}, solving subproblem \eqref{eqn:subprob in bcg} entails cubic complexities using linear programming methods, which forms an unacceptable computational burden. To this end, 
	we add an entropy regularization and instead resort to 
	\begin{equation}
		\min_{X_i}~\inner{C_i^{(t)},X_i-X_i^{(t)}}+\lambda_i^{(t)}h(X_i),~\st~X_i\one_{n_i}=\mba_i,~X_i^\T\one_{m_i}=\mbb_i
		\label{eqn:subprob in eralm}
	\end{equation}
	in each iteration, where $\lambda_i^{(t)}>0$ is the regularization parameter and $h$ is the negative entropy in Definition \ref{def:entropy and KL divergence}. Consequently, we obtain the entropy regularized alternating linearized minimization (\eralm) method; see Algorithm \ref{alg:eralm}. 
	\begin{algorithm}[!t]
		\caption{The \eralm~method for solving problem \eqref{eqn:multi-block OTP}.}
		\label{alg:eralm}
		\begin{algorithmic}[1]
			\REQUIRE{$X_i^{(0)}\in\R^{m_i\times n_i}$, $\mba_i\in\R^{m_i}$, $\mbb_i\in\R^{n_i}$ ($i=1,\ldots,N$), $t_{\max}\in\N$.}
			\STATE{Set $t:=0$.}
			\WHILE{\textit{certain conditions are not satisfied} \textbf{and} $t<t_{\max}$}
			\FOR{$i=1,\ldots,N$}
			\STATE{Select a regularization parameter $\lambda_i^{(t)}>0$ and a step size $\alpha_i^{(t)}\in(0,1]$.}
			\STATE{Compute $C_i^{(t)}\in\R^{m_i\times n_i}$ as in the formula \eqref{eqn:subprob cost matrix full}.}
			\STATE{Solve subproblem \eqref{eqn:subprob in eralm} or \eqref{eqn:dual of eralm} to obtain $\tilde X_i^{(t+1)}\in\R^{m_i\times n_i}$.}
			\STATE{Update $X_i^{(t+1)}:=(1-\alpha_i^{(t)})X_i^{(t)}+\alpha_i^{(t)}\tilde X_i^{(t+1)}\in\R^{m_i\times n_i}$.}
			\ENDFOR
			\STATE{Set $t:=t+1$.}
			\ENDWHILE
			\ENSURE{Approximate solution $(X_1^{(t)},\ldots,X_N^{(t)})\in\bigtimes_{i=1}^N\R^{m_i\times n_i}$.}
		\end{algorithmic}
	\end{algorithm}
	
	\par 
	Observe that the number of variables in subproblem \eqref{eqn:subprob in eralm} scales quadratically, while the number of equality constraints grows linearly. Therefore, it is more advantageous to work from the dual perspective, especially when $m_i$ and/or $n_i$ is of large magnitude. From \cite{peyre2019computational}, the dual of subproblem \eqref{eqn:subprob in eralm} is
	\begin{equation}
		\min_{\tilde\uu_i,\tilde\vvv_i}~q_i(\tilde\uu_i,\tilde\vvv_i;\lambda_i^{(t)},\Psi_i^{(t)}):=\lambda_i^{(t)}\exp\lrbracket{\frac{\tilde\uu_i}{\lambda_i^{(t)}}}^\T\Psi_i^{(t)}\exp\lrbracket{\frac{\tilde\vvv_i}{\lambda_i^{(t)}}}-\tilde\uu_i^\T\mba_i-\tilde\vvv_i^\T\mbb_i,
		\label{eqn:dual of eralm}
	\end{equation}
	where $\tilde\uu_i\in\R^{m_i}$, $\tilde\vvv_i\in\R^{n_i}$ are the dual variables associated with the equality constraints, and
	\begin{equation}
		\Psi_i^{(t)}:=\exp\lrbracket{-C_i^{(t)}/\lambda_i^{(t)}}\in\R^{m_i\times n_i}
		\label{eqn:kernel of eralm}
	\end{equation}
	is called the kernel matrix. To tackle the block-structured problem \eqref{eqn:dual of eralm}, one natural choice is the \bcd~method, which is also known as the \textit{Sinkhorn algorithm} \cite{sinkhorn1967concerning} in the context of OT. Starting from a given $\tilde\vvv_i^{(t,0)}\in\R^{n_i}$, the Sinkhorn algorithm repeats the following two steps until fulfilling certain criteria:
	\begin{align*}
		\tilde\uu_i^{(t,s+1)}:&=\lambda_i^{(t)}\log\lrbracket{\mba_i\oslash\lrbracket{\Psi_i^{(t)}\exp\big(\tilde\vvv_i^{(t,s)}/\lambda_i^{(t)}\big)}},\\
		\tilde\vvv_i^{(t,s+1)}:&=\lambda_i^{(t)}\log\lrbracket{\mbb_i\oslash\lrbracket{\Psi_i^{(t)\T}\exp\big(\tilde\uu_i^{(t,s+1)}/\lambda_i^{(t)}\big)}},
	\end{align*}
	where $s$ indicates the subiteration number. After letting $\check{\uu}_i^{(t,s)}:=\exp\big(\tilde\uu_i^{(t,s)}/\lambda_i^{(t)}\big)\in\R^{m_i}$ and $\check{\vvv}_i^{(t,s)}:=\exp\big(\tilde\vvv_i^{(t,s)}/\lambda_i^{(t)}\big)\in\R^{n_i}$, the above schemes can be rewritten as
	\begin{equation}
		\check{\uu}_i^{(t,s+1)}:=\mba_i\oslash\lrbracket{\Psi_i^{(t)}\check{\vvv}_i^{(t,s)}},~~\check{\vvv}_i^{(t,s+1)}:=\mbb_i\oslash\lrbracket{\Psi_i^{(t)\T}\check{\uu}_i^{(t,s+1)}},
		\label{eqn:sinkhorn}
	\end{equation}
	involving matrix-vector multiplications and entrywise divisions between vectors, thus favoring high parallel scalability \cite{cuturi2013sinkhorn}. The linear convergence rate of the Sinkhorn algorithm has been established \cite{luo1993convergence}. Further acceleration can be gained via warm starts \cite{xie2020fast}.
	
	
	\subsection{Sampling-based variant of the {\sc eralm} method}
	
	\par The \eralm~method still works on matrix variables and requires calculating $C_i^{(t)}$ \eqref{eqn:kernel of eralm} explicitly. Below, we use a sparse matrix $\hat\Psi_i^{(t)}\in\R^{m_i\times n_i}$ to approximate $\Psi_i^{(t)}=(\psi_{i,jk}^{(t)})$, rendering most of the computational costs dispensable. 
	
	\par The idea of the sparse approximation largely originates from the following multiplicative expression for the unique optimal solution of subproblem \eqref{eqn:subprob in eralm}:
	\begin{equation}
		\tilde X_i^{(t+1,\star)}:=\Diag\lrbracket{\exp\lrbracket{\frac{\tilde\uu_i^{(t,\star)}}{\lambda_i^{(t)}}}}\Psi_i^{(t)}\Diag\lrbracket{\exp\lrbracket{\frac{\tilde\vvv_i^{(t,\star)}}{\lambda_i^{(t)}}}}\in\R^{m_i\times n_i},
		\label{eqn:optimal solution eralm subproblem}
	\end{equation}
	where $\big(\tilde\uu_i^{(t,\star)},\tilde\vvv_i^{(t,\star)}\big)\in\R^{m_i}\times\R^{n_i}$ is an optimal solution of the dual \eqref{eqn:dual of eralm}. The expression \eqref{eqn:optimal solution eralm subproblem} indicates that $\tilde x_{i,jk}^{(t+1,\star)}=0$ whenever $\psi_{i,jk}^{(t)}=0$. For another, it has been theoretically established in \cite{hosseini2022intrinsic} that the solutions of subproblem \eqref{eqn:subprob in bcg} can be sparse. When $f$ is multi-affine, there even exist sparse solutions with $\calO(\sum_{i=1}^N(m_i+n_i))$ nonzero entries to problem \eqref{eqn:multi-block OTP} \cite{hu2023exactness,hu2023global}. These two points together motivate us to compute only a small portion of the entries in $\Psi_i^{(t)}$ to eliminate most of the storage and computation overhead. 
	
	\par For this purpose, it suffices to estimate the sparsity pattern along iterations. In this work, we make an attempt through entrywise sampling \cite{achlioptas2013near,braverman2021near,drineas2011note,kundu2017recovering,li2023efficient}; that is, randomly pick a small portion from $\{(j,k):j=1,\ldots,m_i,~k=1,\ldots,n_i\}$ according to certain sampling probability distribution. Specifically, the entrywise sampling is implemented via Poisson sampling framework following the recent works \cite{braverman2021near, ai2021optimal, li2023importance}, which independently evaluate each index for inclusion in the sampled set. Compared to the sampling with replacement, the Poisson sampling has been shown to provide higher accuracy in some scenarios and is more practical for distributed systems \cite{wang2022sampling}.
	
	\par In light of the importance sampling (see section \ref{subsec:importance sampling}), the optimal sampling probabilities should be $p_{i,jk}^{(t,\star)}\propto \tilde x_{i,jk}^{(t+1,\star)}$ \cite{li2023efficient}. However, $\tilde X_i^{(t+1,\star)}$ is completely unknown without the knowledge of $\tilde\uu_i^{(t,\star)}$ and $\tilde\vvv_i^{(t,\star)}$ (see the formula \eqref{eqn:optimal solution eralm subproblem}). An alternative is to sample conforming to the values in the previous iteration, i.e., $p_{i,jk}^{(t)\prime}\propto x_{i,jk}^{(t)}$. This becomes reasonable when the procedure gets close to the optimum. In addition, to recover entries $(j,k)$ from the optimal sparsity pattern that are missing in $X_i^{(t)}$, in the sense that $x_{i,jk}^{(t)}=0$, we linearly interpolate between $p_{i,jk}^{(t)\prime}$ and the sampling probability proposed by \cite{li2023efficient}, i.e., $p_{i,jk}^{\prime\prime}\propto \sqrt{a_{i,j} b_{i,k}}$. Specifically, 
	\begin{equation}
		p_{i,jk}^{(t)}:=\gamma p_{i,jk}^{(t)\prime}+(1-\gamma)p_{i,jk}^{\prime\prime} = \gamma\frac{x_{i,jk}^{(t)}}{\sum_{j',k'}x_{i,j'k'}^{(t)}}+(1-\gamma)\frac{\sqrt{a_{i,j} b_{i,k}}}{\sum_{j',k'} \sqrt{a_{i,j'} b_{i,k'}}}
		\label{eqn:subsampling probability}
	\end{equation}
	for $j=1,\ldots,m_i$, $k=1,\ldots,n_i$. Here, $\gamma\in[0,1]$ stands for the interpolation factor. 
	Such a shrinkage strategy is widely adopted in the subsampling literature \cite{ma2014statistical,yu2022optimal}.

	\par Given a sampling parameter $n_{s,i}\in\N$, the sparse approximation $\hat\Psi_i^{(t)}=(\hat{\psi}_{i,jk}^{(t)})\in\R^{m_i\times n_i}$ for the kernel matrix $\Psi_i^{(t)}$ is constructed in accordance with the Poisson sampling principle, i.e.,
	\begin{equation}
		\hat{\psi}_{i,jk}^{(t)}:=\left\{\begin{array}{ll}
			\psi_{i,jk}^{(t)}/p_{i,jk}^{(t)\ast}, & \text{with probability}~p_{i,jk}^{(t)\ast} := \min\lrbrace{1, n_{s,i}\cdot p_{i,jk}^{(t)}},\\
			0, & \text{otherwise},
		\end{array}\right. 
		\label{eqn:sampled kernel}
	\end{equation}
	where the denominators ensure the unbiasedness of the random approximation (see Appendix \ref{appsec:convergence seralm}). We denote the sampled set of indices as $\calI_i^{(t)}\subseteq\{(j,k):j=1,\ldots,m_i,~k=1,\ldots,n_i\}$. Note that $\E(|\calI_i^{(t)}|) = \sum_{j,k} p_{i,jk}^{(t)\ast} \le n_{s,i} \sum_{j,k} p_{i,jk}^{(t)} = n_{s,i}$, which indicates that $n_{s,i}$ is an upper bound for the expected number of nonzero entries in $\hat\Psi_i^{(t)}$. Although $|\calI_i^{(t)}|$ fluctuates, it still concentrates near its expectation with high probability \cite{ai2021optimal}, ensuring that the computational cost remains manageable.
	
	\par Replacing $\Psi_i^{(t)}$ in subproblem \eqref{eqn:dual of eralm} with $\hat\Psi_i^{(t)}$, we obtain
	\begin{equation}
		\min_{\tilde\uu_i,\tilde\vvv_i}~q_i(\tilde\uu_i,\tilde\vvv_i;\lambda_i^{(t)},\hat\Psi_i^{(t)}),
		\label{eqn:dual of seralm}
	\end{equation}
	which is the dual of\footnote{The strong duality holds if subproblem \eqref{eqn:subprob of seralm} is feasible with $\calI_i^{(t)}$. The same will be true later, for subproblem \eqref{eqn:subprob of sklalm}.} 
	\begin{equation}
		\begin{array}{cl}
			\min\limits_{X_i} & \inner{\hat C_i^{(t)},X_i-X_i^{(t)}}+\lambda_i^{(t)}h(X_i),\\
			\st & X_i\one_{n_i}=\mba_i,~X_i^\T\one_{m_i}=\mbb_i,~(X_i)_{(\calI_i^{(t)})^c}=0.
		\end{array}
		\label{eqn:subprob of seralm}
	\end{equation}
	Here, $\hat C_i^{(t)}=(\hat c_{i,jk}^{(t)})\in\R^{m_i\times n_i}$ is the \textit{effective cost matrix}, defined as
	\begin{equation}
		\hat c_{i,jk}^{(t)}:=\left\{\begin{array}{ll}
			c_{i,jk}^{(t)}+\lambda_i^{(t)}\log\lrbracket{p_{i,jk}^{(t)\ast}}, & \text{if}~(j,k)\in\calI_i^{(t)},\\
			c_{i,jk}^{(t)}, & \text{otherwise}.
		\end{array}\right.
		\label{eqn:subprob cost matrix sampling}
	\end{equation}
	Note that in $\hat C_i^{(t)}$ only the entries indexed by $\calI_i^{(t)}$ are required to form $\hat\Psi_i^{(t)}$. To solve subproblem \eqref{eqn:dual of seralm} and \eqref{eqn:subprob of seralm}, it suffices to substitute $\Psi_i^{(t)}$ with $\hat\Psi_i^{(t)}$ in the Sinkhorn algorithm \eqref{eqn:sinkhorn}.
	
	\par We call the \eralm~method equipped with sampling the \seralm~method, which is summarized in Algorithm \ref{alg:seralm}. The maximum expected numbers of sampled indices are controlled by $\{n_{s,i}\}_{i=1}^N\subseteq\N$. 
	\begin{algorithm}[!t]
		\caption{The \seralm~method for solving problem \eqref{eqn:multi-block OTP}.}
		\label{alg:seralm}
		\begin{algorithmic}[1]
			\REQUIRE{$X_i^{(0)}\in\R^{m_i\times n_i}$, $\mba_i\in\R^{m_i}$, $\mbb_i\in\R^{n_i}$ ($i=1,\ldots,N$), $\gamma\in[0,1]$, $\{n_{s,i}\}_{i=1}^N\subseteq\N$, $t_{\max}\in\N$.}
			\STATE{Set $t:=0$.}
			\WHILE{\textit{certain conditions are not satisfied} \textbf{and} $t<t_{\max}$}
			\FOR{$i=1,\ldots,N$}
			\STATE{Select a regularization parameter $\lambda_i^{(t)}>0$ and a step size $\alpha_i^{(t)}\in(0,1]$.}
			\STATE{Randomly pick a subset $\calI_i^{(t)}\subseteq\{(j,k):j=1,\ldots,m_i,~k=1,\ldots,n_i\}$ following the Poisson sampling with $P_i^{(t)}=(p_{i,jk}^{(t)})\in\R_+^{m_i\times n_i}$ given by \eqref{eqn:subsampling probability} and $n_{s,i}$.}
			\STATE{Construct $\hat\Psi_i^{(t)}=(\hat{\psi}_{i,jk}^{(t)})\in\R^{m_i\times n_i}$ as in the formula \eqref{eqn:sampled kernel}.}
			\STATE{Solve subproblem \eqref{eqn:dual of seralm} or \eqref{eqn:subprob of seralm} to obtain $\tilde X_i^{(t+1)}\in\R^{m_i\times n_i}$.}
			\STATE{Update $X_i^{(t+1)}:=(1-\alpha_i^{(t)})X_i^{(t)}+\alpha_i^{(t)}\tilde X_i^{(t+1)}\in\R^{m_i\times n_i}$.}
			\ENDFOR
			\STATE{Set $t:=t+1$.}
			\ENDWHILE
			\ENSURE{Approximate solution $(X_1^{(t)},\ldots,X_N^{(t)})\in\bigtimes_{i=1}^N\R^{m_i\times n_i}$.}
		\end{algorithmic}
	\end{algorithm}
	
	\subsection{KL divergence-based alternating linearized minimization}\label{subsec:klalm}
	
	\par We now turn to methods involving a direct penalization of the distance between $X_i$ and $X_i^{(t)}$. In each iteration, the \palm~method solves several proximal linearized subproblems of the form
	\begin{equation}
		\min_{X_i}~\inner{C_i^{(t)},X_i-X_i^{(t)}}+\frac{\mu_i^{(t)}}{2}\snorm{X_i-X_i^{(t)}}^2,~\st~X_i\in\calU(\mba_i,\mbb_i),
		\label{eqn:subprob of palm}
	\end{equation}
	where $\mu_i^{(t)}>0$ refers to the proximal parameter and $C_i^{(t)}$ is defined in equation \eqref{eqn:subprob cost matrix full}. Solving subproblem \eqref{eqn:subprob of palm} is equivalent to projecting $X_i^{(t)}-C_i^{(t)}/\mu_i^{(t)}$ onto $\calU(\mba_i,\mbb_i)$. In this part, we replace the proximal term $\snorm{X_i-X_i^{(t)}}^2/2$ by the KL divergence $\KL(X_i;X_i^{(t)})$, which is the Bregman distance induced by the negative entropy (see section \ref{subsec:entropy and KL}), and obtain
	\begin{equation}
		\min_{X_i}~\inner{C_i^{(t)},X_i-X_i^{(t)}}+\mu_i^{(t)}\KL(X_i;X_i^{(t)}),~\st~X_i\one_{n_i}=\mba_i,~X_i^\T\one_{m_i}=\mbb_i.
		\label{eqn:subprob of klalm}
	\end{equation}
	This is motivated by the Bregman proximal point algorithms for convex optimization \cite{xie2020fast,yang2022bregman} and the Bregman distance-based \palm~methods for nonconvex optimization with relatively smooth objectives \cite{ahookhosh2021multi,li2019provable}. We describe the KL divergence-based alternating linearized minimization (\klalm) method in Algorithm \ref{alg:klalm}. 
	\begin{algorithm}[!t]
		\caption{The \klalm~method for solving problem \eqref{eqn:multi-block OTP}.}
		\label{alg:klalm}
		\begin{algorithmic}[1]
			\REQUIRE{$X_i^{(0)}\in\R^{m_i\times n_i}$, $\mba_i\in\R^{m_i}$, $\mbb_i\in\R^{n_i}$ ($i=1,\ldots,N$), $t_{\max}\in\N$.}
			\STATE{Set $t:=0$.}
			\WHILE{\textit{certain conditions are not satisfied} \textbf{and} $t<t_{\max}$}
			\FOR{$i=1,\ldots,N$}
			\STATE{Select a proximal parameter $\mu_i^{(t)}>0$.}
			\STATE{Compute $C_i^{(t)}\in\R^{m_i\times n_i}$ as in the formula \eqref{eqn:subprob cost matrix full}.}
			\STATE{Solve subproblem \eqref{eqn:subprob of klalm} or \eqref{eqn:dual of klalm} to obtain $X_i^{(t+1)}\in\R^{m_i\times n_i}$.}
			\ENDFOR
			\STATE{Set $t:=t+1$.}
			\ENDWHILE
			\ENSURE{Approximate solution $(X_1^{(t)},\ldots,X_N^{(t)})\in\bigtimes_{i=1}^N\R^{m_i\times n_i}$.}
		\end{algorithmic}
	\end{algorithm}
	
	\par 
	As in the previous subsection, we write out the dual of subproblem \eqref{eqn:subprob of klalm}:
	\begin{equation}
		\min_{\uu_i,\vvv_i}~q_i(\uu_i,\vvv_i;\mu_i^{(t)},\Phi_i^{(t)}),
		\label{eqn:dual of klalm}
	\end{equation}
	where $q_i$ is defined in problem \eqref{eqn:dual of eralm}, $\uu_i\in\R^{m_i}$, $\vvv_i\in\R^{n_i}$ are the dual variables associated with the equality constraints and the kernel matrix
	\begin{equation}
		\Phi_i^{(t)}:=\exp\lrbracket{-C_i^{(t)}/\mu_i^{(t)}}\odot X_i^{(t)}\in\R^{m_i\times n_i}.
		\label{eqn:kernel of klalm}
	\end{equation}
	The Sinkhorn algorithm \eqref{eqn:sinkhorn} thus still applies, yet with $\Psi_i^{(t)}$ changed to $\Phi_i^{(t)}$. 
	
	
	\subsection{Sampling-based variant of the {\sc klalm} method}
	\par Since subproblem \eqref{eqn:subprob of klalm} in the \klalm~method admits the following multiplicative expression for the unique optimal solution:
	\begin{equation}
		X_i^{(t+1,\star)}:=\Diag\lrbracket{\exp\lrbracket{\frac{\uu_i^{(t,\star)}}{\mu_i^{(t)}}}}\Phi_i^{(t)}\Diag\lrbracket{\exp\lrbracket{\frac{\vvv_i^{(t,\star)}}{\mu_i^{(t)}}}}\in\R^{m_i\times n_i},
		\label{eqn:optimal solution bpalm subproblem}
	\end{equation}
	where $\big(\uu_i^{(t,\star)},\vvv_i^{(t,\star)}\big)\in\R^{m_i}\times\R^{n_i}$ is an optimal solution of the dual \eqref{eqn:dual of klalm}, we could likewise employ sparse approximation on $\Phi_i^{(t)}$ for better scalability. We define the sparse approximation $\hat\Phi_i^{(t)}=(\hat{\varphi}_{i,jk}^{(t)})\in\R^{m_i\times n_i}$ for $\Phi_i^{(t)}=(\varphi_{i,jk}^{(t)})$ as
	\begin{equation}
		\hat{\varphi}_{i,jk}^{(t)}:=\left\{\begin{array}{ll}
			\varphi_{i,jk}^{(t)}/p_{i,jk}^{(t)\ast}, & \text{with probability}~p_{i,jk}^{(t)\ast},\\
			0, & \text{otherwise},
		\end{array}\right.
		\label{eqn:sampled kernel sklalm}
	\end{equation}
	where $p_{i,jk}^{(t)\ast}$ is given as in equation \eqref{eqn:sampled kernel}. Replacing $\Phi_i^{(t)}$ in subproblem \eqref{eqn:dual of klalm} with $\hat\Phi_i^{(t)}$, we obtain 
	\begin{equation}
		\min_{\uu_i,\vvv_i}~q_i(\uu_i,\vvv_i;\mu_i^{(t)},\hat\Phi_i^{(t)}),
		\label{eqn:dual of sklalm}
	\end{equation}
	which is the dual of
	\begin{equation}
		\begin{array}{cl}
			\min\limits_{X_i} & \inner{\hat C_i^{(t)},X_i-X_i^{(t)}}+\mu_i^{(t)}\KL(X_i;X_i^{(t)}),\\
			\st & X_i\one_{n_i}=\mba_i,~X_i^\T\one_{m_i}=\mbb_i,~(X_i)_{(\calI_i^{(t)})^c}=0.
		\end{array}
		\label{eqn:subprob of sklalm}
	\end{equation}
	The Sinkhorn algorithm \eqref{eqn:sinkhorn} then applies with $\Psi_i^{(t)}$ replaced by $\hat\Phi_i^{(t)}$.
	
	\par While everything seems to go smoothly, we shall point out some distinctions: samplings over iterations will not improve the supports to be optimized and can even result in infeasible subproblems. Recalling the definition \eqref{eqn:kernel of klalm} of $\Phi_i^{(t)}$, one knows from relation \eqref{eqn:optimal solution bpalm subproblem} that $x_{i,jk}^{(t)}=0$ implies $x_{i,jk}^{(t+1,\star)}=0$. If the subproblems are exactly solved and we perform samplings in two successive iterations (say, $t$ and $t+1$), the $i$th support to be optimized in the $(t+1)$th iteration is $\calI_i^{(t-1)}\cap\calI_i^{(t)}$, which is a subset of $\calI_i^{(t-1)}$. In implementation, we adopt the Sinkhorn algorithm to inexactly solve the subproblems. Its iterative schemes \eqref{eqn:sinkhorn} also imply the inheritance of zero entries. In both cases, samplings over iterations do not improve the supports to be optimized and can lead to infeasibility due to repeated intersections.
	To this end, we choose to perform sampling only in some critical iteration, say $\hat t\in\N$. For $t<\hat t$, the kernel matrices $\Phi_i^{(t)}$ are fully computed. With properly chosen $\hat t$, we can expect $X_i^{(\hat t)}$ to capture well the sparsity pattern. Then, for $t>\hat t$, the selected indices are fixed and no sampling occurs. 
	
	\par We summarize the \klalm~method with sampling (the \sklalm~method) in Algorithm \ref{alg:sbpalm}.
	\begin{algorithm}[!t]
		\caption{The \sklalm~method for solving problem \eqref{eqn:multi-block OTP}.}
		\label{alg:sbpalm}
		\begin{algorithmic}[1]
			\REQUIRE{$X_i^{(0)}\in\R^{m_i\times n_i}$, $\mba_i\in\R^{m_i}$, $\mbb_i\in\R^{n_i}$ ($i=1,\ldots,N$), $\gamma\in[0,1]$, $\{n_{s,i}\}_{i=1}^N\subseteq\N$, $\hat t$, $t_{\max}\in\N$.}
			\STATE{Set $t:=0$.}
			\WHILE{\textit{certain conditions are not satisfied} \textbf{and} $t<t_{\max}$}
			\FOR{$i=1,\ldots,N$}
			\STATE{Select a proximal parameter $\mu_i^{(t)}>0$.}
			\IF{$t=\hat t$}
			\STATE{Randomly pick a subset $\calI_i^{(t)}\subseteq\{(j,k):j=1,\ldots,m_i,~k=1,\ldots,n_i\}$ following the Poisson sampling with $P_i^{(t)}=(p_{i,jk}^{(t)})\in\R_+^{m_i\times n_i}$ given by \eqref{eqn:subsampling probability} and $n_{s,i}$.}
			\ENDIF
			\IF{$t<\hat t$}
			\STATE{Let $\hat\Phi_i^{(t)}:=\Phi_i^{(t)}$ defined in the formula \eqref{eqn:kernel of klalm}.}
			\ELSE
			\STATE{Construct $\hat\Phi_i^{(t)}\in\R^{m_i\times n_i}$ as in the formula \eqref{eqn:sampled kernel sklalm} with $\calI_i^{(\hat t)}$ and $P_i^{(\hat t)}$.}
			\ENDIF
			\STATE{Solve subproblem \eqref{eqn:dual of sklalm} or \eqref{eqn:subprob of sklalm} to obtain $X_i^{(t+1)}\in\R^{m_i\times n_i}$.}
			\ENDFOR
			\STATE{Set $t:=t+1$.}
			\ENDWHILE
			\ENSURE{Approximate solution $(X_1^{(t)},\ldots,X_N^{(t)})\in\bigtimes_{i=1}^N\R^{m_i\times n_i}$.}
		\end{algorithmic}
	\end{algorithm}
	
	\subsection{Computational complexities}
	
	\par We compare the single-iteration computational complexities of the \eralm, \seralm, \klalm, and \sklalm~methods. In particular, we focus on the cost of computing the (sparse) kernel matrices, performing importance sampling, and the subiterations within the Sinkhorn algorithm \eqref{eqn:sinkhorn}. For a summary, see Table \ref{tab:complexities}, where $s_{\max}\in\N$ is the maximum subiteration number, and we assume $m_i\equiv m\in\N$, $n_i\equiv n\in\N$, $n_{s,i}\equiv n_s\in\N$ ($i=1,\ldots,N$) for better readability. 
	\begin{table}[!t]
		\centering
		\caption{A comparison of computational complexities. }
		\label{tab:complexities}
		\resizebox{.6\linewidth}{!}{\begin{tabular}{c|c|c}
				\toprule
				
				\tabincell{c}{Ingredients\\in one iteration} & {\large\eralm} & {\large\klalm} \\\midrule
				
				Kernel matrices & $Nmn$ entries & $Nmn$ entries \\\midrule[.01pt]
				
				Subiterations & $s_{\max}\times Nmn$ & $s_{\max}\times Nmn$ \\
				
				
				\midrule\midrule
				
				\tabincell{c}{Ingredients\\in one iteration} & {\large\seralm} & {\large\sklalm} \\\midrule
				
				Sampling & $\calO(Nmn)$ & $\calO(Nmn)$ ($t=\hat t$) \\\midrule[.01pt]
				
				Kernel matrices & $Nn_s$ entries & \tabincell{c}{$Nmn$ entries ($t<\hat t$)\\$Nn_s$ entries ($t\ge\hat t$)} \\ \midrule[.01pt]
				
				Subiterations & $s_{\max}\times Nn_s$ & \tabincell{c}{$s_{\max}\times Nmn$ ($t<\hat t$)\\$s_{\max}\times Nn_s$ ($t\ge\hat t$)} \\
				
				
				\bottomrule
		\end{tabular}}
	\end{table}
	
	\par Armed with warm starts, the Sinkhorn algorithm usually terminates after few subiterations. As such, the \sklalm~method enjoys the lowest complexity per iteration when $t>\hat t$. Given $n_s\sim(m+n)^{1+\tau}$ with $\tau\in(0,1)$, this advantage becomes more evident as $\tau$ tends to $0$ or $m+n$ goes to $+\infty$.
	
	\section{Convergence analysis}\label{sec:convergence analyses}
	
	\par In this section, we present the convergence and asymptotic properties of both the \eralm~and \seralm~methods. Since the KL divergence lacks local Lipschitz smoothness, the analysis for the \klalm~and \sklalm~methods can be highly nontrivial and is left over as future work. Nevertheless, the theoretical results obtained for the \eralm~and \seralm~methods already deserve a whistle. To the best of our knowledge, our work is the first attempt in incorporating randomized matrix sparsification into numerical methods for multi-block nonconvex optimization, while establishing the convergence and asymptotic properties. 
	
	\par All the results are obtained by assuming that the subproblems are exactly solved. Furthermore, the subproblems of the \seralm~method are assumed to be feasible in their primal forms in all iterations, so that the strong duality holds. Although at present no analysis is present on the conditions under which the latter assumption is fulfilled, we remark that sampling parameters without careful selection (say, $n_{s,i}=\lfloor (m_i+n_i)^{1.5}\rfloor$) already meets the demand in numerical simulations (see section \ref{sec:numerical experiments}). 
	
	\par To characterize the stationarity violation for problem \eqref{eqn:multi-block OTP}, we define the residual functions: for any $X:=(X_1,\ldots,X_N)\in\bigtimes_{i=1}^N\R^{m_i\times n_i}$ and $i=1,\ldots,N$,
	\begin{equation}
		R_i(X):=\max_{T\in\calU(\mba_i,\mbb_i)}\inner{\nabla_if(X),X_i-T}.
		\label{eqn:residual function}
	\end{equation}
	Moreover, let $R:=\sum_{i=1}^NR_i$. It is not hard to verify that $R(X)\ge0$ for any $X\in\bigtimes_{i=1}^N\calU(\mba_i,\mbb_i)$ and that $X\in\bigtimes_{i=1}^N\calU(\mba_i,\mbb_i)$ is a Karush-Kuhn-Tucker point of problem \eqref{eqn:multi-block OTP} if and only if $R(X)=0$ holds. For the iterate $X^{(t)}$ generated by the \eralm~or \seralm~method, $R(X^{(t)})$ can thus characterize the stationarity violation at $X^{(t)}$. 
	
	\par We assume Lipschitz smoothness of $f$ over the feasible region, which holds automatically, e.g., for the quantum physics application (see section \ref{subsec:prob descrip}).
	
	\begin{assumption}\label{assume:Lip grad}
		The gradient of the function $f$ is Lipschitz continuous over $\bigtimes_{i=1}^N\calU(\mba_i,\mbb_i)$, i.e., there exists an $L\ge0$ such that, for $i=1,\ldots,N$,
		$$\snorm{\nabla_if(X)-\nabla_if(X')}\le L\snorm{X-X'}~~\text{for all}~X,X'\in\bigtimes_{i=1}^N\calU(\mba_i,\mbb_i).$$
	\end{assumption}
	
	
	
	
	\par We first give the convergence and asymptotic properties of the \eralm~method. The proofs of the theorem and corollary below can be found in Appendix \ref{appsec:convergence eralm}. 
	
	\begin{theorem}\label{theorem:convergence of eralm}
		Suppose that Assumption \ref{assume:Lip grad} holds. Let $\{X^{(t)}\}$ be the iterate sequence generated by the \eralm~method when
			\begin{equation}
				t_{\max}\ge\frac{f(X^{(0)})-\underline{f}}{2\bar d^2LN(2\sqrt{N}+1)},\quad\alpha_1^{(t)}=\cdots=\alpha_N^{(t)}\equiv\alpha:=\frac{1}{\bar d}\sqrt{\frac{f(X^{(0)})-\underline{f}}{2LN(2\sqrt{N}+1)t_{\max}}},
				\label{eqn:optimal step size full}
			\end{equation}
			and $\lambda_1^{(t)}=\cdots=\lambda_N^{(t)}\equiv\lambda$ for $0\le t\le t_{\max}$ and subproblems \eqref{eqn:subprob in eralm} are exactly solved, where $\underline{f}\in\R$ is less than or equal to the optimal value of problem \eqref{eqn:multi-block OTP}, 
		$$d_i:=\min\lrbrace{\sqrt{m_i}\norm{\mba_i}_\infty,\sqrt{n_i}\norm{\mbb_i}_\infty}~(i=1,\ldots,N),\quad\bar d:=\max_{i=1}^Nd_i.$$
		Then
		\begin{equation}\label{eq:thm1}
			0\le\frac{1}{t_{\max}}\sum_{t=0}^{t_{\max}-1}R(X^{(t)})\le2\bar d(2N+1)\sqrt{\frac{L(f(X^{(0)})-\underline{f})}{t_{\max}}}+N\lambda\bar h,
		\end{equation}
		where $\bar h:=-\min_{i=1}^Nh(\mba_i\mbb_i^\T)$.
	\end{theorem}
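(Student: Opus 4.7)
The plan is to combine the block Lipschitz descent lemma with the optimality of each entropy-regularized subproblem \eqref{eqn:subprob in eralm}, telescope the resulting per-iteration descent over $t_{\max}$ iterations, and balance the two resulting error terms by the stated choice of step size. For each outer iteration $t$ and block index $i$, I would introduce the intermediate iterate $Y^{(t,i)}:=(X_{<i}^{(t+1)}, X_{\geq i}^{(t)})$, so that $Y^{(t,0)} = X^{(t)}$, $Y^{(t,N)} = X^{(t+1)}$, and $C_i^{(t)} = \nabla_i f(Y^{(t,i)})$. Applying Assumption~\ref{assume:Lip grad} block-wise together with $X_i^{(t+1)} - X_i^{(t)} = \alpha(\tilde X_i^{(t+1)} - X_i^{(t)})$ yields
\[
f(Y^{(t,i+1)}) \leq f(Y^{(t,i)}) + \alpha \inner{C_i^{(t)}, \tilde X_i^{(t+1)} - X_i^{(t)}} + \tfrac{L\alpha^2}{2}\snorm{\tilde X_i^{(t+1)} - X_i^{(t)}}^2.
\]
Then, comparing $\tilde X_i^{(t+1)}$ against the linear minimizer $T_i^\star \in \argmin_{T \in \calU(\mba_i,\mbb_i)}\inner{C_i^{(t)}, T}$ in the subproblem's optimality condition gives
\[
\inner{C_i^{(t)}, \tilde X_i^{(t+1)} - X_i^{(t)}} \leq -R_i(Y^{(t,i)}) + \lambda(h(T_i^\star) - h(\tilde X_i^{(t+1)})) \leq -R_i(Y^{(t,i)}) + \lambda \bar h,
\]
where the last step controls the entropy gap by noting that $h(T) \leq 0$ on $\calU(\mba_i,\mbb_i)$ (each entry lies in $[0,1]\subseteq[0,e]$) and $h(T) \geq h(\mba_i\mbb_i^\top)$ (the product coupling is the negative-entropy minimizer).

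Next, I would bound all Frobenius distances over the polytope by $d_i$: for any $T\in\calU(\mba_i,\mbb_i)$, the row-wise estimate $\sum_{k} t_{jk}^2 \leq (\max_k t_{jk})(\sum_k t_{jk}) \leq a_{i,j}^2$ gives $\snorm{T}^2 \leq \snorm{\mba_i}_2^2 \leq m_i\snorm{\mba_i}_\infty^2$, and symmetrically with $\mbb_i$, hence $\snorm{\tilde X_i^{(t+1)} - X_i^{(t)}} \leq 2d_i \leq 2\bar d$. To transfer the cyclic-block residual $R_i(Y^{(t,i)})$ back to $R_i(X^{(t)})$, I would invoke Lipschitz continuity of $\nabla_i f$ together with the accumulated displacement $\snorm{Y^{(t,i)} - X^{(t)}}^2 = \alpha^2 \sum_{j<i}\snorm{\tilde X_j^{(t+1)} - X_j^{(t)}}^2 \leq 4\alpha^2 N \bar d^2$ and the polytope-diameter bound to conclude $R_i(X^{(t)}) - R_i(Y^{(t,i)}) \leq 4L\alpha\sqrt{N}\bar d^2$. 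Summing the block descent inequality over $i = 1, \ldots, N$ and rearranging then yields, for each $t$,
\[
\alpha R(X^{(t)}) \leq f(X^{(t)}) - f(X^{(t+1)}) + N\alpha\lambda\bar h + 2LN\alpha^2(2\sqrt{N}+1)\bar d^2.
\]

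Telescoping from $t = 0$ to $t_{\max} - 1$, using $f(X^{(t_{\max})}) \geq \underline{f}$, and dividing by $\alpha t_{\max}$ delivers the average-residual bound with two $\alpha$-dependent terms: one $\propto 1/\alpha$ from the initial gap and one $\propto \alpha$ from the step-size-quadratic term. Minimizing their sum over $\alpha$ recovers exactly \eqref{eqn:optimal step size full}, while the stated lower bound on $t_{\max}$ ensures $\alpha \leq 1$ (as required by the algorithm); the elementary inequality $\sqrt{2N(2\sqrt{N}+1)} \leq 2N+1$ valid for $N \geq 1$ produces the clean constant in \eqref{eq:thm1}. The main obstacles I anticipate are (i) establishing the uniform entropy-gap bound $h(T_i^\star) - h(\tilde X_i^{(t+1)}) \leq \bar h$, which is the irreducible price of entropic regularization and forces the $N\lambda\bar h$ bias in the final estimate, and (ii) the bookkeeping for the cyclic, nonconvex block structure, which must convert $R_i$ evaluated at $Y^{(t,i)}$ into $R_i$ at $X^{(t)}$ and is what produces the extra $2\sqrt{N}$ factor in the quadratic-in-$\alpha$ term. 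The final step-size balancing is then standard Frank--Wolfe-style tuning.
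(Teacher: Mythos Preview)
Your proposal is correct and follows essentially the same route as the paper's proof: the paper likewise uses the block Lipschitz descent inequality, the entropy-error bound $\inner{C_i^{(t)},\tilde X_i^{(t+1)}-\bar X_i^{(t+1)}}\le -\lambda h(\mba_i\mbb_i^\T)$ (their Lemma~\ref{lemma:entropy error}), the polytope diameter bound $\snorm{T-T'}\le 2d_i$ (their Lemma~\ref{lemma:diameter}), and the Lipschitz property of $R_i$ (their Lemma~\ref{lemma:Lip residual}) to reach the per-iteration inequality $\alpha R(X^{(t)})\le f(X^{(t)})-f(X^{(t+1)})+N\alpha\lambda\bar h+2\bar d^2LN(2\sqrt{N}+1)\alpha^2$, then telescopes and plugs in the stated $\alpha$, invoking exactly the inequality $2N(2\sqrt{N}+1)<(2N+1)^2$ you mention.
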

	
	\begin{remark}
		\par In Theorem \ref{theorem:convergence of eralm}, as well as the subsequent Corollary \ref{corollary:asymptotic full}, Theorem \ref{theorem:convergence of seralm} and Corollary \ref{corollary:asymptotic sample}, the requirements $\alpha_1^{(t)}=\cdots=\alpha_N^{(t)}$ and $\lambda_1^{(t)}=\cdots=\lambda_N^{(t)}$ are assumed for better readability. The theorems and corollaries can be extended without too much difficulty to the case where $\alpha_i^{(t)}$ and $\lambda_i^{(t)}$ vary across $i\in\{1,\ldots,N\}$ and $0\le t\le t_{\max}$.
	\end{remark}
	
	\par The errors related to the entropy terms are inevitable because the objective function in the subproblem of the \eralm~method is not a local approximation for $f$. Nevertheless, the right-hand side of inequality \eqref{eq:thm1} vanishes in the limit $\sum_{i=1}^N(m_i+n_i)\to+\infty$ after choosing proper $t_{\max}$ and $\lambda$ and imposing Assumption \ref{assumption:discretization} below. 
	This is of particular importance for large-scale applications.
	
	
	
	\begin{assumption}
		\begin{enumerate}[label=(\roman*),itemsep=-0.05cm]
			\item There exists an $\underline{f}\in\R$ such that the optimal value of problem \eqref{eqn:multi-block OTP} is lower bounded by $\underline{f}$ for any $\{m_i\}_{i=1}^N$, $\{n_i\}_{i=1}^N\subseteq\N$.
			
			\item There exists a $q>0$ such that, for any $\{m_i\}_{i=1}^N$, $\{n_i\}_{i=1}^N\subseteq\N$, $\mba_i^\T\one_{m_i}=\mbb_i^\T\one_{n_i}=1$, $\max_ja_{i,j}\le q\cdot\min_ja_{i,j}$, and $\max_kb_{i,k}\le q\cdot\min_kb_{i,k}$.
			
			\item There exists a $\theta\ge0$ such that, for any $\{m_i\}_{i=1}^N$, $\{n_i\}_{i=1}^N\subseteq\N$, the block Lipschitz constant $L=\calO(\sum_{i=1}^N(m_i+n_i)^\theta)$.
			
			\item There exists a $\xi\ge0$ such that $\max_{i=1}^N(m_i+n_i)/\min_{i=1}^N(m_i+n_i)\le\xi$ for any $\{m_i\}_{i=1}^N$, $\{n_i\}_{i=1}^N\subseteq\N$.
		\end{enumerate}
		\label{assumption:discretization}
	\end{assumption}
	
	\begin{remark}
		\par Items (i) and (ii) in Assumption \ref{assumption:discretization} are motivated by the application of interest, where a continuous problem arises and one seeks to solve its discretized version. For example, in the application of strongly correlated quantum physics (see section \ref{subsec:prob descrip} and also \cite{chen2014numerical,hu2023global}), the discretized problem has a natural objective lower bound $\underline{f}=0$ due to the nonnegativity of energy, $\mba_i$ and $\mbb_i$ are discretization of the so-called single-particle density, whose integral is a prescribed constant. Incidentally, up to normalizing $f$, we assume the total mass of marginals to be 1 in item (ii). For simplicity in theoretical analysis, we adopt items (iii) and (iv), which can be further relaxed to some extent.
	\end{remark}
	\begin{corollary}\label{corollary:asymptotic full}
		Suppose that Assumptions \ref{assume:Lip grad} and \ref{assumption:discretization} (i)-(iii) hold. Let $\{X^{(t)}\}$ be the iterate sequence generated by the \eralm~method when
		\begin{align*}
			t_{\max}&\ge\max\lrbrace{\Omega\lrbracket{\sum_{i=1}^N(m_i+n_i)^\eta},\frac{f(X^{(0)})-\underline{f}}{2\bar d^2LN(2\sqrt{N}+1)}},\quad f(X^{(0)})\le M,\\
			\alpha_1^{(t)}&=\cdots=\alpha_N^{(t)}\equiv\alpha,\quad\lambda_1^{(t)}=\cdots=\lambda_N^{(t)}\equiv\lambda=o\lrbracket{\frac{1}{\sum_{i=1}^N\log m_in_i}}
		\end{align*}
		for $0\le t\le t_{\max}$ and subproblems \eqref{eqn:subprob in eralm} are exactly solved, where $\eta(>\theta)$ and $M$ are constants independent from $\{m_i\}_{i=1}^N$ and $\{n_i\}_{i=1}^N$ and $\alpha$ is defined in equation \eqref{eqn:optimal step size full}. 
		Then $\sum_{t=0}^{t_{\max}-1}R(X^{(t)})/t_{\max}\to0$ as $\sum_{i=1}^N(m_i+n_i)\to+\infty$. 
	\end{corollary}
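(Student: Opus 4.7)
The plan is to apply Theorem \ref{theorem:convergence of eralm} and then show that each of the two terms on the right-hand side of \eqref{eq:thm1} tends to $0$ as $\sum_{i=1}^N(m_i+n_i)\to+\infty$. The lower bound on $t_{\max}$ required in Theorem \ref{theorem:convergence of eralm} is built into the hypothesis of the corollary, so the inequality \eqref{eq:thm1} is immediately available.

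First I would bound $\bar d$. By Assumption \ref{assumption:discretization}(ii), $\mba_i^\T\one_{m_i}=1$ together with $\max_j a_{i,j}\le q\min_j a_{i,j}$ implies $\norm{\mba_i}_\infty\le q/m_i$, hence $\sqrt{m_i}\norm{\mba_i}_\infty\le q/\sqrt{m_i}\le q$; the same estimate holds for $\mbb_i$. Therefore $d_i\le q$ and $\bar d\le q$, a constant independent of the discretization. For the first term in \eqref{eq:thm1} I then use $L=\calO(\sum_{i=1}^N(m_i+n_i)^\theta)$ from Assumption \ref{assumption:discretization}(iii), $f(X^{(0)})-\underline f\le M-\underline f$ (a constant by $f(X^{(0)})\le M$ and Assumption \ref{assumption:discretization}(i)), and the choice $t_{\max}=\Omega(\sum_{i=1}^N(m_i+n_i)^\eta)$. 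This yields
\begin{equation*}
2\bar d(2N+1)\sqrt{\frac{L(f(X^{(0)})-\underline f)}{t_{\max}}}=\calO\bigl((\textstyle\sum_{i=1}^N(m_i+n_i))^{(\theta-\eta)/2}\bigr),
\end{equation*}
which vanishes since $\eta>\theta$.

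Next I would control the entropy term $N\lambda\bar h$. Expanding the product measure and using $\mba_i^\T\one_{m_i}=\mbb_i^\T\one_{n_i}=1$ gives
\begin{equation*}
h(\mba_i\mbb_i^\T)=\sum_j a_{i,j}\log a_{i,j}+\sum_k b_{i,k}\log b_{i,k}-1\ge -\log m_i-\log n_i-1,
\end{equation*}
using the standard lower bound on the negative entropy of a probability vector. Hence $\bar h\le\max_i\log(m_in_i)+1\le\sum_{i=1}^N\log(m_in_i)+1$. Combined with the choice $\lambda=o\bigl(1/\sum_{i=1}^N\log(m_in_i)\bigr)$ and $N$ being independent of the discretization, this gives $N\lambda\bar h=o(1)$.

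Adding the two estimates shows that the right-hand side of \eqref{eq:thm1} tends to $0$ as $\sum_{i=1}^N(m_i+n_i)\to+\infty$, from which the claim follows. I expect the only mildly delicate step to be the entropy bound on $\bar h$, since one must confirm that the $\calO(\log(m_in_i))$ growth from $h(\mba_i\mbb_i^\T)$ is the tightest source of loss and that it is precisely compensated by the prescribed decay rate of $\lambda$; the rest of the argument consists of bookkeeping of the exponents in the polynomial estimates.
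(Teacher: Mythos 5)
Your proposal is correct and follows essentially the same route as the paper's proof: invoke Theorem \ref{theorem:convergence of eralm}, show via Assumption \ref{assumption:discretization} (i)--(iii) that $\bar d$ is bounded and $\bar h=\calO(\sum_{i=1}^N\log m_in_i)$, and then check that the prescribed growth of $t_{\max}$ (with $\eta>\theta$) kills the first term while the prescribed decay of $\lambda$ kills the entropy term. The paper merely asserts these bounds as "not hard to derive," whereas you supply the elementary estimates ($\norm{\mba_i}_\infty\le q/m_i$ and the standard entropy lower bound for probability vectors), so your write-up is a more detailed version of the same argument.
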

	
	\par With sampling, the analysis for the \seralm~method is much more involved, in that the entrywise matrix sparsification results in additional errors. 
	We establish the convergence and asymptotic results with the help of the theory of randomized matrix sparsification as well as the following Assumption \ref{assumption:kernel and sampling}; details are provided in Appendix \ref{appsec:convergence seralm}. 
	
	\begin{assumption}\label{assumption:kernel and sampling}
		
		\begin{enumerate}[label=(\roman*),itemsep=-0.05cm]
			\item There exist constants $\nu\in(1/2,1]$, $c_1$, $c_2$, $\hat c_2>0$ such that, for any $0\le t\le t_{\max}$ and $i=1,\ldots,N$, 
			$$\snorm{\Psi_i^{(t)}}_2\ge \frac{(m_i+n_i)^\nu}{c_1},\quad\kappa(\Psi_i^{(t)})\le c_2,\quad\kappa(\hat\Psi_i^{(t)})\le\hat c_2.$$ 
			
			\item The interpolation factor $\gamma$ is less than 1 and there exists an $\eps>0$ such that, for $i=1,\ldots,N$, 
			$$\frac{1}{\max_{j,k,t} p_{i,jk}^{(t)}} \ge n_{s,i}\ge \frac{8(m_i+n_i)^{1-2\nu}\log^4(m_i+n_i)}{(1-\gamma)w_i\log^4(1+\eps)},$$ 
			where $w_i:=\min_{j,k}\sqrt{a_{i,j}b_{i,k}}/\sum_{j',k'}\sqrt{a_{i,j'}b_{i,k'}}$.
		\end{enumerate}
	\end{assumption}
	
	\begin{remark}\label{remark:justification on sample size}
		\par Assumption \ref{assumption:kernel and sampling} is adopted for simplicity in theoretical analysis. In particular, since $w_i\le1/m_in_i$, item (ii) implies the following lower bound for $n_{s,i}$:
			$$\frac{8(m_i+n_i)^{1-2\nu}\log^4(m_i+n_i)}{(1-\gamma)w_i\log^4(1+\eps)}\ge\frac{8}{(1-\gamma)\log^4(1+\eps)}\frac{m_in_i}{(m_i+n_i)^{2\nu-1}}\log^4(m_i+n_i),$$
			which is of lower order than $m_in_i$ because $\nu\in(1/2,1]$. The condition $n_{s,i}p_{i,jk}^{(t)}\le 1$ is a convention prevalent in the literature of Poisson sampling \cite{wang2021comparative, wang2022sampling}. Note that under Assumption \ref{assumption:discretization} (ii), the entries in $\mba_i$ or $\mbb_i$ are of the same order. Therefore,  $w_i=\Theta(1/m_in_i)$ and $p_{i,jk}^{(t)} = \Theta(1/m_in_i)$ after choosing a proper value for $\gamma$. Item (ii) then holds if we choose $n_{s,i}=\Theta(m_in_i\log^4(m_i+n_i)/(m_i+n_i)^{2\nu-1})$ and the values of $\eps$ and $\gamma$ are independent from $\{m_i\}_{i=1}^N$ and $\{n_i\}_{i=1}^N$.
	\end{remark}

	\begin{theorem}\label{theorem:convergence of seralm}
		Suppose that Assumption \ref{assume:Lip grad} holds. Let $\{X^{(t)}\}$ be the iterate sequence generated by the \seralm~method when 
			$$t_{\max}\ge\frac{f(X^{(0)})-\underline{f}}{2\bar d^2LN(2\sqrt{N}+1)},$$
			$\alpha_1^{(t)}=\cdots=\alpha_N^{(t)}\equiv\alpha$, and $\lambda_1^{(t)}=\cdots=\lambda_N^{(t)}\equiv\hat\lambda$ for $0\le t\le t_{\max}$, subproblems \eqref{eqn:subprob of seralm} are feasible and exactly solved, and Assumption \ref{assumption:kernel and sampling} is fulfilled, where $\underline{f}\in\R$ is less than or equal to the optimal value of problem \eqref{eqn:multi-block OTP} and $\alpha$ is defined in equation \eqref{eqn:optimal step size full}. Then, for any $m_i+n_i>\max\{152,e^{\sqrt{c_3}}\}~(i=1,\ldots,N)$, $\zeta>0$, and $\iota>0$, with probability no less than 
		$$\prod_{i=1}^N\lrbrace{\lrsquare{1-2\exp\lrbracket{-\frac{16\zeta^2}{\eps^4}\log^4(m_i+n_i)}}\lrsquare{1-\exp\lrbracket{-2\iota^2m_in_i}}}^{t_{\max}},$$ 
		there holds
		\begin{align}
			0&\le \frac{1}{t_{\max}}\sum_{t=0}^{t_{\max}-1}R(X^{(t)})\le2\bar d(2N+1)\sqrt{\frac{L(f(X^{(0)})-\underline{f})}{t_{\max}}}+2N\hat\lambda\bar h\label{eq:thm2}\\
			&+\hat\lambda\bar d \sum_{i=1}^N\sqrt{n_{s,i}+\iota\cdot m_in_i}\log\frac{1}{(1-\gamma)w_i  n_{s,i}}+\hat\lambda\sum_{i=1}^N\frac{\hat c_2c_3}{\log^2(m_i+n_i)-c_3},
			\nonumber
		\end{align}
		where $c_3:=c_1(1+\eps+\zeta)\log^2(1+\eps)$. 
	\end{theorem}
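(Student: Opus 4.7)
The plan is to follow the trajectory of Theorem~\ref{theorem:convergence of eralm} while carefully accounting for two new error sources introduced by the Poisson sampling: the multiplicative perturbation of the kernel matrix $\Psi_i^{(t)}\mapsto\hat\Psi_i^{(t)}$, and the additive shift of the cost $C_i^{(t)}\mapsto\hat C_i^{(t)}$. First I would derive the per-iteration block descent inequality under Assumption~\ref{assume:Lip grad}, using the update $X_i^{(t+1)}=(1-\alpha)X_i^{(t)}+\alpha\tilde X_i^{(t+1)}$ and the diameter bound $\snorm{X_i^{(t)}-\tilde X_i^{(t+1)}}\le2\bar d$ inherited from $\calU(\mba_i,\mbb_i)$. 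Since $\tilde X_i^{(t+1)}$ exactly minimizes \eqref{eqn:subprob of seralm}, comparing to any plan $T_i^{(t)}\in\calU(\mba_i,\mbb_i)$ achieving $R_i(X^{(t)})$ in \eqref{eqn:residual function} yields a lower bound on $\inner{C_i^{(t)},X_i^{(t)}-\tilde X_i^{(t+1)}}$ in terms of $R_i(X^{(t)})$, an entropy error bounded by $2\hat\lambda\bar h$ (both $|h(\tilde X_i^{(t+1)})|$ and $|h(T_i^{(t)})|$ contribute, explaining the coefficient $2N\hat\lambda\bar h$ in \eqref{eq:thm2}, twice that of Theorem~\ref{theorem:convergence of eralm}), and a ``sampling drift'' capturing both $\inner{C_i^{(t)}-\hat C_i^{(t)},\cdot}$ and the primal shift caused by replacing $\Psi_i^{(t)}$ by $\hat\Psi_i^{(t)}$.

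Second I would bound the sampling drift through two independent high-probability events. For the cost perturbation: by the multiplicative expression \eqref{eqn:optimal solution eralm subproblem} together with the KKT conditions of \eqref{eqn:subprob of seralm}, this shift is controlled entrywise on $\calI_i^{(t)}$ by $\hat\lambda|\log p_{i,jk}^{(t)\ast}|$; a Cauchy--Schwarz step leaves a contribution of order $\hat\lambda\bar d\sqrt{|\calI_i^{(t)}|}\log(1/\min_{j,k} p_{i,jk}^{(t)\ast})$, and Hoeffding's inequality on the independent Bernoulli sum $|\calI_i^{(t)}|$ gives $|\calI_i^{(t)}|\le n_{s,i}+\iota\cdot m_in_i$ with probability at least $1-\exp(-2\iota^2m_in_i)$. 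Combining with the lower bound $p_{i,jk}^{(t)\ast}\ge(1-\gamma)w_i n_{s,i}$ implied by Assumption~\ref{assumption:kernel and sampling}(ii) yields the third right-hand-side term of \eqref{eq:thm2}. For the kernel perturbation: a randomized matrix sparsification bound in the vein of \cite{braverman2021near,drineas2011note,kundu2017recovering} applied under Assumption~\ref{assumption:kernel and sampling}(ii) controls $\snorm{\hat\Psi_i^{(t)}-\Psi_i^{(t)}}_2$ by roughly $\snorm{\Psi_i^{(t)}}_2\sqrt{c_3}/\log^2(m_i+n_i)$ with failure probability $2\exp(-16\zeta^2\log^4(m_i+n_i)/\eps^4)$; combined with the conditioning bounds in Assumption~\ref{assumption:kernel and sampling}(i) and propagated through the multiplicative primal representation, this produces the final term $\hat\lambda\hat c_2c_3/(\log^2(m_i+n_i)-c_3)$.

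Third I would telescope the descent inequality from $t=0$ to $t_{\max}-1$, divide by $t_{\max}$, and optimize $\alpha$ exactly as in \eqref{eqn:optimal step size full} to recover the first term of \eqref{eq:thm2}; the remaining three terms follow by aggregating the block-wise entropy and sampling bounds. Taking the intersection over $i\in\{1,\ldots,N\}$ and $t\in\{0,\ldots,t_{\max}-1\}$ of the two (independent across iterations) high-probability events produces the displayed product probability, while the hypothesis $m_i+n_i>e^{\sqrt{c_3}}$ keeps $\log^2(m_i+n_i)-c_3$ positive and the hypothesis $m_i+n_i>152$ ensures the matrix-sparsification regime is valid.

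The main obstacle will be translating the random spectral error $\snorm{\hat\Psi_i^{(t)}-\Psi_i^{(t)}}_2$ into a usable bound on the primal iterate $\tilde X_i^{(t+1)}$ through the Sinkhorn fixed point. Stability of that fixed point requires careful use of both condition-number bounds in Assumption~\ref{assumption:kernel and sampling}(i), because the perturbed diagonal scalings $(\check\uu_i^{(t,\star)},\check\vvv_i^{(t,\star)})$ from \eqref{eqn:optimal solution eralm subproblem} must be shown to remain close to their unperturbed counterparts, and this closeness has to hold conditionally on the high-probability event on which $\hat\Psi_i^{(t)}$ stays well-conditioned. A secondary concern is ensuring that the feasibility-plus-strong-duality hypothesis on the sampled subproblem is compatible with that event, so that the multiplicative representation for $\tilde X_i^{(t+1)}$ is valid throughout the high-probability computation.
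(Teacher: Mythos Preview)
Your outline is correct in structure and matches the paper on the descent inequality, the Cauchy--Schwarz/Hoeffding treatment of the cost shift $C_i^{(t)}\mapsto\hat C_i^{(t)}$, the telescoping, and the final probability aggregation. The key difference lies in how the kernel perturbation $\Psi_i^{(t)}\mapsto\hat\Psi_i^{(t)}$ is converted into an objective error, and your flagged ``main obstacle'' is precisely what the paper manages to sidestep.

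You propose to work in the primal: control the perturbed Sinkhorn scalings and then push the spectral error $\snorm{\hat\Psi_i^{(t)}-\Psi_i^{(t)}}_2$ through the multiplicative formula \eqref{eqn:optimal solution eralm subproblem} to bound $\tilde X_i^{(t+1)}$. The paper instead works in the dual. It introduces the unrestricted entropic minimizer $\breve X_i^{(t+1)}$ and, by strong duality plus optimality of $(\tilde\uu_i^{(t,\star)},\tilde\vvv_i^{(t,\star)})$ for the \emph{sampled} dual \eqref{eqn:dual of seralm}, bounds
\[
\bigl[\inner{\hat C_i^{(t)},\tilde X_i^{(t+1)}}+\hat\lambda h(\tilde X_i^{(t+1)})\bigr]-\bigl[\inner{C_i^{(t)},\breve X_i^{(t+1)}}+\hat\lambda h(\breve X_i^{(t+1)})\bigr]\le q_i(\tilde\uu_i^{(t,\star)},\tilde\vvv_i^{(t,\star)};\hat\lambda,\hat\Psi_i^{(t)})-q_i(\tilde\uu_i^{(t,\star)},\tilde\vvv_i^{(t,\star)};\hat\lambda,\Psi_i^{(t)}).
\]
The right-hand side is the dual function evaluated at the \emph{same} point with two different kernels; since $q_i$ is linear in the kernel, this is then bounded (via a result from \cite{li2023efficient}) by $\hat c_2\hat\lambda\,r/(1-r)$ with $r=\snorm{\hat\Psi_i^{(t)}-\Psi_i^{(t)}}_2/\snorm{\Psi_i^{(t)}}_2$, and the Achlioptas--McSherry concentration \cite{achlioptas2007fast} supplies $r<c_3/\log^2(m_i+n_i)$. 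No comparison of the two sets of scalings is ever needed, and the condition number $\hat c_2$ enters only through that cited bound. Your primal route could in principle be made to work, but it would require a quantitative Sinkhorn perturbation lemma that ultimately reproduces the same inequality; the dual argument is both shorter and what actually justifies the precise form of the last term in \eqref{eq:thm2}.

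A small correction on the factor $2N\hat\lambda\bar h$: in the paper it does not come from ``$|h(\tilde X_i^{(t+1)})|$ and $|h(T_i^{(t)})|$'' in a single comparison. One $\bar h$ is the Lemma~\ref{lemma:entropy error} gap between the unrestricted LP minimizer $\bar X_i^{(t+1)}$ and the unrestricted entropic minimizer $\breve X_i^{(t+1)}$; the second $\bar h$ comes from bounding $h(\breve X_i^{(t+1)})-h(\tilde X_i^{(t+1)})\le 0-h(\mba_i\mbb_i^\T)$ when passing from primal objectives to the dual difference above. This matters because your ``compare $\tilde X_i^{(t+1)}$ directly to $T_i^{(t)}$'' step is not immediately available: $T_i^{(t)}$ need not satisfy $(T_i^{(t)})_{(\calI_i^{(t)})^c}=0$, so optimality of $\tilde X_i^{(t+1)}$ over the restricted set cannot be invoked against it. The intermediate $\breve X_i^{(t+1)}$ and the dual comparison are exactly what bridge the restricted and unrestricted feasible sets.
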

	
	
	
	
	\begin{corollary}\label{corollary:asymptotic sample}
		Suppose that Assumptions \ref{assume:Lip grad} and \ref{assumption:discretization} hold. Let $\{X^{(t)}\}$ be the iterate sequence generated by the \seralm~method when
		\begin{align*}
			t_{\max}&=\Theta\lrbracket{\sum_{i=1}^N(m_i+n_i)^\eta}~\text{satisfying}~t_{\max}\ge\frac{f(X^{(0)})-\underline{f}}{2\bar d^2LN(2\sqrt{N}+1)},\quad f(X^{(0)})\le M,\\
			\alpha_1^{(t)}&=\cdots=\alpha_N^{(t)}\equiv\alpha,~n_{s,i}=\Theta\lrbracket{\frac{m_in_i}{(m_i+n_i)^{2\nu-1}}\log^4(m_i+n_i)},\\
			\lambda_1^{(t)}&=\cdots=\lambda_N^{(t)}\equiv\hat\lambda=o\lrbracket{\frac{1}{\sum_{i=1}^N\sqrt{m_in_i}\log(m_i+n_i)}}
		\end{align*}
		for $0\le t\le t_{\max}$, subproblems \eqref{eqn:subprob of seralm} are feasible and exactly solved, and Assumption \ref{assumption:kernel and sampling} is fulfilled, where $c_1$, $c_2$, $\hat c_2$, $\eps$, $\eta(>\theta)$, $\gamma$, $\nu$, $\xi$, and $M$ are independent from $\{m_i\}_{i=1}^N$ and $\{n_i\}_{i=1}^N$ and $\alpha$ is defined in equation \eqref{eqn:optimal step size full}. Then $\sum_{t=0}^{t_{\max}-1}R(X^{(t)})/t_{\max}\to0$ as $\sum_{i=1}^N(m_i+n_i)\to+\infty$ with probability going to $1$.
	\end{corollary}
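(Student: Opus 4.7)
The plan is to substitute the parameter choices prescribed by the corollary directly into the explicit bound \eqref{eq:thm2} from Theorem \ref{theorem:convergence of seralm}, and verify term by term that the right-hand side vanishes as $\sum_{i=1}^N(m_i+n_i)\to+\infty$ while the companion probability tends to $1$. I treat $N$ as fixed and regard $c_1$, $c_2$, $\hat c_2$, $\eps$, $\eta$, $\theta$, $\gamma$, $\nu$, $\xi$, $q$, $M$, and $\underline f$ as absolute constants independent of the dimensions.

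First I would collect a handful of size estimates from Assumption \ref{assumption:discretization}. Item (ii) forces $\snorm{\mba_i}_\infty=\Theta(1/m_i)$ and $\snorm{\mbb_i}_\infty=\Theta(1/n_i)$, hence $d_i=\Theta(1/\sqrt{\max(m_i,n_i)})$ and in particular $\bar d=O(1)$. Item (iv) makes the blocks comparable, so $w_i=\Theta(1/m_in_i)$ uniformly in $i$ (as already noted in Remark \ref{remark:justification on sample size}) and $-h(\mba_i\mbb_i^\T)=\log(m_in_i)+O(1)$, whence $\bar h=O(\log\max_i m_in_i)$. Item (iii) combined with $t_{\max}=\Theta(\sum_i(m_i+n_i)^\eta)$ and $\eta>\theta$ yields $L/t_{\max}\to 0$. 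Fix any $\zeta>0$ and, for concreteness, $\iota=1$. The four additive contributions to \eqref{eq:thm2} can then be controlled in turn: term one is $O(\bar d\sqrt{L/t_{\max}})\to 0$ using $f(X^{(0)})\le M$ and $\underline f>-\infty$; term two is $O(\hat\lambda\log\max_i m_in_i)=o(1)$ since $\hat\lambda=o(1/\sum_i\sqrt{m_in_i}\log(m_i+n_i))$; term three uses $n_{s,i}=o(m_in_i)$ (valid because $\nu>1/2$) so that $\sqrt{n_{s,i}+\iota m_in_i}=O(\sqrt{m_in_i})$ and $\log(1/((1-\gamma)w_in_{s,i}))=O(\log(m_i+n_i))$, giving the bound $O(\hat\lambda\sum_i\sqrt{m_in_i}\log(m_i+n_i))=o(1)$; and term four is $O(\hat\lambda/\log^2\min_i(m_i+n_i))=o(1)$.

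For the high-probability guarantee I would set $\pi_i:=2\exp(-16\zeta^2\eps^{-4}\log^4(m_i+n_i))$ and $\pi_i':=\exp(-2\iota^2 m_in_i)$; both decay faster than any inverse polynomial in the dimensions. Using the Bernoulli-type inequality $(1-\pi)^{t_{\max}}\ge 1-t_{\max}\pi$ and expanding the product across the $N$ blocks, the probability appearing in Theorem \ref{theorem:convergence of seralm} is at least $1-\sum_{i=1}^N t_{\max}(\pi_i+\pi_i')$. Since $t_{\max}$ is only polynomially large in the dimensions whereas $\pi_i$ and $\pi_i'$ are super-polynomially small, this lower bound tends to $1$.

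The main obstacle is not any individual estimate but the joint calibration of $t_{\max}$, $n_{s,i}$, $\hat\lambda$, $\iota$, and $\zeta$: the sampling-induced terms couple $\hat\lambda$ with the Poisson-sampling noise scale $\sqrt{\iota m_in_i}$ and with the logarithmic factor $\log(1/(w_in_{s,i}))$. One has to keep $\iota$ bounded away from zero so that $t_{\max}\exp(-2\iota^2 m_in_i)\to 0$ survives, while simultaneously shrinking $\hat\lambda$ fast enough to absorb the resulting $\sqrt{m_in_i}\log(m_i+n_i)$ factor and leaving $n_{s,i}$ large enough to respect Assumption \ref{assumption:kernel and sampling}(ii). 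The scaling window prescribed by the corollary is exactly what makes all four deterministic error terms tip into $o(1)$ while keeping the probability of failure negligible.
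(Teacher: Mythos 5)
Your proposal is correct and follows essentially the same route as the paper: substitute the prescribed scalings into the bound \eqref{eq:thm2} of Theorem \ref{theorem:convergence of seralm}, use Assumption \ref{assumption:discretization} to get $\bar d=O(1)$, $\bar h=O(\log\max_i m_in_i)$, $w_i=\Theta(1/m_in_i)$, check each of the four error terms vanishes, and then drive the failure probability to zero. The only (harmless) deviation is in the probability step, where you invoke super-polynomial decay of $\exp(-c\log^4(m_i+n_i))$ directly via a Bernoulli-type bound, whereas the paper coarsens this to $(m_i+n_i)^{-16\zeta^2/\eps^4}$ and then picks $\zeta>\sqrt{\eta}\,\eps^2/4$; both arguments are valid, and yours avoids tuning $\zeta$ against $\eta$.
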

	
	\section{Numerical experiments}\label{sec:numerical experiments}
	
	\par We demonstrate the efficiency of the proposed methods via numerical results on model one/two/three-dimensional strongly correlated electron systems. We first describe the related optimization problem of the form \eqref{eqn:multi-block OTP} mathematically and provide experimental details, including systems to be simulated and default algorithmic settings. Then numerical comparisons are conducted among the \palm~method \cite{bolte2014proximal,hu2023convergence} and the newly designed four methods on the one-dimensional systems. We integrate those with favorable performances into a cascadic multigrid optimization framework for the simulations of two/three-dimensional systems. A first visualization of approximate OT maps between electron positions in three-dimensional contexts is provided. Finally, we test the scalability of methods with respect to the problem size as well as the number of variable blocks.
	
	\subsection{Problem description}\label{subsec:prob descrip}
	
	From the strong-interaction limit of density functional theory \cite{friesecke2023strong}, the strongly correlated quantum systems in the strictly correlated regime can be well understood by solving the multimarginal optimal transport problems with Coulomb cost (MMOT) \cite{alfonsi2021approximation,alfonsi2022constrained,benamou2016numerical,buttazzo2012optimal,chen2014numerical,cotar2013density,friesecke2022genetic,khoo2019convex,khoo2020semidefinite,mendl2013kantorovich}: 
		\begin{equation}
			\begin{array}{cl}
				\min\limits_{\pi} & \displaystyle\int_{(\R^d)^{N_e}}c_{\ee}(\rr_1,\ldots,\rr_{N_e})\dd\pi(\rr_1,\ldots,\rr_N), \\
				\st & \displaystyle\int_{(\R^d)^{N_e-1}}\dd\pi(\rr_1,\ldots,\rr_{i-1},\cdot,\rr_{i+1},\ldots,\rr_{N_e})=\frac{\rho}{N_e},~~i=1,\ldots,N_e.
			\end{array}
			\label{eqn:MMOT}
		\end{equation}
		Here, $d\in\{1,2,3\}$ is the system dimension, $N_e\in\N$ is the number of electrons, $\rr_i\in\R^d$ refers to the position of the $i$th electron ($i=1,\ldots,N_e$), $\rho:\R^d\to\R_+$ is the single-particle density, satisfying $\int\rho=N_e$, $c_{\ee}(\rr_1,\ldots,\rr_{N_e}):=\sum_{1\le i<j}1/\norm{\rr_i-\rr_j}$ stands for the $N$-particle Coulomb potential, and $\pi$ is a joint probability measure of $N_e$ electron positions. 
		
		\par Since the dimension of the search space in the MMOT \eqref{eqn:MMOT} scales exponentially with the number of electrons, one could adopt a Monge-like ansatz \cite{chen2014numerical,hu2023global}, which characterizes the electron-electron couplings explicitly, to transform the MMOT into the following problem:
		\begin{align}
			&\min\limits_{\{\gamma_i\}_{i=2}^{N_e}}\hspace{-3mm} && \displaystyle\sum_{i=2}^{N_e}\int_{(\R^d)^2}\frac{\rho(\rr)\gamma_i(\rr,\rr')}{\norm{\rr-\rr'}}\dd\rr\dd\rr'+\sum_{2\le i<j}\int_{(\R^d)^3}\frac{\rho(\rr)\gamma_i(\rr,\rr')\gamma_j(\rr,\rr'')}{\norm{\rr'-\rr''}}\dd\rr\dd\rr'\dd\rr'', \label{eqn:MMOT under the Monge-like ansatz}\\
			&\quad\st && \displaystyle\int_{\R^d}\gamma_i(\cdot,\rr_i)\dd\rr_i=1,~~\int_{\R^d}\gamma_i(\rr_1,\cdot)\rho(\rr_1)\dd\rr_1=\rho,~~\gamma_i\ge0,~~i=2,\ldots,N_e.\nonumber
		\end{align}
		Here, $\gamma_i:\R^d\times\R^d\to\R_+$ encodes the coupling between the first and $i$th electrons, $\rho\gamma_i$ can be understood as the joint probability density between their positions ($i=2,\ldots,N_e$). 
		
		\par To numerically solve problem \eqref{eqn:MMOT under the Monge-like ansatz}, we confine the integral domain to some bounded $\Omega\subseteq\R^d$ and adopt finite elements-like discretization. In particular, we first define a mesh $\calT=\{e_k\}_{k=1}^K$ to divide $\Omega$ into $K$ non-overlapping elements, i.e., $\cup_{k=1}^Ke_k=\Omega$ and $e_k\cap e_{k'}=\emptyset$ whenever $k\ne k'$. Then we use a finite summation of Dirac measures to approximate $\rho$ as $\rho\approx\sum_{k=1}^K\varrho_k\delta_{\d_k}$, where $\varrho_k:=\int_{e_k}\rho$, $\d_k\in\R^d$ is the barycenter of the element $e_k$ ($k=1,\ldots,K$). Let $\brho:=[\varrho_1,\ldots,\varrho_K]\in\R_{++}^K$ and $\Lambda:=\Diag(\brho)\in\R^{K\times K}$. The two-particle Coulomb potential and couplings are respectively discretized into $K\times K$ matrices $C=(c_{kl})\in\R^{K\times K}$ and $X_i=(x_{i,kl})_{kl}\in\R^{K\times K}$ ($i=2,\ldots,N_e$), where, for $i=2,\ldots,N_e$ and $k,l=1,\ldots,K$,
		$$c_{kl}:=\left\{\begin{array}{ll}
			\norm{\d_k-\d_l}^{-1}, & \text{if}~k\ne l, \\
			0, & \text{otherwise},
		\end{array}\right.\quad
		x_{i,kl}:=\frac{1}{\abs{e_k}}\int_{e_k}\int_{e_l}\gamma_i(\rr,\rr')\dd\rr'\dd\rr.$$
		Note that the diagonal entries in $C$ are set to 0 to avoid numerical instability. We impose the following extra constraints on $\{X_i\}_{i=2}^{N_e}$ to maintain the model equivalence:
		\begin{equation}
			\trace(X_i)=0,~i=2,\ldots,N_e;~\inner{X_i,X_j}=0,~i,j=2,\ldots,N_e:i\ne j.
			\label{eqn:extra constraints}
		\end{equation}
		Intuitively, the constraints \eqref{eqn:extra constraints} exclude the cases where two electrons collide. After penalizing the constraints \eqref{eqn:extra constraints} in $\ell_1$ form and transforming $X_i$ to $Y_i:=\Lambda X_i\in\R^{K\times K}$ ($i=2,\ldots,N_e$), we obtain a multi-block optimization problem over the transport polytopes of the form \eqref{eqn:multi-block OTP} (with $N=N_e-1$):
		\begin{equation}
			\begin{array}{cl}
				\min\limits_{\{Y_i\}_{i=2}^{N_e}} & \displaystyle\sum_{i=2}^{N_e}\inner{Y_i,C+\beta \Lambda^{-1}}+\sum_{2\le i<j}\inner{Y_i,\Lambda^{-1}Y_jC+\beta\Lambda^{-2}Y_j}, \\
				\st & Y_i\in\calU(\brho,\brho)\subseteq\R^{K\times K},~~i=2,\ldots,N_e.
			\end{array}
			\label{eqn:MPGCC l1}
		\end{equation}
		The matrix variable $Y_i$ can be understood as the transport plan between the positions of the first and $i$th electrons ($i=2,\ldots,N_e$). It has been shown in \cite{hu2023exactness} that there exists a $\hat\beta\ge0$ such that the optimal solutions of \eqref{eqn:MPGCC l1} satisfy the constraints \eqref{eqn:extra constraints} whenever $\beta\ge\hat\beta$.

	\subsection{Systems under simulations}
	
	\par We consider eight one/two/three-dimensional (1D/2D/3D) systems. Table \ref{tab:system info} contains their single-particle densities, domains of interest, and numbers of electrons. 
	\begin{table}[!t]
		\centering
		\caption{$1$D/$2$D/$3$D systems used for simulations. The second column lists the unnormalized single-particle densities $\rho$, the third gives the domains $\Omega$, and the last indicates the numbers of electrons $N_e$ in systems.}
		\label{tab:system info}
		\resizebox{\linewidth}{!}{
			\begin{tabular}{c|l|l|c}
					\toprule
					System No. & \multicolumn{1}{c|}{$\rho$} & \multicolumn{1}{c|}{$\Omega$} & $N_e$ \\\midrule\midrule
					\multicolumn{4}{c}{$1$D systems} \\\midrule\midrule
					
					1 & $\cos(\pi r)+1$ & $[-1,1]$ & 3 \\\midrule[0.2pt]
					
					2 & $2\rho_6(r;-0.5)+1.5\rho_4(r;0.5)$ & $[-1.5,1.5]$ & 3 \\\midrule[0.2pt]
					
					3 & $\rho_{1/\sqrt{\pi}}(r)$ & $[-2,2]$ & 7 \\\midrule[0.2pt]
					
					4 & \tabincell{l}{$\rho_4(r;-2)+\rho_4(r;-1.5)+\rho_4(r;-1)+\rho_4(r;-0.5)$\\$+\rho_4(r;2/3)+\rho_4(r;4/3)+\rho_4(r;2)$} & $[-3,3]$ & 7 \\\midrule\midrule
					
					\multicolumn{4}{c}{$2$D systems} \\\midrule\midrule
					
					5 & \tabincell{l}{$\rho_3(\rr;[0,0.96]^\T)+\rho_3(\rr;[1.032,-0.84]^\T)+\rho_3(\rr;[-1.032,-0.84]^\T)$} & $[-3,3]^2$ & 3 \\\midrule[0.2pt]
					
					6 & \tabincell{l}{$2\rho_3(\rr;[0,1.2]^\T)+\rho_3(\rr;[1.29,-1.05]^\T)+\rho_3(\rr;[-1.29,-1.05]^\T)$} & $[-3,3]^2$ & 4 \\\midrule\midrule
					
					\multicolumn{4}{c}{$3$D systems} \\\midrule\midrule
					
					7 & $\rho_3(\rr;[-1,-1,-1]^\T)+\rho_3(\rr;[1,1,-1]^\T)+\rho_3(\rr;[-1,1,1]^\T)$ & $[-2,2]^3$ & 3 \\\midrule[0.2pt]
					
					8 & $3\rho_4(\rr;[-1,0,0]^\T)+\rho_4(\rr;[1,0,0]^\T)$ & $[-2,2]\times[-1,1]^2$ & 4 \\
					
					\bottomrule
		\end{tabular}}
	\end{table}
	The component function $\rho_{\alpha}(\cdot;\mathbf{c})$ ($\alpha>0$, $\mathbf{c}\in\R^d$) is defined as
		$$\rho_{\alpha}(\rr;\mathbf{c}):=\exp\lrbracket{-\alpha\norm{\rr-\mathbf{c}}^2},~~\forall~\rr\in\R^d.$$
		We illustrate the single-particle densities in Figure \ref{fig:single-particle densities}.
		\begin{figure}[!t]
			\centering
			\includegraphics[width=\linewidth]{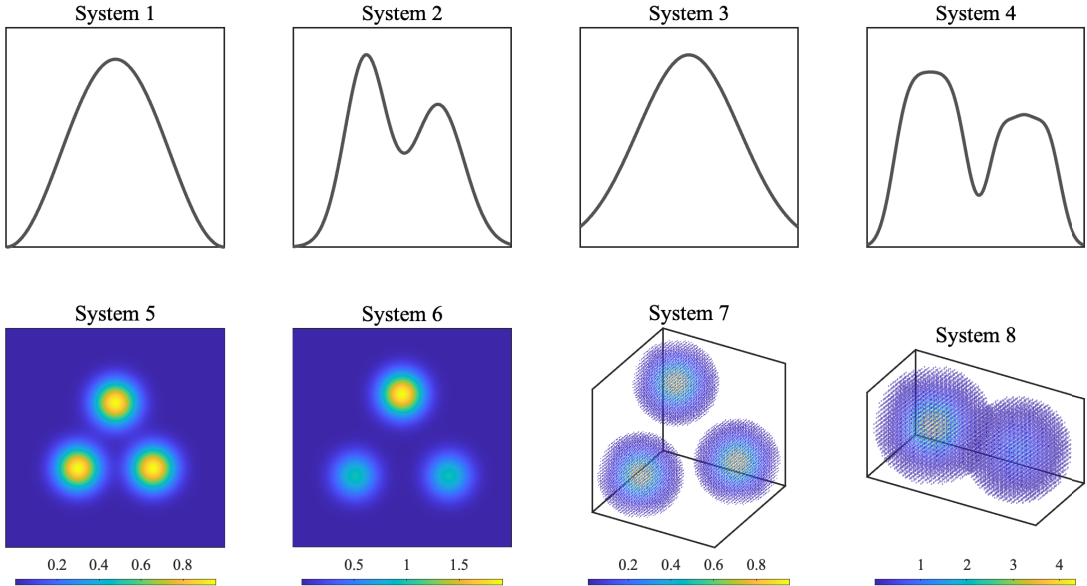}
			\caption{The illustrations of the single-particle densities listed in Table \ref{tab:system info}. For the 3D systems, we only show the regions where the values of the single-particle densities are larger than 0.01.}
			\label{fig:single-particle densities}
		\end{figure}
		
		\begin{remark}
			Nearly all the single-particle densities in Table \ref{tab:system info} comprise of Gaussian functions. These settings are reasonable because electrons tend to concentrate around the nuclei, which are represented by the potential wells. For example, the single-particle density of system 8 comprises of two Gaussian functions with weights 1 and 3, respectively. It can describe a dissociating lithium hydride \cite{filatov2015spin}, which has two nuclei with charge numbers 1 and 3, respectively. We shall note that the applicability of the proposed methods is independent from the constructions of the single-particle densities.
	\end{remark}
	
	\subsection{Default settings}\label{subsec:default settings}
	
	\par We employ either equimass or equisize discretization (to be fixed later) for problem \eqref{eqn:MPGCC l1}. 
	We set $\beta=1$ for any $K$ following \cite{hu2023global}, which is reasonable because the nonzero entries in both $C$ and $\Lambda^{-1}$ are of order $K$. For both the \eralm~and \seralm~methods, we adopt decreasing step sizes $\alpha_i^{(t)} = 1/(t+1)^{0.75}$, $i=2,\ldots,N_e$, to pursue convergence. In the \seralm~and \sklalm~methods, we let 
	$n_{s,i}=\lfloor K^{1.5} \rfloor$ ($i=2,\ldots,N_e$). 
	All the proposed methods set the regularization or proximal parameters adaptively as 
	\begin{equation}
		\lambda_i^{(t)}=\sigma\snorm{\tilde\vvv_i^{(t)}}_\infty/(20\log(K)),~~\mu_i^{(t)}=\sigma\snorm{\vvv_i^{(t)}}_\infty/(20\log(K)),~~i=2,\ldots,N_e,
		\label{eqn:proximal parameter}
	\end{equation}
	where $\sigma=1$. The selections of $\gamma$ and $\hat t$ will be presented in section \ref{subsec:algorithm comparison}. We shall point out that better performances of the proposed methods can be expected with more careful parameter tuning. All the subproblems are solved using the Sinkhorn algorithm \eqref{eqn:sinkhorn}, with warm starts for acceleration. Incidentally, to circumvent possible underflow and achieve better convergence rates \cite{dvurechensky2018computational} when using the Sinkhorn algorithm, we discard the entries in $\brho$ that are smaller than 0.1\% of $\snorm{\brho}_\infty$; this is also practically reasonable, in that the regions of low probabilities are far less important. With the abuse of notations, we still denote the truncated vector by $\brho$ so that other symbols remain unchanged. Regarding the stopping criteria, we terminate the Sinkhorn algorithm whenever the feasibility violation $\snorm{Y_i^{(t+1)}\one_K-\brho}_\infty$ is less than $10^{-6}$ or the subiteration number arrives at $s_{\max}=20$ ($i=2,\ldots,N_e$). We stop the outer loop if 
	$$\Delta^{(t)}:=\frac{1}{N_e-1}\sum_{i=2}^{N_e}\norm{\Lambda^{-1}\big(Y_i^{(t)}-Y_i^{(t-1)}\big)}$$
	falls below a prescribed $tol>0$ or the iteration number reaches a prescribed $t_{\max}\in\N$. The specific values of $tol$ and $t_{\max}$ are detailed in the subsequent subsections. All the experiments presented are run in a platform with Intel(R) Xeon(R) Gold 6242R CPU @ 3.10GHz and 510GB RAM running \textsc{Matlab} R2019b under Ubuntu 20.04.
	
	
	
	\par For quantities of interest, we monitor the converged objective value (obj) and approximate the so-called strictly-correlated-electrons (SCE) potential \cite{chen2014numerical,di2020optimal,hu2023global} with the output dual variables. The SCE potential is the functional derivative of the optimal value of the MMOT with respect to $\rho$ and is important for electronic structure calculations \cite{chen2014numerical}. Taking the \eralm~method for example, we approximate the SCE potential by $\vvv:=\tilde{\vvv}-\min_{k=1}^K\{\tilde v_k\}\cdot\one_K\in\R^K$, where
	$$\tilde{\vvv}:=\frac{1}{N_e-1}\sum_{i=2}^{N_e}\tilde{\vvv}_i\in\R^K,$$
	and $\{\tilde{\vvv}_i\}_{i=2}^{N_e}$ are the dual solutions yielded by the Sinkhorn algorithm. In addition, for the cases where explicit constructions of the optimal solutions to problem \eqref{eqn:MPGCC l1} and the SCE potentials of the MMOT are available (e.g., in one-dimensional settings \cite{colombo2015multimarginal,hu2023exactness}), we also evaluate the qualities of the converged solutions via the relative errors of the objective values (err\_obj) and SCE potentials (err\_sce). They are defined respectively as
	$$\errobj:=\abs{\frac{\text{obj}-\text{obj}^{\star}}{\text{obj}^{\star}}},~\errsce:=\frac{\snorm{\vvv-\vvv^{\star}}_\infty}{\snorm{\vvv^{\star}}_\infty}.$$
	Here, $\text{obj}^{\star}\in\R$ denotes the optimal objective value of problem \eqref{eqn:MPGCC l1} and $\vvv^{\star}\in\R^K$ refers to the vector made up by the values of the SCE potential at barycenters. For the efficiency comparison, we record the CPU time in seconds (T).
	
	
	\subsection{Algorithm comparisons}\label{subsec:algorithm comparison}
	
	\par We conduct comparisons among the \palm~method \cite{bolte2014proximal,hu2023convergence} and the proposed four methods on the 1D systems in Table \ref{tab:system info}. In particular, we first test the \seralm~methods with different sampling probabilities and the \sklalm~methods with different choices of $\hat t$. As a byproduct, we select default values of $\gamma$ and $\hat t$. Secondly, we compare the proposed four methods. Those with favorable performances are tested against the \palm~method in the third part and used for the simulations of 2D and 3D systems in section \ref{subsec:large-scale simulations}.
	
	\subsubsection{Comparisons among the {\sc S-eralm}~methods with different sampling probabilities}\label{subsubsec:different gamma}
	
	We consider randomly generated sampling probabilities and importance sampling-based probabilities \eqref{eqn:subsampling probability} ($\gamma\in\{0.1,0.3,0.5,0.7,0.9,0.99,0.999\}$) on system 1 with $K=90$ (equimass discretization). For each setting of sampling probability, 10 random trials are generated by the built-in function ``\texttt{rand}'' in \textsc{Matlab}. The stopping parameters are $tol=5\times10^{-3}$ and $t_{\max}=+\infty$. We record the achieved err\_obj, err\_sce, and required T averaged over 10 trials for each setting in Table \ref{tab:compare sampling probabilities}.
	\begin{table}[htb]
		\centering
		\caption{The achieved err\_obj, err\_sce, and required T averaged over 10 trials given by the \seralm~methods with different sampling probabilities on system 1 with $K=90$ (equimass discretization).}
		\label{tab:compare sampling probabilities}
		\begin{tabular}{c||ccr}
				\toprule
				Sampling Prob. & err\_obj & err\_sce & \multicolumn{1}{c}{T} \\\midrule
				Random & 0.4184 & 0.86 & 128.84\\
				$\gamma=0.1$ & 0.3098 & 0.69 & 120.16\\
				$\gamma=0.3$ & 0.1729 & 0.54 & 93.30 \\
				$\gamma=0.5$ & 0.1044 & 0.42 & 65.10\\
				$\gamma=0.7$ & 0.0693 & 0.39 & 48.13\\
				$\gamma=0.9$ & 0.0597 & 0.37 & 35.05\\
				$\gamma=0.99$ & 0.0525 & 0.36 & 21.76 \\
				$\gamma=0.999$ & 0.1118 & 0.34 & 5.66\\\bottomrule
		\end{tabular}
	\end{table}
	
	\par Though lacking theoretical justifications, the sampling probabilities incorporating information of previous iterates are found to yield lower errors within less CPU time than randomly generated ones. Increasing the value of $\gamma$ contributes to less CPU time for fulfilling the stopping criterion, yet worsening the accuracy once surpassing some threshold. We select $\gamma=0.99$ in the \seralm~and \sklalm~methods for a compromise between accuracy and efficiency in the subsequent experiments.
	
	\subsubsection{Comparisons among the {\sc S-klalm}~methods with different $\hat t$}
	
	\par We conduct numerical comparisons on system 1 in Table \ref{tab:system info} with equimass discretization and $K\in\{90,180,360,720\}$. We call the \sklalm~method with $\hat t\in\{0,5,10\}$. For each pair of $(K,\hat t)$, 10 random trials are performed. The stopping parameters are $tol=10^{-3}\times\sqrt{2}^{\log_2(K/90)}$\footnote{We increase $tol$ since the size of $\calU(\brho,\brho)$ decreases as $\calO(1/\sqrt{K})$; see Lemma \ref{lemma:diameter}.} and $t_{\max}=+\infty$. We depict the achieved err\_obj, err\_sce, and required T averaged over 10 trials for each pair of $(K,\hat t)$ in Figure \ref{fig:hat t not sensitive}.
		\begin{figure}[!t]
			\centering
			\includegraphics[width=\linewidth]{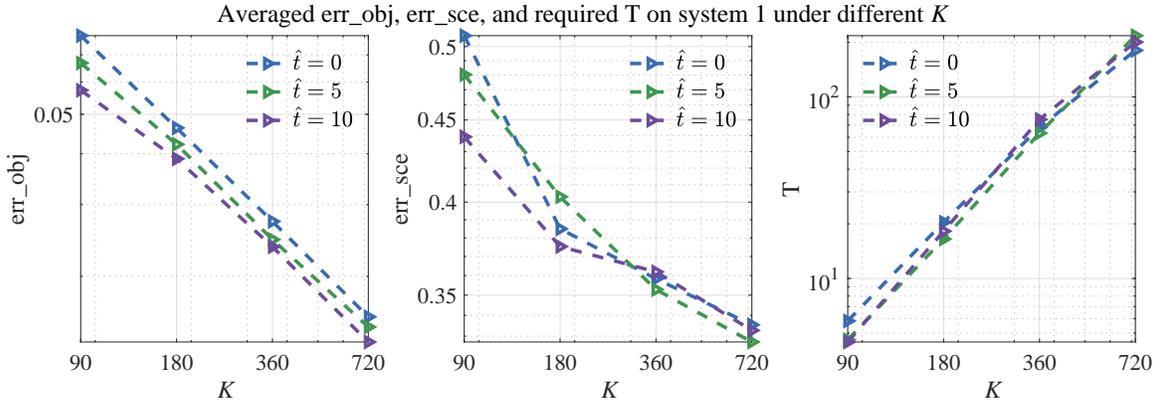}
			\caption{The achieved err\_obj, err\_sce, and required T averaged over 10 trials for each pair of $(K,\hat t)$ given by the \sklalm~method with different values of $\hat t$ on system 1 (equimass discretization). The blue, green, and purple dashed lines with right-pointing triangle markers are the results of the \sklalm~method with $\hat t=0,5,10$, respectively. Left: err\_obj. Middle: err\_sce. Right: T.}
			\label{fig:hat t not sensitive}
		\end{figure}
		
		\par From Figure \ref{fig:hat t not sensitive}, we observe that the \sklalm~method with a positive $\hat t$ does yield higher-quality solutions within comparable CPU time. But the advantage becomes less obvious as $K$ grows large. Since large-scale problems are common in the applications of interest and a positive $\hat t$ indicates the computations and storage of full matrices during the first iterations, we choose $\hat t=0$ in the \sklalm~method for the ensuing numerical experiments. 
	
	\subsubsection{Comparisons among the {\sc S-eralm}~and {\sc S-klalm}~methods}
	
	\par We first test the proposed four methods with $\sigma$ taking its value in $\{2^0,2^2,2^4,2^6,2^8\}$ on system 1 with $K=90$ and equimass discretization. For each value of $\sigma$, 10 random trials are performed. The stopping parameters are $tol=5\times10^{-3}/\sqrt{\sigma}$ and $t_{\max}=+\infty$. We record the achieved err\_obj, err\_sce and required T averaged over 10 random trials for each value of $\sigma$ in Figure \ref{fig:regproxparam}. 
		\begin{figure}[!t]
			\centering
			\includegraphics[width=\linewidth]{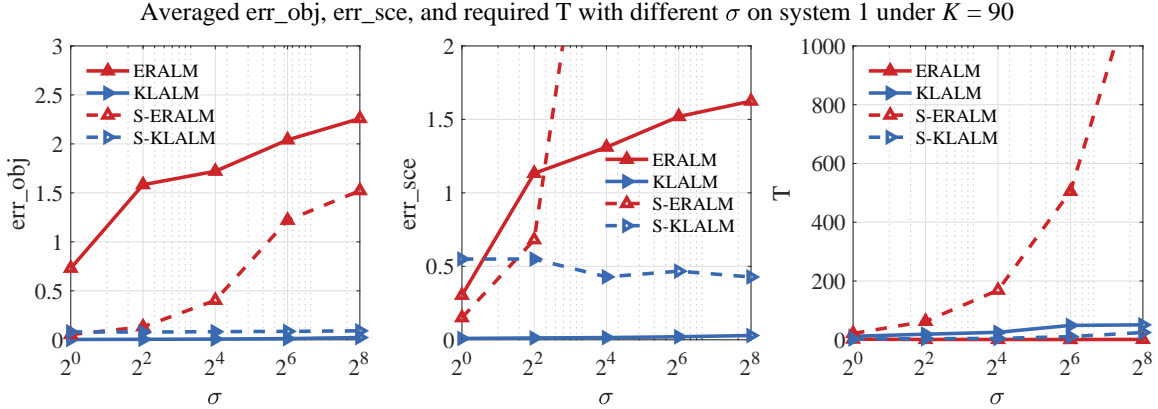}
			\caption{The achieved err\_obj, err\_sce, and required T averaged over 10 trials for each value of $\sigma$ given by the \eralm, \klalm, S-\eralm, and S-\klalm~methods on system 1 with $K=90$ (equimass discretization). The red solid and dashed lines with triangle markers represent the results of the \eralm~and S-\eralm~methods, respectively. The blue solid and dashed lines with right-pointing triangle markers represent the results of the \klalm~and S-\klalm~methods, respectively. Left: err\_obj. Middle: err\_sce. Right: T.}
			\label{fig:regproxparam}
		\end{figure}
		
		\par From Figure \ref{fig:regproxparam}, we observe that (i) the objective errors of the \eralm~and \seralm~methods rise quickly as $\sigma$ increases, which conforms to the theoretical results in section \ref{sec:convergence analyses}; (ii) the \klalm~and S-\klalm~methods yield high-quality solutions regardless of the choices of $\sigma$, demonstrating their robustness to the choices of proximal parameters. The robustness is practically desirable because a tiny $\sigma$, as is needed by the (S-)\eralm~methods to achieve high accuracy, can result in numerical underflow. 
		
		\par In the above settings, we also notice that the S-\eralm~method arrives at better solutions than those given by the S-\klalm~method when a small $\sigma$ is used. But we shall point out that the advantage no longer persists as $K$ increases. We test the S-\eralm~and S-\klalm~methods with $\sigma=1$ on system 1 with $K\in\{90,180,360,720\}$ (equimass discretization). For each value of $K$, 10 random trials are performed. The stopping parameters are $tol=10^{-3}\times\sqrt{2}^{\log_2(K/90)}$ and $t_{\max}=+\infty$. We depict the achieved err\_obj, err\_sce, and required T averaged over 10 random trials for each value of $K$ in Figure \ref{fig:exclude seralm}. 
		
		\begin{figure}[htb]
			\centering
			\includegraphics[width=\linewidth]{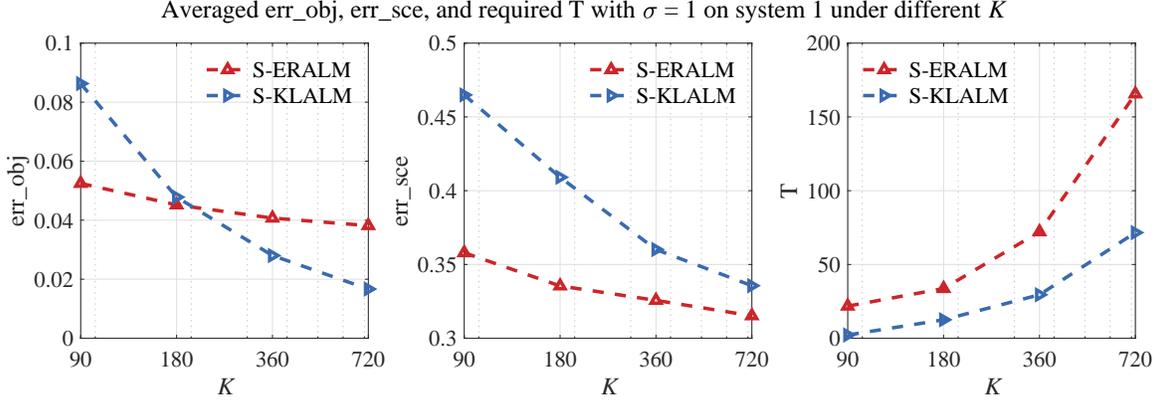}
			\caption{The achieved err\_obj, err\_sce, and required T averaged over 10 trials for each value of $K$ given by the S-\eralm~and S-\klalm~methods with $\sigma=1$ on system 1 (equimass discretization). The red dashed lines with triangle markers represent the results of the S-\eralm~method. The blue dashed lines with right-pointing triangle markers represent the results of the S-\klalm~method. Left: err\_obj. Middle: err\_sce. Right: T.}
			\label{fig:exclude seralm}
		\end{figure}

		\par From Figure \ref{fig:exclude seralm}, we conclude that the advantage of the S-\eralm~method over the S-\klalm~method in terms of accuracy disappears as the problem size increases, along with fast growing computational time. This is in line with Table \ref{tab:complexities}, i.e., the matrix entrywise sampling consumes quadratic complexity in each iteration of the S-\eralm~method.
		
		\par Finally, we compare the \klalm~and S-\klalm~methods. Aiming at a problem restricted on the sampled support, it is impractical for the S-\klalm~method to outperform the \klalm~method in accuracy. However, if the computational budget is limited, the S-\klalm~method can yield relatively high-quality solutions within much less CPU time, particularly when the problem size goes large. We demonstrate this point by numerical comparisons on system 1. We employ equimass discretization with $K\in\{90,180,360,720,1440,2880\}$. We call the \klalm~and S-\klalm~methods for each value of $K$ with 10 random trials. The stopping parameters are $tol=10^{-3}\times\sqrt{2}^{\log_2(K/90)}$ and $t_{\max}=+\infty$. We depict the convergence curves of err\_obj along with the CPU time averaged over 10 trials for each value of $K$ in Figure \ref{fig:1D cos Tinter}. 
		\begin{figure}[!t]
			\centering
			\includegraphics[width=\linewidth]{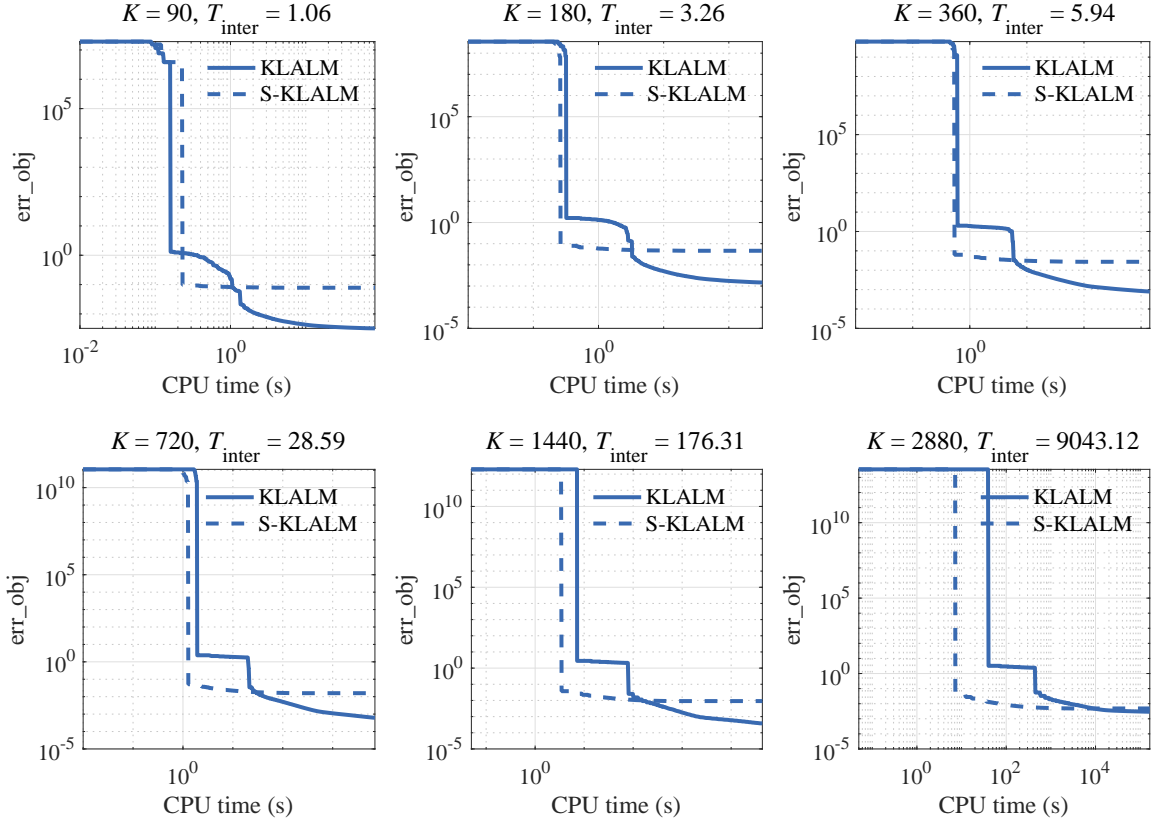}
			\caption{The convergence curves of err\_obj along with the CPU time averaged over 10 trials for each value of $K$ given by the \klalm~and S-\klalm~methods on system 1 (equimass discretization). The blue solid and dashed lines stand for the results of the \klalm~and S-\klalm~methods, respectively. The notation $T_{\inter}$ refers to the CPU time where the curves of two methods intersect for the last time.}
			\label{fig:1D cos Tinter}
		\end{figure}
		The notation $T_{\inter}$ refers to the CPU time where the curves of two methods intersect for the last time.
		
		\par From Figure \ref{fig:1D cos Tinter}, we observe that the \klalm~method attains worse accuracy than the S-\klalm~method until the CPU time touches $T_{\inter}$. Moreover, as we increase the value of $K$, $T_{\inter}$ grows at a cubic rate; see Figure \ref{fig:fitted 1D cos Tinter} for an illustration of cubic polynomial fitting.
		\begin{figure}[!t]
			\centering
			\includegraphics[width=.5\linewidth]{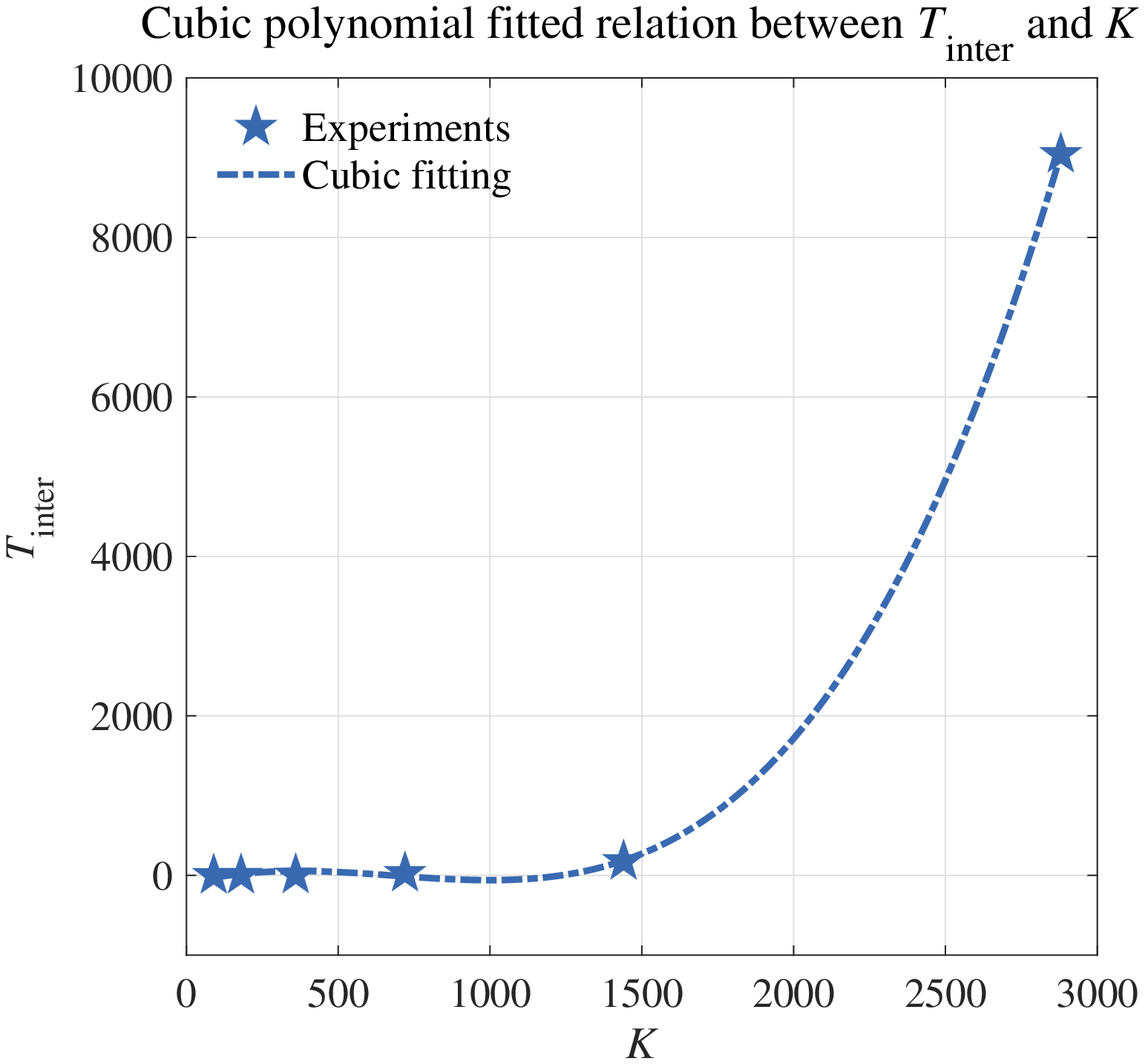}
			\caption{Cubic polynomial fitted relation between $T_{\inter}$ and $K$ on system 1 (equimass discretization). The fitted polynomial is $9.09\times10^{-7}K^3-1.86\times10^{-3}K^2+1.00K-106.38$. The blue pentagons are the obtained $T_{\inter}$ in experiments and the blue dashdotted line is the fitted relation.}
			\label{fig:fitted 1D cos Tinter}
		\end{figure}
		These results imply that (i) if the problem size is small (e.g., of order $10^2$), the \klalm~method achieves high accuracy within acceptable CPU time; (ii) if the problem size is relatively large (e.g., of order $10^3$ or higher), as is usually the case in the applications of interest, and the computational budget is limited, the S-\klalm~method is more preferable from the practical perspective.
		
		\par Based on the above numerical findings, we choose the \klalm~and \sklalm~methods for the comparisons with the \palm~method as well as the simulations of the 2D and 3D systems in section \ref{subsec:large-scale simulations}.
		
	\subsubsection{Comparisons among the {\sc palm}, {\sc klalm}, and {\sc S-klalm}~methods}
	
	\par We perform numerical comparisons on the 1D systems in Table \ref{tab:system info} with equimass discretization. The implementation of the \palm~method follows \cite{hu2023global}, except that we tune the proximal parameters adaptively as in equation \eqref{eqn:proximal parameter} for fair comparisons. For systems 1 and 2, we consider $K\in\{90,180,360,720\}$. For each value of $K$, 10 random trials are performed. The stopping parameters are $tol=10^{-3}\times\sqrt{2}^{\log_2(K/90)}$ and $t_{\max}=+\infty$. For systems 3 and 4, we consider $K\in\{140,280,560,1120\}$. For each value of $K$, 10 random trials are performed. The stopping parameters are $tol=10^{-3}\times\sqrt{2}^{\log_2(K/140)}$ and $t_{\max}=+\infty$.
		The achieved err\_obj, err\_sce, and required T averaged over 10 random trials given by the three methods are gathered in Figures \ref{fig:local compare N3} and \ref{fig:local compare N7}.
		\begin{figure}[!t]
			\centering
			\subfloat[System 1]{\includegraphics[width=\linewidth]{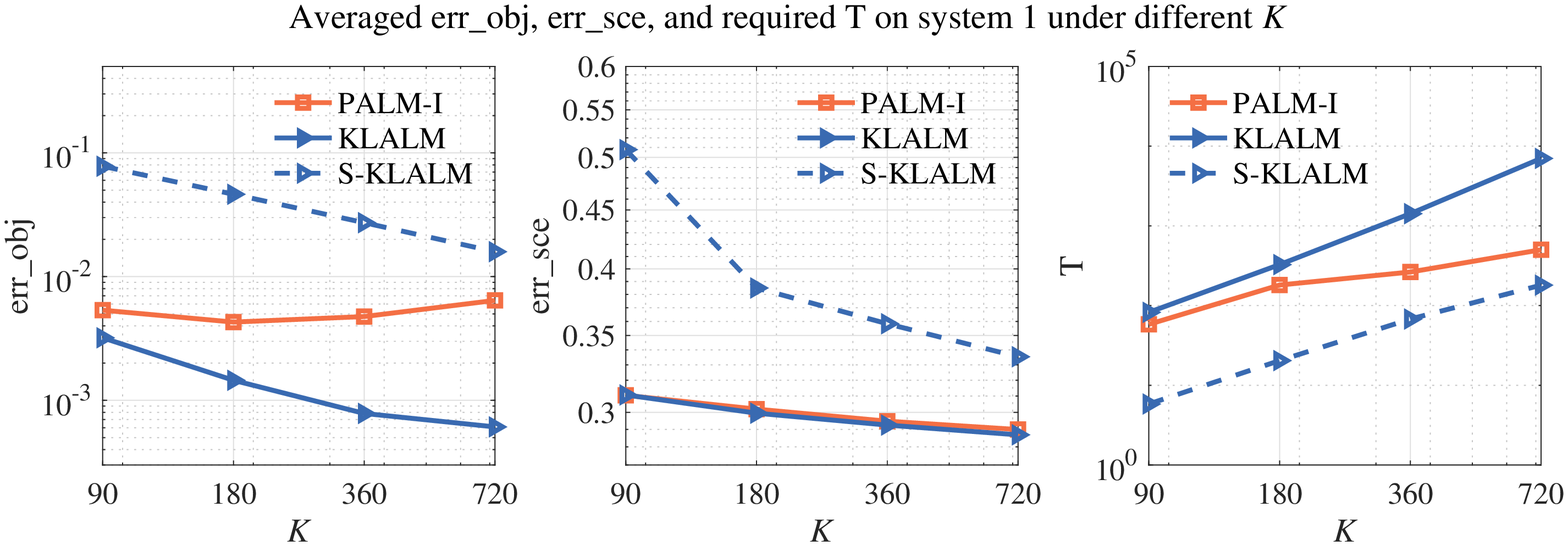}}\\
			\subfloat[System 2]{\includegraphics[width=\linewidth]{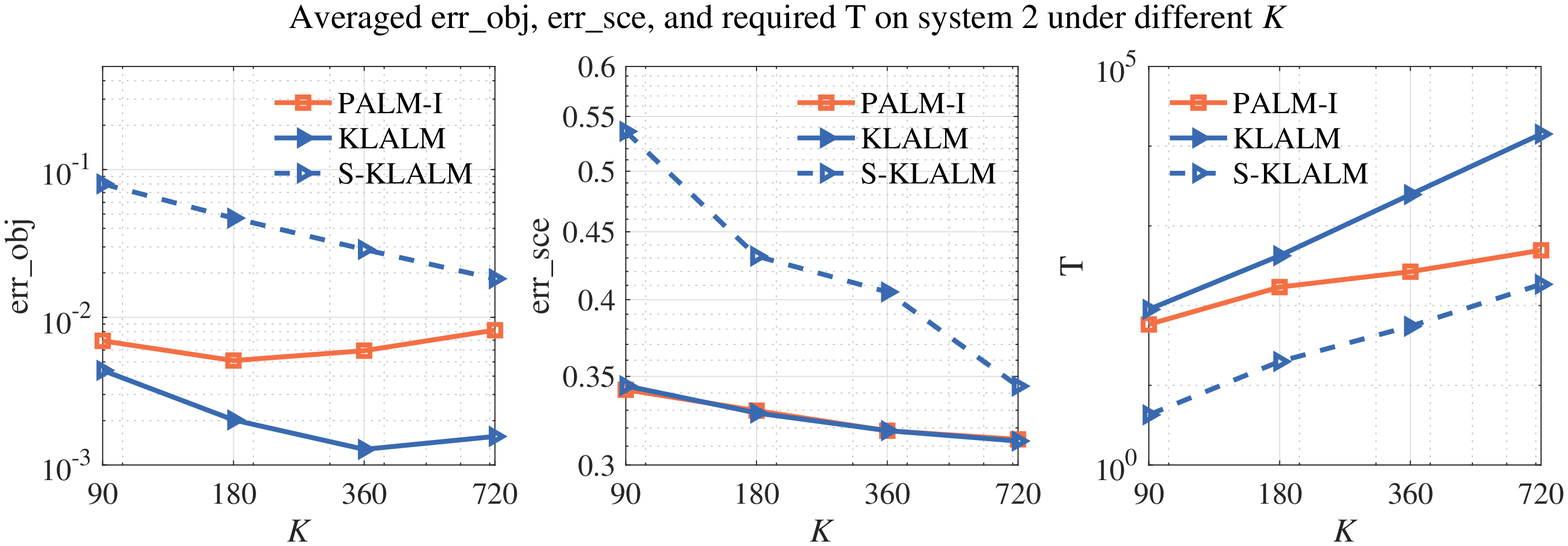}}
			\caption{The achieved err\_obj, err\_sce, and required T averaged over 10 trials for each value of $K$ given by the \palm, \klalm,~and \sklalm~methods on the three-electron 1D systems (equimass discretization). The orange solid lines with square markers are the results of the \palm~method. The blue solid and dashed lines with right-pointing triangle markers are the results of the \klalm~and \sklalm~methods, respectively. From left to right: err\_obj, err\_sce, and T. (a) System 1. (b) System 2.}
			\label{fig:local compare N3}
		\end{figure}
		\begin{figure}[!t]
			\centering
			\subfloat[System 3]{\includegraphics[width=\linewidth]{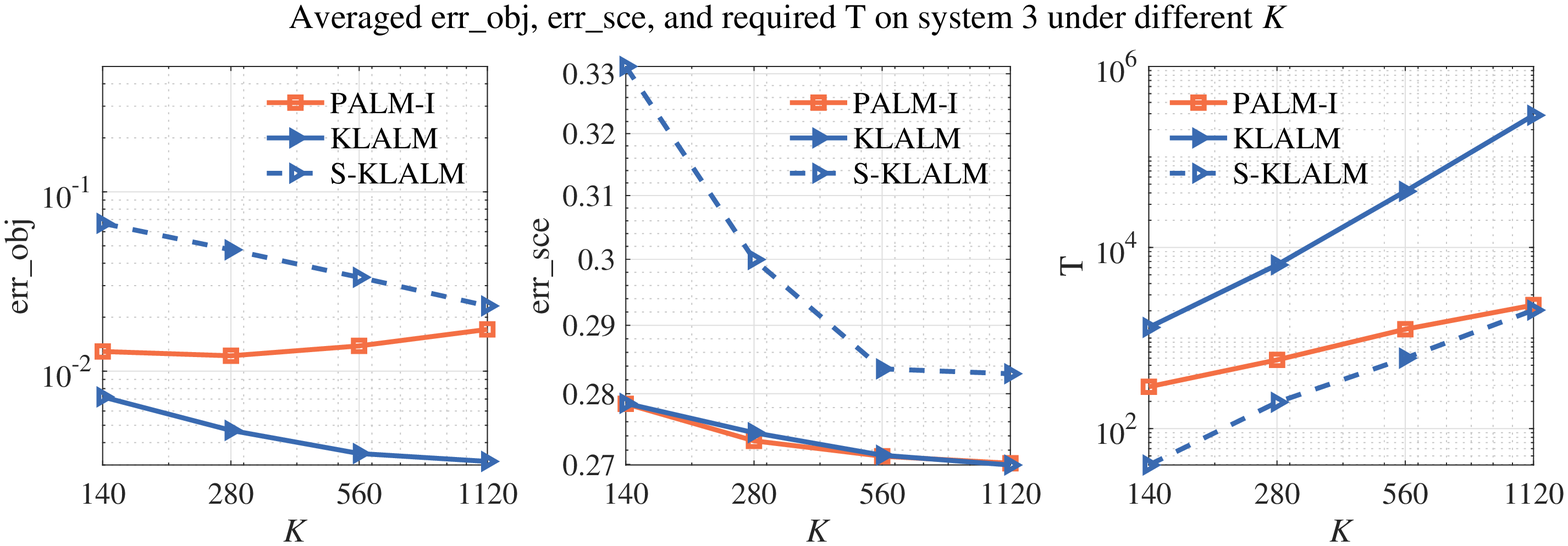}}\\
			\subfloat[System 4]{\includegraphics[width=\linewidth]{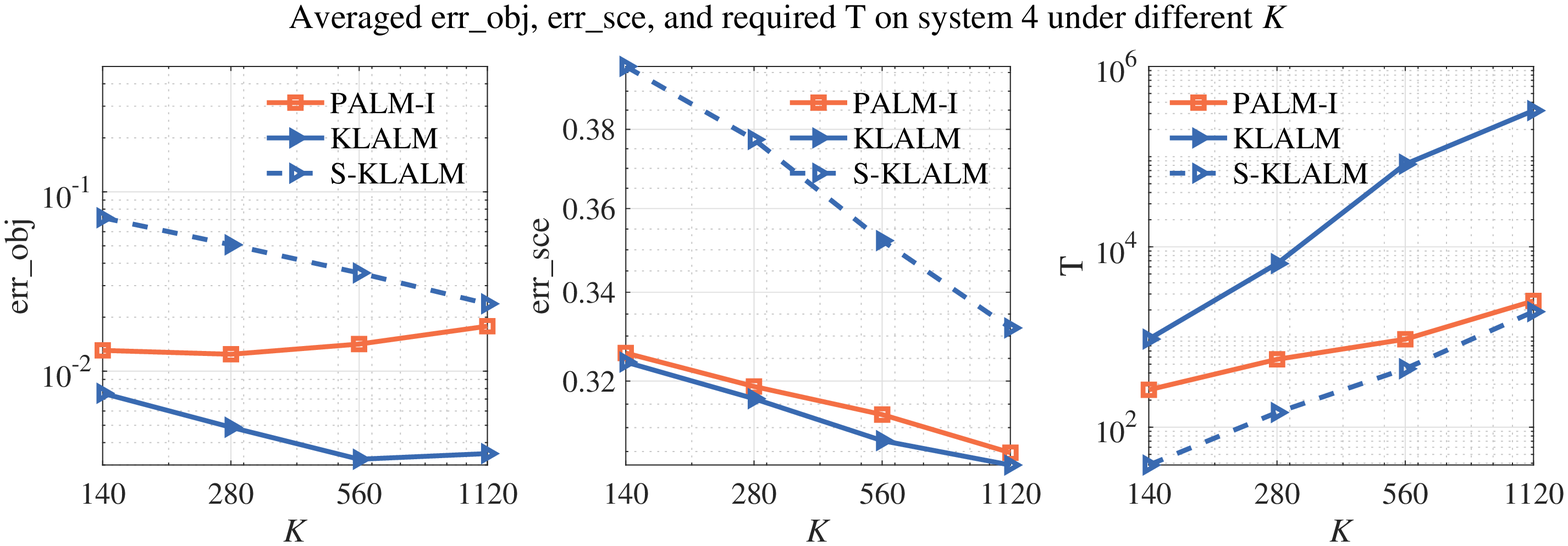}}
			\caption{The achieved err\_obj, err\_sce, and required T averaged over 10 trials for each value of $K$ given by the \palm, \klalm,~and \sklalm~methods on the seven-electron 1D systems (equimass discretization). The orange solid lines with square markers are the results of the \palm~method. The blue solid and dashed lines with right-pointing triangle markers are the results of the \klalm~and \sklalm~methods, respectively. From left to right: err\_obj, err\_sce, and T. (a) System 3. (b) System 4.}
			\label{fig:local compare N7}
		\end{figure}
		
		\par Under the same settings of proximal and stopping parameters, the \palm~method suffers from premature convergence, while the \klalm~method yields much smaller objective errors across the 1D systems. The multiplicative expression \eqref{eqn:optimal solution bpalm subproblem} further enables the use of randomized matrix sparsification, leading to the \sklalm~method with better scalability.
		
	
	\subsection{A cascadic multigrid optimization framework}
	
	\par Before the simulations of 2D and 3D systems, we introduce a cascadic multigrid (\cmg) optimization framework for problem \eqref{eqn:MPGCC l1}. The \cmg~optimization framework has demonstrated its power in accelerating the numerical solution of large-scale scientific problems, especially those with governing partial differential equations; see, e.g., \cite{borzi2008multigrid,chen2017efficient,xia2018cascadic,liu2021multilevel}. In general, the framework begins with a coarse mesh, with which a small-scale problem is associated. An accurate solver is then called to solve the small-scale problem and yields a solution. Later, it basically repeats the following three steps: (i) refine the previous coarse mesh to a finer one; (ii) prolongate the previous solution and construct an initial point over the current mesh; (iii) start a local solver from the initial point and obtain a solution over the current mesh. With carefully designed prolongation operator and local solver, the framework can fully utilize the solution information in the coarser meshes and considerably accelerate the numerical solution of the large-scale problems over the finer meshes. 
		
		\par In our context, we call accurate solvers, such as the \klalm~method, to tackle the problems over the initial coarse meshes. After uniform or adaptive mesh refinements, we prolongate the previous solution (say, $\big(Y_2^{(\ell-1,\star)},\ldots,Y_{N_e}^{(\ell-1,\star)}\big)$) to the current mesh (say, $\{e_k^{(\ell)}\}_{k=1}^{K^{(\ell)}}$) as follows:
		\begin{equation}
			y_{i,kl}^{(\ell,0)}:=\frac{1}{K_{k'l'}^{(\ell)}}y_{i,k'l'}^{(\ell-1,\star)},\quad k,l=1,\ldots,K^{(\ell)},~~i=2,\ldots,N_e,
			\label{eqn:prolongation operator}
		\end{equation}
		where $k'$, $l'\in\{1,\ldots,K^{(\ell-1)}\}$ are such that $e_{k}^{(\ell)}\subseteq e_{k'}^{(\ell-1)}$, $e_l^{(\ell)}\subseteq e_{l'}^{(\ell-1)}$, and 
		$$K_{k'l'}^{(\ell)}:=\abs{\lrbrace{(k,l):e_k^{(\ell)}\subseteq e_{k'}^{(\ell-1)},~e_l^{(\ell)}\subseteq e_{l'}^{(\ell-1)},~k,l\in\{1,\ldots,K^{(\ell)}\}}}.$$
		The design of the prolongation operator follows that $\rho\gamma_i$ can be understood as a joint probability density between the positions of the first and $i$th electrons ($i=2,\ldots,N_e$). Finally, cheap local solvers, such as the S-\klalm~method, start from $\big(Y_2^{(\ell,0)},\ldots,Y_{N_e}^{(\ell,0)}\big)$ and solve the problem over the current mesh. Consequently, apart from the previously mentioned warm starts, the prolongation operator also enables ``warm samplings''; please refer to the importance sampling probability \eqref{eqn:subsampling probability}. We summarize our \cmg~optimization framework in Framework \ref{frame:gr}.
		\begin{algorithm}[!t]
			\floatname{algorithm}{Framework}
			\caption{The \cmg~optimization framework for problem \eqref{eqn:MPGCC l1}.}
			\label{frame:gr}
			\begin{algorithmic}[1]
				\REQUIRE{Discretization and refinement oracles, accurate and cheap local solvers, initial number of finite elements $K^{(0)}\in\N$.} 
				\STATE{Set $\ell:=0$.}
				\WHILE{\textit{certain conditions are not satisfied}}
				\IF{$\ell=0$}
				\STATE{\textbf{Discretization:} discretize the MMOT into problem \eqref{eqn:MPGCC l1} with $K^{(0)}$ finite elements $\{e_k^{(0)}\}_{k=1}^{K^{(0)}}\subseteq\R^d$.}
				\STATE{Construct a random initial point $(Y_2^{(0,0)},\ldots,Y_{N_e}^{(0,0)})\in(\R^{K^{(\ell)}\times K^{(\ell)}})^{N_e-1}$.}
				\STATE{\textbf{Local solution: }start the accurate local solver from $(Y_2^{(0,0)},\ldots,Y_{N_e}^{(0,0)})$ for problem \eqref{eqn:MPGCC l1} at level 0 and obtain $(Y_2^{(0,\star)},\ldots,Y_{N_e}^{(0,\star)})$.}
				\ELSE
				\STATE{\textbf{Grid refinement:} refine $\{e_k^{(\ell-1)}\}_{k=1}^{K^{(\ell-1)}}$ to $\{e_k^{(\ell)}\}_{k=1}^{K^{(\ell)}}\subseteq\R^d$ with $K^{(\ell)}\in\N$.}
				\STATE{\textbf{Prolongation:} prolongate $(Y_2^{(\ell-1,\star)},\ldots,Y_{N_e}^{(\ell-1,\star)})$ to the current mesh through equation \eqref{eqn:prolongation operator} and obtain $(Y_2^{(\ell,0)},\ldots,Y_{N_e}^{(\ell,0)})\in(\R^{K^{(\ell)}\times K^{(\ell)}})^{N_e-1}$.}
				\STATE{\textbf{Local solution:} start the cheap local solver from $(Y_2^{(\ell,0)},\ldots,Y_{N_e}^{(\ell,0)})$ for problem \eqref{eqn:MPGCC l1} at level $\ell$ and obtain $(Y_2^{(\ell,\star)},\ldots,Y_{N_e}^{(\ell,\star)})$.}
				\ENDIF
				\STATE{Set $\ell:=\ell+1$.}
				\ENDWHILE
				\ENSURE{Approximate solution $(Y_2^{(\ell-1,\star)},\ldots,Y_{N_e}^{(\ell-1,\star)})$.}
			\end{algorithmic}
		\end{algorithm}
		
		\par To demonstrate the utility of Framework \ref{frame:gr}, we include numerical experiments on the 1D systems. We call the \cmg~optimization method, which takes respectively the \klalm~and \sklalm~methods as the accurate and cheap local solvers, the S-\klalm-\cmg~method. We compare the numerical performances of the \klalm, \sklalm,~and S-\klalm-\cmg~methods. For systems 1 and 2, the \klalm~and S-\klalm~methods directly solve the problems with $K=720$ and equimass discretization starting from 10 random trials. The stopping parameters are $tol=2\sqrt{2}\times10^{-3}$ and $t_{\max}=+\infty$. The S-\klalm-\cmg~method starts 10 random trials from $K^{(0)}=90$ with equimass discretization and reaches the desired the mesh after equimass refinements for three times. The stopping parameters at level $\ell\in\{0,1,2,3\}$ are $tol=10^{-3}\times\sqrt{2}^{\log_2(K^{(\ell)}/K^{(0)})}$ and $t_{\max}=+\infty$. The settings for the simulations of systems 3 and 4 are analogous, except that the problem size handled by the \klalm~and \sklalm~methods is $K=1120$ and the initial problem size handled by the S-\klalm-\cmg~method is $K^{(0)}=140$. We report the achieved err\_obj, err\_sce, and required T averaged over 10 random trials given by the three methods on the 1D systems in Table \ref{tab:GR is useful}. 
		\begin{table}[!t]
			\centering
			\caption{The achieved err\_obj, err\_sce, and required T averaged over 10 trials given by the \klalm, S-\klalm, and S-\klalm-\cmg~methods on the 1D systems (equimass discretization).\\}
			\label{tab:GR is useful}
			\begin{tabular}{c|ccr||ccr}
					\toprule
					\multirow{2}{*}{Algorithms} & \multicolumn{3}{c||}{System 1 ($K=720$)} & \multicolumn{3}{c}{System 2 ($K=720$)}\\\cmidrule{2-7}
					& err\_obj & err\_sce & \multicolumn{1}{c||}{T} & err\_obj & err\_sce & \multicolumn{1}{c}{T} \\\midrule
					\klalm~& 0.0006 & 0.29 & 7014.57 & 0.0016 & 0.31 & 14190.02 \\
					S-\klalm~& 0.0159 & 0.34 & 180.51 & 0.0183 & 0.34 & 184.96\\\midrule
					S-\klalm-\cmg & 0.0032 & 0.34 & 96.32 & 0.0044 & 0.38 & 109.39 \\\midrule\midrule
					
					\multirow{2}{*}{Algorithms} & \multicolumn{3}{c||}{System 3 ($K=1120$)} & \multicolumn{3}{c}{System 4 ($K=1120$)}\\\cmidrule{2-7}
					& err\_obj & err\_sce & \multicolumn{1}{c||}{T} & err\_obj & err\_sce & \multicolumn{1}{c}{T} \\\midrule
					\klalm~& 0.0031 & 0.27 & 288688.75 & 0.0035 & 0.30 & 323734.12 \\
					S-\klalm~& 0.0231 & 0.28 & 2034.49 & 0.0238 & 0.33 & 1910.36 \\\midrule
					S-\klalm-\cmg & 0.0074 & 0.32 & 1204.31 & 0.0079 & 0.36 & 1252.50 \\
					\bottomrule
			\end{tabular}
		\end{table}
		The S-\klalm-\cmg~method is found to yield relatively high-quality solutions within the least CPU time.

	\subsection{Simulations on 2D and 3D systems}\label{subsec:large-scale simulations}
	
	We use the \sklalmgr~method for simulating the $2$D and $3$D systems in Table \ref{tab:system info}. The initial step employs equisize discretization and the latter ones refine the meshes uniformly. For the $2$D systems, the initial mesh size is $K^{(0)}=900$ before truncation. After grid refinements for three times, we get $K^{(3)}=57600$ before truncation. For the $3$D system 7, we set $K^{(0)}=1728$ before truncation and perform grid refinements twice to arrive at $K^{(2)}=110592$ before truncation. For the $3$D system 8, we set $K^{(0)}=1000$ before truncation and perform grid refinements twice to arrive at $K^{(2)}=64000$ before truncation. The stopping parameters are $t_{\max}=10^4$ and
	$$tol=\left\{\begin{array}{ll}
		5\times10^{-3}, & \ell=0,\\
		10^{-2}\times(\sqrt{2^d})^{\log_2(K/K^{(0)})}, & \ell>0.
	\end{array}\right.$$
	Note that for these systems, no explicit constructions of the optimal solutions are available. Hence, we monitor instead the evolution of objective values and the approximate SCE potentials. Moreover, following \cite{hu2023global}, we approximate the OT maps between electron positions in the so-called Monge ansatz \cite{seidl1999strictly,seidl1999strong,seidl2000simulation}, which are of particular physical interest, as 
	\begin{equation*}
		\scrT_i(\mb{d}_j):=\sum_{1\le k\le K}y_{i,jk}\mb{d}_k/\varrho_j,~j=1,\ldots,K,~i=2,\ldots,N_e,
	\end{equation*}
	where $\scrT_i:\R^d\to\R^d$ represents the transport map between the positions of the first and $i$th electrons. We collect the results in Table \ref{tab:2D/3D obj} as well as Figures \ref{fig:sce potential 2D/3D} and \ref{fig:approximate transport maps 2D/3D}, where $K_{\mathrm{trunc}}\in\N$ refers to the dimension of the truncated $\brho$. 
	\begin{table}[!t]
		\centering
		\caption{The objective values calculated along the iteration of the \sklalmgr~method when solving the $2$D and $3$D systems. The notation $K_{\mathrm{trunc}}\in\N$ is the dimension of the truncated $\brho$.}
		\label{tab:2D/3D obj}
		\begin{tabular}{c|rr|c||rr|c}
			\toprule
			\multirow{2}{*}{Step} & \multicolumn{3}{c||}{System 5} & \multicolumn{3}{c}{System 6} \\\cmidrule{2-7}
			& $K$ & $K_{\mathrm{trunc}}$ & obj & $K$ & $K_{\mathrm{trunc}}$ & obj \\\midrule
			
			0 & 900 & 424 & 1.1339 & 900 & 408 & 3.0690 \\
			
			1 & 3600 & 1622 & 1.1337 & 3600 & 1534 & 3.0690 \\
			
			2 & 14400 & 6410 & 1.1335 & 14400 & 6068 & 3.0677 \\
			
			3 & 57600 & 25562 & 1.1334 & 57600 & 24176 & 3.0667 \\
			
			\midrule\midrule
			
			\multirow{2}{*}{Step} & \multicolumn{3}{c||}{System 7} & \multicolumn{3}{c}{System 8} \\\cmidrule{2-7}
			& $K$ & $K_{\mathrm{trunc}}$ & obj & $K$ & $K_{\mathrm{trunc}}$ & obj \\\midrule
			
			0 & 1728 & 780 & 1.0202 & 1000 & 720 & 4.6193 \\
			
			1 & 13824 & 5628 & 1.0209 & 8000 & 5272 & 4.6716 \\
			
			2 & 110592 & 42936 & 1.0209 & 64000 & 40764 & 4.6833 \\
			
			\bottomrule
			
		\end{tabular}
	\end{table}
	\begin{figure}[!t]
		\centering
		\subfloat[System 5]{\includegraphics[width=\linewidth]{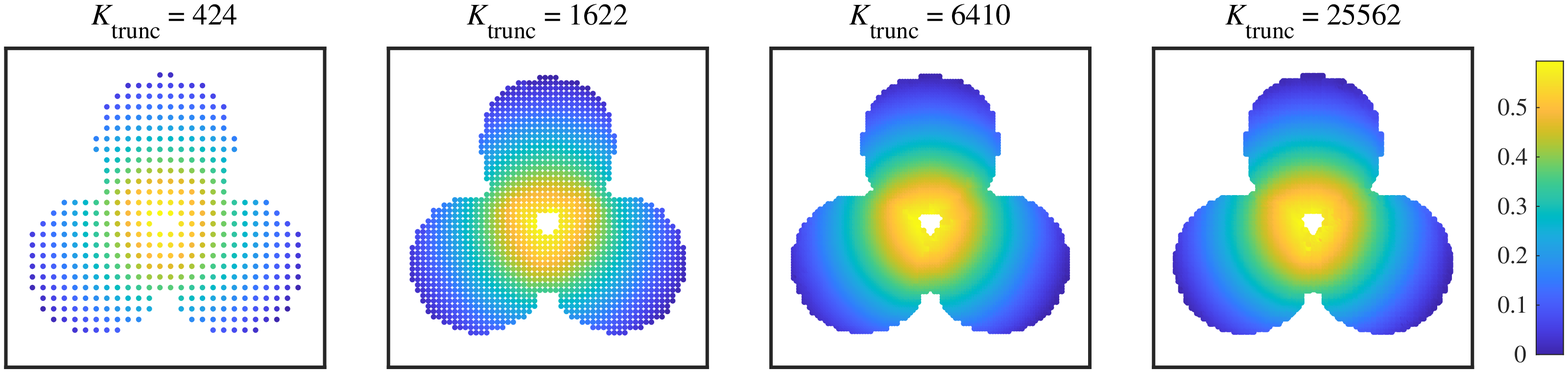}}\\
		\subfloat[System 6]{\includegraphics[width=\linewidth]{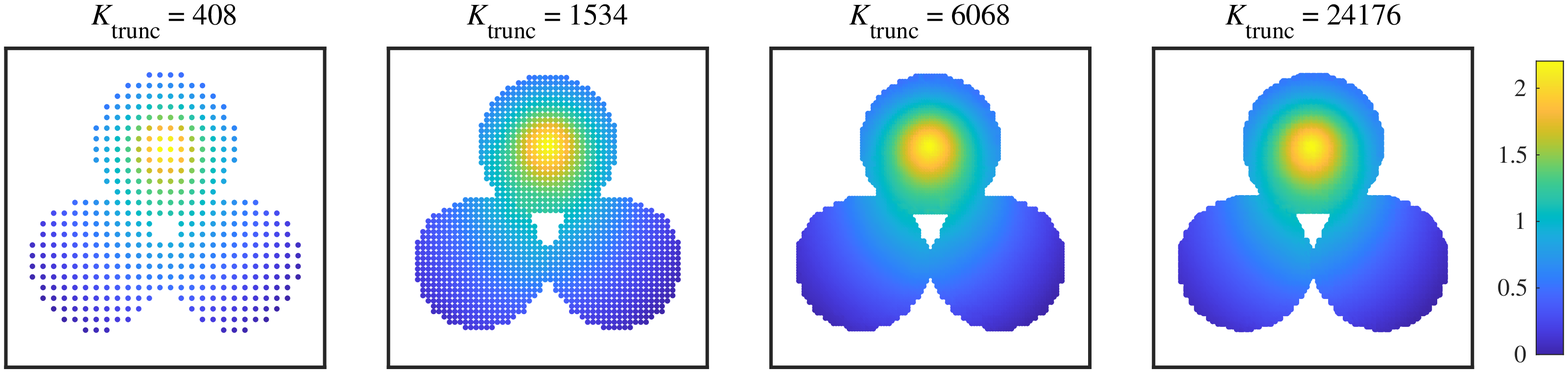}}\\
		\subfloat[System 7]{\includegraphics[width=\linewidth]{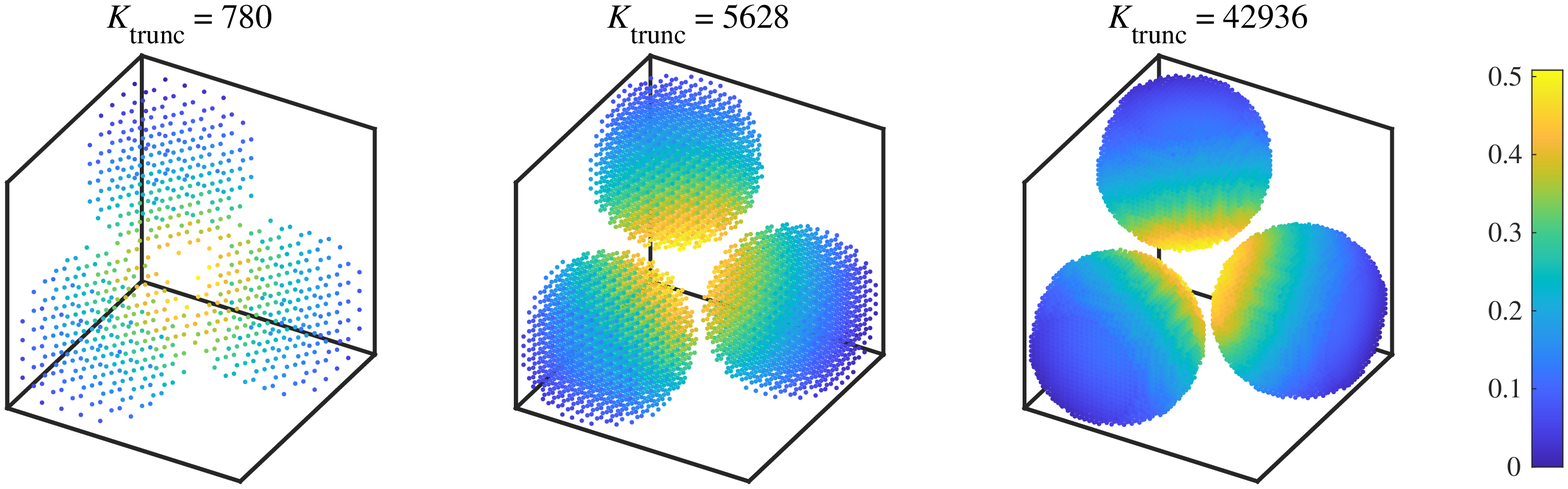}}\\
		\subfloat[System 8]{\includegraphics[width=\linewidth]{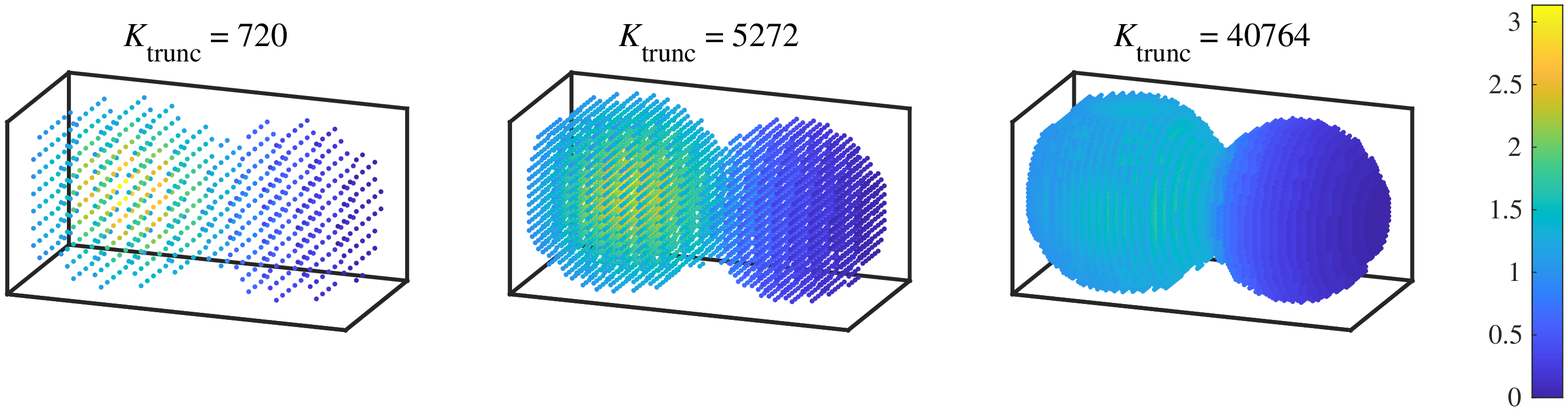}}
		\caption{The evolution of the approximate SCE potentials for the $2$D and $3$D systems given by the \sklalmgr~method. (a) System $5$. (b) System $6$.(c) System $7$. (d) System $8$.}
		\label{fig:sce potential 2D/3D}
	\end{figure}
	\begin{figure}[!t]
		\centering
		\subfloat[System 5]{\includegraphics[width=.48\linewidth]{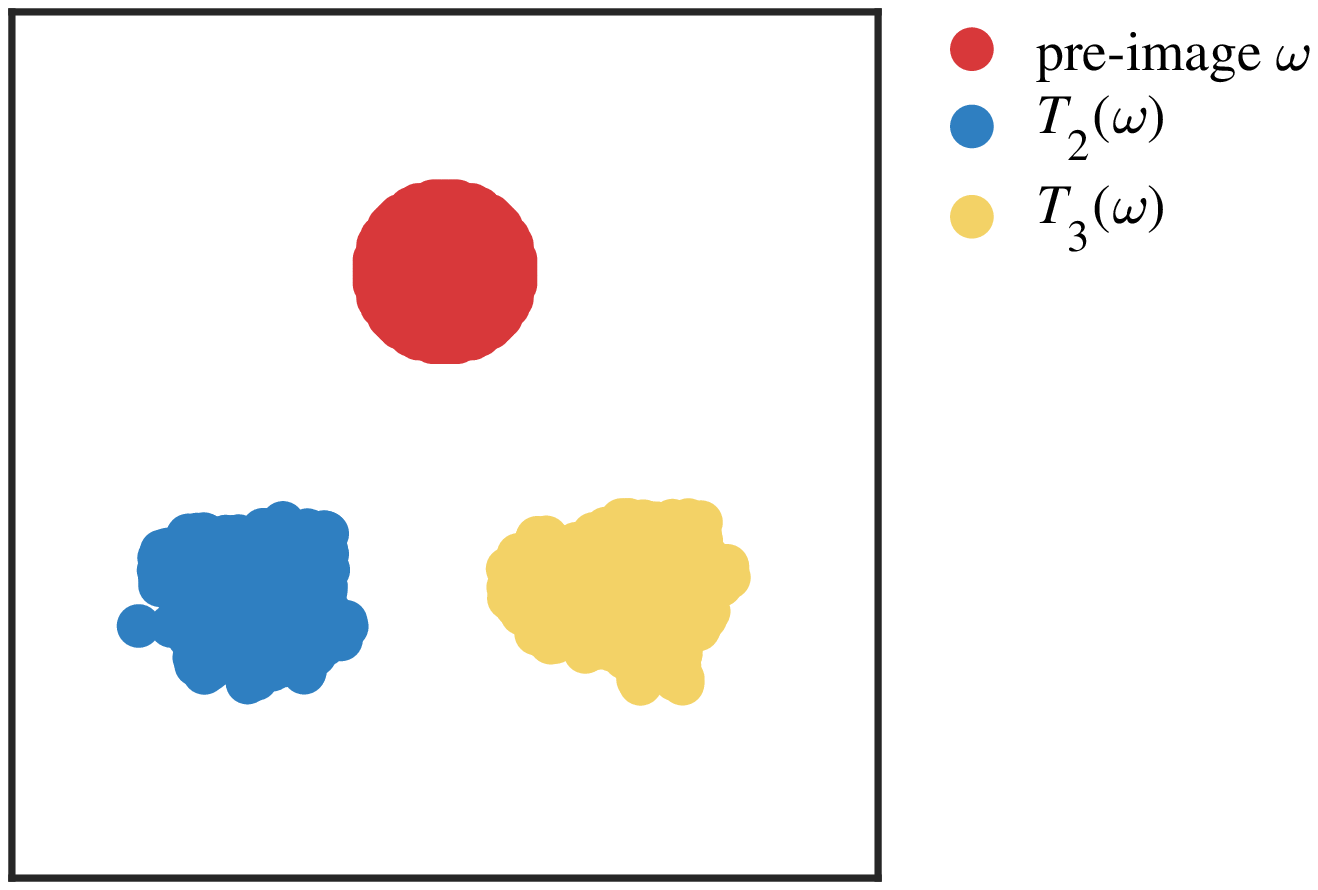}}\quad
		\subfloat[System 6]{\includegraphics[width=.48\linewidth]{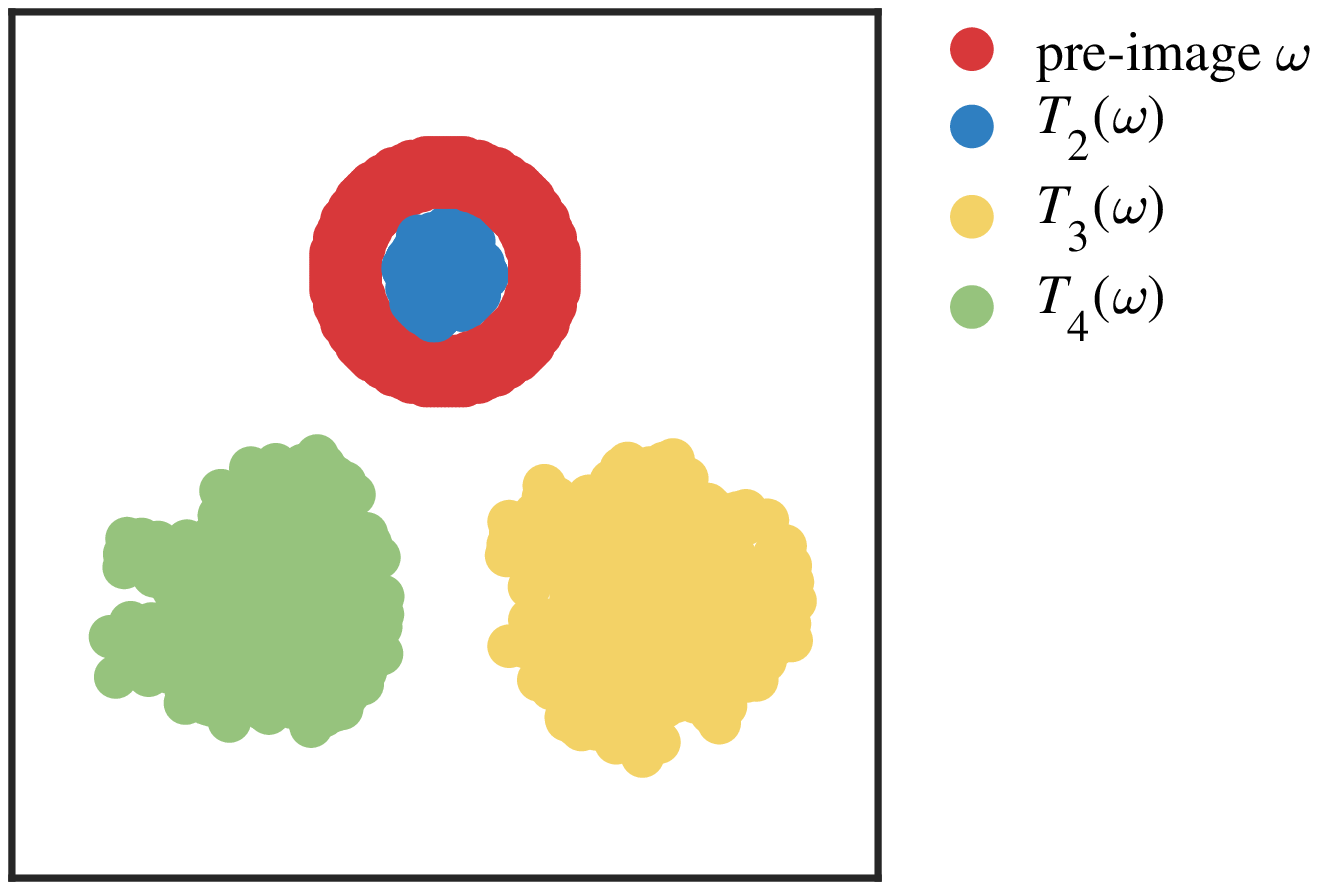}}\\[0.1cm]
		\subfloat[System 7]{\includegraphics[width=.48\linewidth]{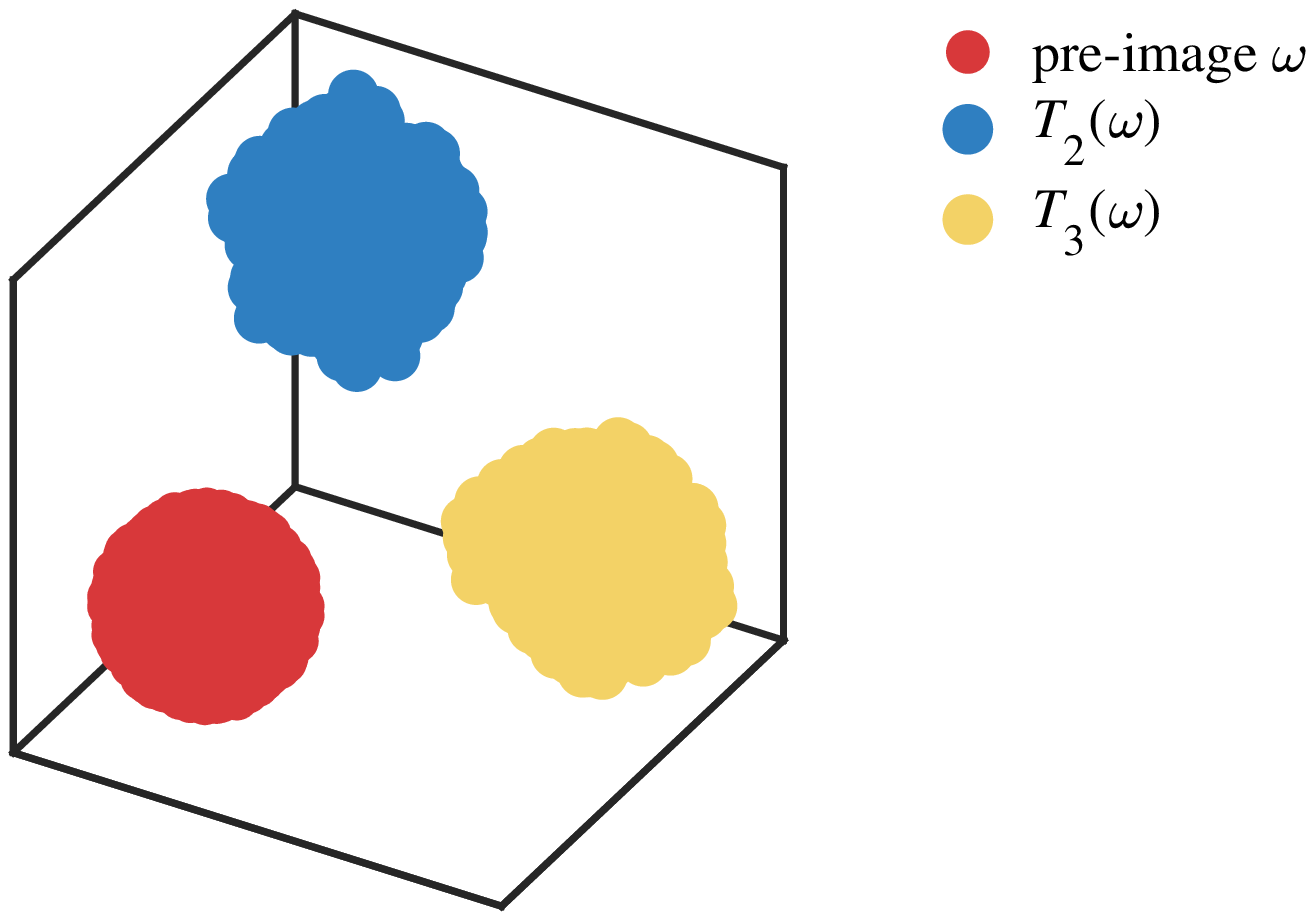}}\quad
		\subfloat[System 8]{\includegraphics[width=.48\linewidth]{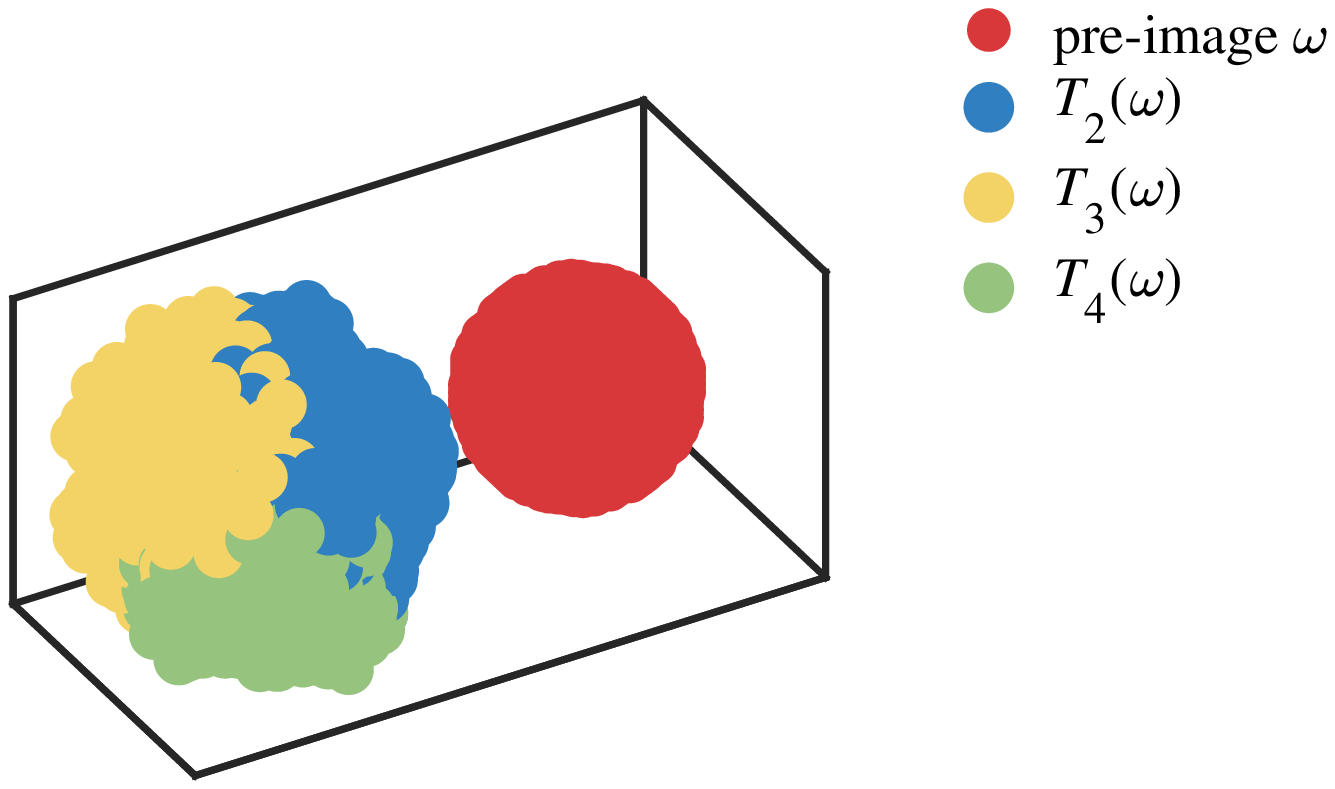}}
		\caption{The approximate OT maps between electron positions for the $2$D and $3$D systems given by the \sklalmgr~method. The pre-image $\omega$ in red stands for the positions of the first electron, while the areas in other colors are the associated positions of the other electrons, respectively, determined by $\{\scrT_i\}_{i=2}^{N_e}$. (a) System $5$. (b) System $6$. (c) System $7$. (d) System $8$.}
		\label{fig:approximate transport maps 2D/3D}
	\end{figure}
	We illustrate the approximate OT maps $\{\scrT_i\}_{i=2}^{N_e}$ through their images of the barycenters of the finite elements within some given subregion $\omega\subseteq\Omega$.
	
	\par The results in Table \ref{tab:2D/3D obj} and Figure \ref{fig:sce potential 2D/3D} showcase the convergence of the \cmg~optimization framework, also manifesting the effectiveness of the prolongation operator therein. The approximate OT maps in Figure \ref{fig:approximate transport maps 2D/3D} are in line with physical intuitions. In particular, for the three-Gaussian $2$D system 5, Figure \ref{fig:approximate transport maps 2D/3D} (a) implies that if the first electron is around one Gaussian center, the other two electrons will go near the other two Gaussian centers, respectively. For the four-Gaussian $2$D system 6, Figure \ref{fig:approximate transport maps 2D/3D} (b) shows that if one electron revolves around but stays away from the top Gaussian center, the other three electrons will stay near the three Gaussian centers, respectively, with one of which surrounded by the first one but keeping away from each other. For the three-Gaussian $3$D system 7, the results depicted in Figure \ref{fig:approximate transport maps 2D/3D} (c) are analogous to those in Figure \ref{fig:approximate transport maps 2D/3D} (a). For the two-Gaussian $3$D system 8, Figure \ref{fig:approximate transport maps 2D/3D} (d) indicates that if one electron lies around the Gaussian center $[1,0,0]^\T$, the other three will be located around the other center, with their positions trisecting a sphere. In Figure \ref{fig:approximate transport maps 2D/3D}, the positions of the electrons determined by the approximate OT maps $\{\scrT_i\}_{i=2}^{N_e}$ conform to the repulsive law and are improved along our optimization process. Notably in Figure \ref{fig:approximate transport maps 2D/3D}, we provide the first visualization of the approximate OT maps between electron positions in $3$D contexts. 

	
	
	\begin{remark}
		There have been extensive works dedicated to the numerical solutions of the MMOT \eqref{eqn:MMOT}. 
			The authors of \cite{mendl2013kantorovich} consider the Kantorovich dual of the MMOT, whose number of inequality constraints increases exponentially with $N_e$. 
			The authors of \cite{benamou2016numerical} investigate the entropy regularized MMOT, where curse of dimensionality still resides. 
			The authors of \cite{khoo2019convex} derive a convex semidefinite programming relaxation for the discretized $N$-representability form of the MMOT, where the problem size is independent from the value of $N_e$ but the gap induced by the convex relaxation remains elusive. 
			The authors of \cite{alfonsi2022constrained} exploit a moment-constrained relaxation of the MMOT, which admits sparse optimal solutions but entails careful selections of parameters and test functions to achieve satisfactory approximations. 
			In both \cite{chen2014numerical} and our work, the reformulation of the MMOT under the Monge-like ansatz is adopted, where the problem size increases linearly with respect to $N_e$. However, at the moment, the ansatz is provably true only for special (e.g., two-electron/one-dimensional/special radially symmetric) cases \cite{friesecke2023strong}. 
			
			\par The considerable distinctions in models make a fair numerical comparison difficult and go beyond our scope. Nevertheless, we shall note that the model under the Monge-like ansatz explicitly characterizes the electron-electron couplings. Therefore, in contrast with others, it enables the evaluation of solution qualities through approximate OT maps between electron positions; see Figure \ref{fig:approximate transport maps 2D/3D}. Additionally, in comparison with \cite{chen2014numerical} where the authors only consider two-electron systems (i.e., H$_2$ molecule) with discretization size $K\sim3000$, we solve larger-scale problems ($K\sim10^5$) and simulate systems with more electrons ($N_e=3\sim7$). 
\end{remark}

\subsection{Scalability tests}\label{subsec:scalability}

Finally, we conduct scalability tests for the \klalm~and \sklalm~methods with respect to $K$ and $N_e$. The system under simulation shares the same normalized single-particle density $\rho$ with system 1 in Table \ref{tab:system info}, yet with varying $N_e$. We employ equimass discretization.

\par For the tests with respect to $K$, we fix $N_e=3$ and consider $K\in\{90,180,360,720\}$. For each value of $K$, the two methods are called with 10 random trials. The proximal parameter is set to $\mu_i^{(t)}\equiv0.05$. The stopping parameters are $tol=10^{-3}$ and $t_{\max}=+\infty$. The achieved err\_obj, err\_sce, and required T averaged over 10 trials are gathered in Figure \ref{fig:scalability} (a). 
\begin{figure}[!t]
\centering
\subfloat[Scalability with respect to $K$]{\includegraphics[width=\linewidth]{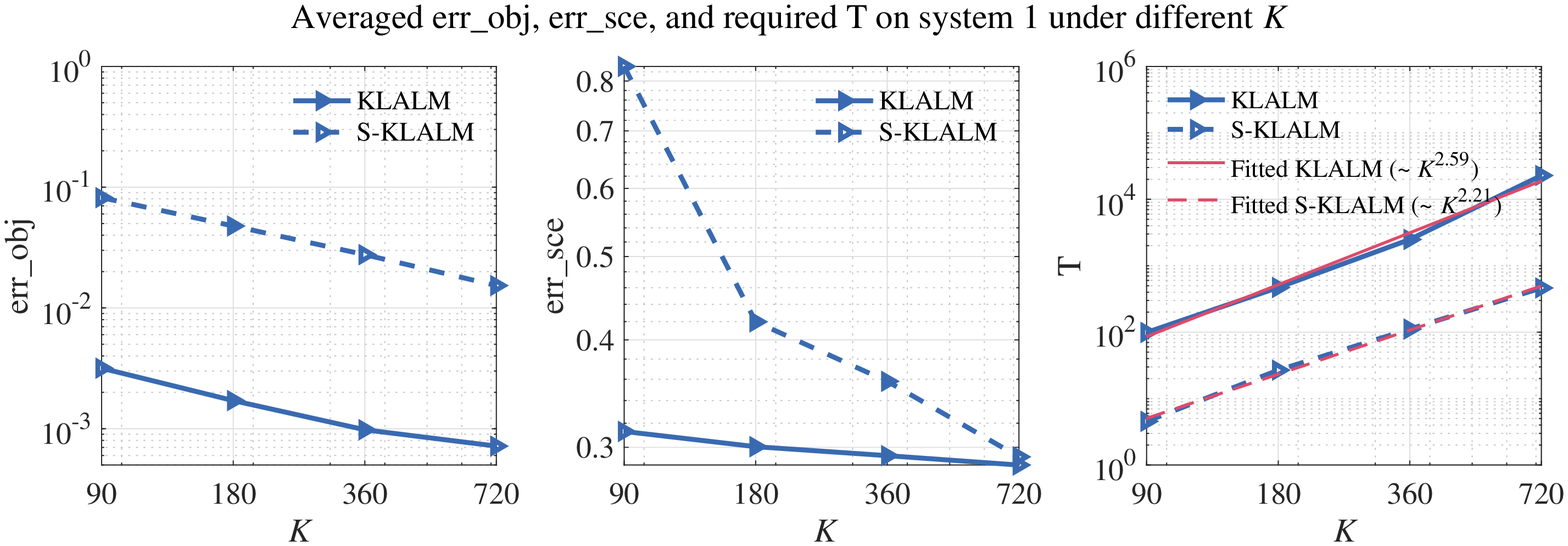}}\\
\subfloat[Scalability with respect to $N_e$]{\includegraphics[width=\linewidth]{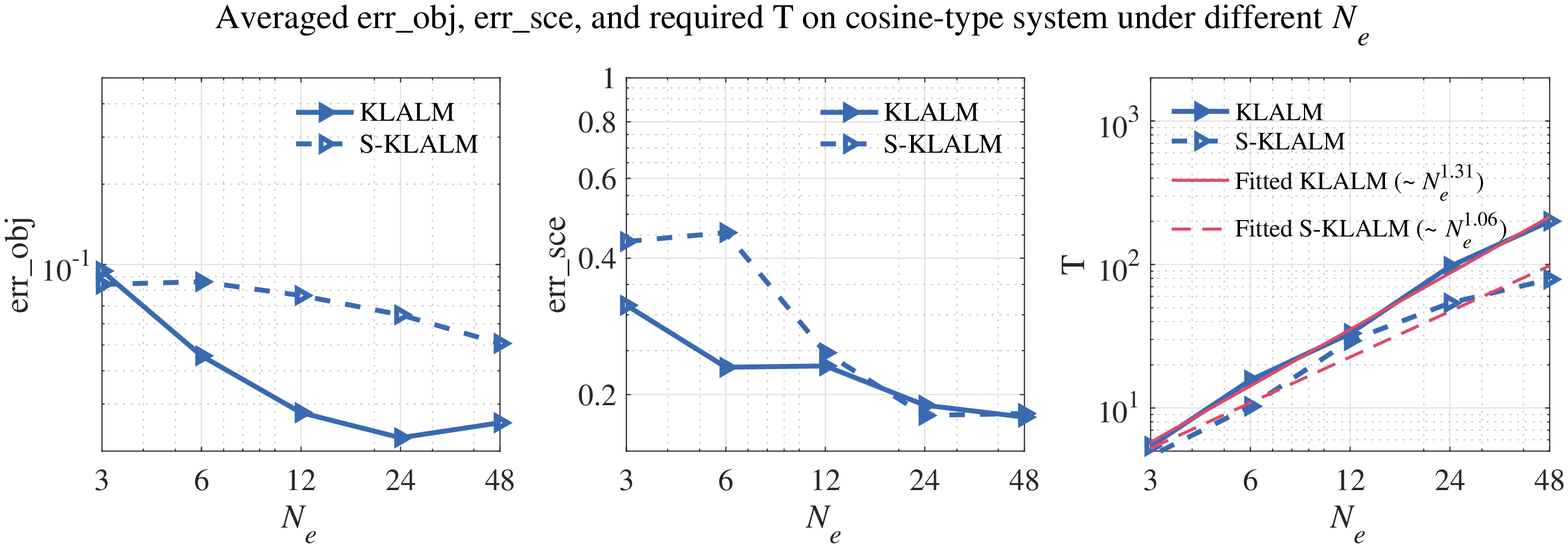}}
\caption{The achieved err\_obj, err\_sce, and required T averaged over 10 trials for each value of $K$ and $N_e$ given by the \klalm~and S-\klalm~methods on the cosine-type system (equimass discretization). The blue solid and dashed lines represent the results of the \klalm~and S-\klalm~methods, respectively. The pink solid and dashed lines denote the fitted relations between T and $K$ (or $N_e$) when using the \klalm~and \sklalm~methods, respectively. Left: err\_obj. Middle: err\_sce. Right: T. (a) Scalability with respect to $K$. For the \klalm~method, $\text{T}\sim K^{2.6}$. For the S-\klalm~method, $\text{T}\sim K^{2.2}$. (b) Scalability with respect to $N_e$. For the \klalm~method, $\text{T}\sim N_e^{1.3}$. For the \sklalm~method, $\text{T}\sim N_e^{1.1}$.}
\label{fig:scalability}
\end{figure}
In light of Table \ref{tab:complexities}, we fit the linear relation between $\log(\text{T})$ and $\log(K)$ with linear least squares and obtain the following:
\begin{align*}
\text{\klalm:}\quad\log(\text{T})&~\approx2.59\log(K)-7.22;\\
\text{\sklalm:}\quad\log(\text{T})&~\approx2.21\log(K)-8.31.
\end{align*}

\par For the tests with respect to $N_e$, we fix $K=144$ and vary $N_e$ in $\{3,6,12,24,48\}$. For each value of $N_e$, the two methods are called with 10 random trials. We fix the proximal parameter $\mu_i^{(t)}\equiv 20/\log(K)$ because $\snorm{\vvv_i^{(t)}}_\infty$ grows with $N_e$. The stopping parameters are $tol=5\times10^{-3}$ and $t_{\max}=+\infty$. The achieved err\_obj, err\_sce, and required T averaged over 10 trials are collected in Figure \ref{fig:scalability} (b). Likewise, we obtain the following relations:
\begin{align*}
\text{\klalm:}\quad\log(\text{T})&~\approx1.31\log(N_e)+0.32;\\
\text{\sklalm:}\quad\log(\text{T})&~\approx1.06\log(N_e)+0.49.
\end{align*}

\par The obtained scalings of the \klalm~and \sklalm~methods with respect to $N_e$ are in rough accordance with the computational complexities summarized in Table \ref{tab:complexities}, where $\tau=0.5$. The marked deviation in the scalings of the \klalm~and \sklalm~methods with respect to $K$ can be ascribed to the implementation in software as well as the difference in the numbers of iterations for each value of $K$.

\section{Conclusions}\label{sec:conclusion}

\par Leveraging the tools from OT and the importance sampling technique, we introduce novel \bcd-type methods, the (S-)\eralm~and (S-)\klalm~methods, for the multi-block optimization problems over the transport polytopes. These methods enjoy highly scalable schemes for the subproblems and save considerable expenditure for the calculations and storage in large-scale contexts. 
With the theory of randomized matrix sparsification, we establish for the \eralm~and \seralm~methods that the average stationarity violations tend to 0 as the problem size increases to $+\infty$ (with probability going to $1$). To the best of our knowledge, our work is the first attempt in applying the matrix entrywise sampling technique to multi-block nonconvex settings with theoretical guarantees. In the simulations of strongly correlated electron systems, though lacking convergence analysis, the (S-)\klalm~methods exhibit desirable robustness to the choices of proximal parameters;
compared with the \klalm~method, the \sklalm~method trades accuracy for efficiency. Both the \klalm~and \sklalm~methods are further brought together under a cascadic multigrid optimization framework, pursuing a decent balance for large-scale simulations. The numerical results conform to both theoretical predictions and physical intuitions. The better scalability of the sampling-based methods allows the first visualization of the approximate OT maps for the 3D systems.

\par For future work, we are eager to provide some insights into the theoretical advantages of importance sampling, as evidenced by the numerical results in section \ref{subsubsec:different gamma}. 
It is also of special interest in optimization to investigate the convergence properties for the proposed KL divergence-based methods by exploring routes that do not rest on the local Lipschitz smoothness of the proximal terms. 
Computationally, further acceleration can be gained via other techniques, such as support identification \cite{lee2023accelerating}. In terms of quantum physics application, it is necessary to study the solution landscape for problem \eqref{eqn:MPGCC l1}. Our numerical findings suggest that the objective errors at stationary points decrease as the discretization becomes finer.

\appendix

\section{Convergence of the {\sc eralm} method}\label{appsec:convergence eralm}

\par This part includes the proofs of the results concerning the \eralm~method. We first adapt a lemma from \cite{kerdoncuff2021sampled}, characterizing the objective error induced by entropy regularization on an arbitrary OT problem. 

\begin{lemma}\label{lemma:entropy error}
Let $W\in\R^{m\times n}$, $\mba\in\R^m$, $\mbb\in\R^n$, $\lambda>0$, and $\calI\subseteq\{(j,k):j=1,\ldots,m,~k=1,\ldots,n\}$. Suppose that $T'$, $T''\in\R^{m\times n}$ are the optimal solutions of 
$$\min_T~\inner{W,T},~\st~T\in\calU(\mba,\mbb),~T_{\calI^c}=0$$
and
$$\min_T~\inner{W,T}+\lambda h(T),~\st~T\in\calU(\mba,\mbb),~T_{\calI^c}=0,$$
respectively. Then $0\le\inner{W,T''-T'}\le-\lambda h(\mba\mbb^\T)$. 
\end{lemma}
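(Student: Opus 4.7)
The plan is to handle the two inequalities in $0\le\inner{W, T''-T'}\le-\lambda h(\mba\mbb^\T)$ separately. The left inequality is immediate: $T''$ is feasible for the unregularized problem while $T'$ is its minimizer, hence $\inner{W, T'}\le\inner{W, T''}$. For the right inequality, I would use the optimality of $T''$ for the entropy-regularized subproblem with $T'$ as a feasible competitor to write
$$\inner{W, T''}+\lambda h(T'')\le\inner{W, T'}+\lambda h(T'),$$
which rearranges to $\inner{W, T''-T'}\le\lambda\bigl(h(T')-h(T'')\bigr)$. It thus suffices to establish the entropy inequality $h(T')-h(T'')\le-h(\mba\mbb^\T)$.

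I would prove this by bounding the two entropy terms separately against $0$ and $h(\mba\mbb^\T)$, respectively. For the upper bound $h(T')\le 0$: under the normalization $\mba^\T\one_m=\mbb^\T\one_n=1$ (in force throughout the paper; see Assumption~\ref{assumption:discretization}(ii)), the marginal constraints force $t'_{jk}\le\min(a_j,b_k)\le 1$. Since $x(\log x-1)\le 0$ on $[0,1]$ with the convention $0\log 0=0$, summing over entries yields $h(T')\le 0$. For the lower bound $h(T'')\ge h(\mba\mbb^\T)$: I would invoke the nonnegativity of the KL divergence (Gibbs' inequality) applied to $T''$ and $\mba\mbb^\T$. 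Expanding $\KL(T'';\mba\mbb^\T)\ge 0$ via Definition~\ref{def:entropy and KL divergence}, the cross term collapses with the marginal constraints as $\sum_{j,k} t''_{jk}\log(a_jb_k)=\sum_j a_j\log a_j+\sum_k b_k\log b_k$, while $\sum_{j,k}a_jb_k=1$ under the unit-mass normalization; what remains is exactly $h(T'')-h(\mba\mbb^\T)\ge 0$. Chaining $h(T')\le 0\le h(T'')-h(\mba\mbb^\T)$ closes the argument.

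The main subtlety I expect is verifying that the support restriction $T_{\calI^c}=0$ does not interfere with either entropy bound, since $\mba\mbb^\T$ itself need not be feasible for the restricted polytope. This turns out to be painless: the upper bound on $h(T')$ is an entrywise argument on $T'$ alone, and Gibbs' inequality is a pointwise algebraic fact about the two matrices $T''$ and $\mba\mbb^\T$ irrespective of where they vanish (the zeros of $T''$ on $\calI^c$ contribute $0\log 0=0$, while $a_jb_k>0$ on the support of $T''$ follows from the marginal constraints). The role of the unit-mass normalization is to make the affine constants in the KL expansion collapse cleanly; without it, a residual $\sum a_jb_k-1$ term would obstruct the clean identification with $h(\mba\mbb^\T)$.
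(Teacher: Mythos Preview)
Your proof is correct and follows essentially the same route as the paper's: both rest on the entropy sandwich $0\ge h(T)\ge h(\mba\mbb^\T)$ for $T\in\calU(\mba,\mbb)$, combined with the optimality of $T'$ and $T''$ in their respective problems. Your identification of the unit-mass normalization as the ingredient making both entropy bounds hold is accurate; the paper's lemma (and the cited \cite{kerdoncuff2021sampled}) implicitly assumes this as well, though a small caveat is that Assumption~\ref{assumption:discretization}(ii) is not formally in force where Lemma~\ref{lemma:entropy error} is invoked in Theorem~\ref{theorem:convergence of eralm}.
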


\begin{proof}
See the proof of \cite[Lemma 1]{kerdoncuff2021sampled}, which leverages the fact that $0\ge h(T)\ge h(\mba\mbb^\top)$ for all $T\in\calU(\mba,\mbb)$.
\end{proof}

\par Next, we provide an upper bound for the size of $\calU(\mba_i,\mbb_i)$ ($i\in\{1,\ldots,N\}$).

\begin{lemma}\label{lemma:diameter}
For $i\in\{1,\ldots,N\}$, there holds 
$$\snorm{T-T'}\le2d_i,~~\text{for all}~T,T'\in\calU(\mba_i,\mbb_i).$$
\end{lemma}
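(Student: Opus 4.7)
The plan is to establish the stronger statement that $\snorm{T} \le d_i$ for every $T \in \calU(\mba_i, \mbb_i)$, after which the triangle inequality immediately gives the claimed diameter bound $\snorm{T - T'} \le \snorm{T} + \snorm{T'} \le 2 d_i$.

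To show $\snorm{T} \le d_i$, I would exploit the entrywise bound $0 \le t_{jk} \le a_{i,j}$, which follows from nonnegativity of $T$ together with the row-sum constraint $\sum_k t_{jk} = a_{i,j}$. Squaring the Frobenius norm and using this bound on one factor of $t_{jk}$ gives
\begin{equation*}
\snorm{T}^2 = \sum_{j,k} t_{jk}^2 \;\le\; \sum_{j,k} a_{i,j}\, t_{jk} \;=\; \sum_j a_{i,j} \sum_k t_{jk} \;=\; \sum_j a_{i,j}^2 \;\le\; m_i \snorm{\mba_i}_\infty^2,
\end{equation*}
so $\snorm{T} \le \sqrt{m_i}\snorm{\mba_i}_\infty$. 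Running the symmetric argument with the column-sum constraint $\sum_j t_{jk} = b_{i,k}$ and the bound $t_{jk} \le b_{i,k}$ gives $\snorm{T} \le \sqrt{n_i}\snorm{\mbb_i}_\infty$. Taking the minimum of the two yields $\snorm{T} \le d_i$.

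No step here is a serious obstacle; the only subtlety worth flagging is just making sure the one-sided entrywise bound $t_{jk} \le a_{i,j}$ (and not any tighter product-form bound) is used consistently so that the reduction to $\sum_j a_{i,j}^2$ goes through cleanly. The triangle inequality then closes the argument.
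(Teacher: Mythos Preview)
Your proof is correct and matches the paper's approach essentially step for step: bound $\snorm{T}\le\norm{\mba_i}\le\sqrt{m_i}\snorm{\mba_i}_\infty$ (and symmetrically for $\mbb_i$) using nonnegativity plus the marginal constraints, take the minimum to get $\snorm{T}\le d_i$, then apply the triangle inequality. The only cosmetic difference is that the paper writes the key inequality as $\sum_k t_{jk}^2\le\big(\sum_k t_{jk}\big)^2$ rather than via the entrywise bound $t_{jk}\le a_{i,j}$, but these are the same observation.
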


\begin{proof}
It suffices to bound the norm of an arbitrary member in $\calU(\mba_i,\mbb_i)$. Note that, for any $T\in\calU(\mba_i,\mbb_i)$,
\begin{align}
\snorm{T}&=\sqrt{\sum_{j=1}^{m_i}\sum_{t=1}^{n_i}t_{jk}^2}\le\sqrt{\sum_{j=1}^{m_i}\lrbracket{\sum_{t=1}^{n_i}t_{jk}}^2}=\snorm{T\one_{n_i}}=\norm{\mba_i}\nonumber\\
&=\sqrt{\sum_{j=1}^{m_i}a_{i,j}^2}\le\sqrt{m_i\snorm{\mba_i}_\infty^2}=\sqrt{m_i}\snorm{\mba_i}_\infty.\nonumber
\end{align}
Similarly, we have $\snorm{T}\le\sqrt{n_i}\snorm{\mbb_i}_\infty$. Then by triangle inequality and the definition of $d_i$ in Theorem \ref{theorem:convergence of eralm}, we complete the proof.
\end{proof}

\par Based upon Assumption \ref{assume:Lip grad} and Lemma \ref{lemma:diameter}, the residual function $R_i$ in equation \eqref{eqn:residual function} enjoys a Lipschitz-like property, which will be useful for the convergence proof. 

\begin{lemma}\label{lemma:Lip residual}
Let $X'$, $X''\in\bigtimes_{i=1}^N\calU(\mba_i,\mbb_i)$. Suppose that Assumption \ref{assume:Lip grad} holds and $X_i'=X_i''$ for some $i\in\{1,\ldots,N\}$. Then $\abs{R_i(X')-R_i(X'')}\le2d_iL\snorm{X'-X''}$. 
\end{lemma}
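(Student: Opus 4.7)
The plan is to reduce the bound to an elementary sup-of-difference estimate and then invoke the two ingredients already set up in the paper, namely Assumption \ref{assume:Lip grad} and Lemma \ref{lemma:diameter}.

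First I would record the standard fact that for any two real-valued functions $g_1,g_2$ on a common domain,
\[
\bigl|\max_T g_1(T)-\max_T g_2(T)\bigr|\le\max_T\bigl|g_1(T)-g_2(T)\bigr|.
\]
Applying this with $g_1(T)=\inner{\nabla_if(X'),X_i'-T}$ and $g_2(T)=\inner{\nabla_if(X''),X_i''-T}$ and using the hypothesis $X_i'=X_i''$ (so the two inner products share the same first argument $X_i':=X_i''$ inside the bracket), I obtain
\[
\bigl|R_i(X')-R_i(X'')\bigr|\le\max_{T\in\calU(\mba_i,\mbb_i)}\bigl|\inner{\nabla_if(X')-\nabla_if(X''),X_i'-T}\bigr|.
\]

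Next I would apply the Cauchy--Schwarz inequality entrywise, which yields the product
\[
\snorm{\nabla_if(X')-\nabla_if(X'')}\cdot\max_{T\in\calU(\mba_i,\mbb_i)}\snorm{X_i'-T}.
\]
The first factor is bounded by $L\snorm{X'-X''}$ thanks to Assumption \ref{assume:Lip grad}. The second factor is bounded by $2d_i$ by Lemma \ref{lemma:diameter}, since $X_i'\in\calU(\mba_i,\mbb_i)$ and $T$ ranges over the same polytope. Combining the two bounds produces the desired estimate $|R_i(X')-R_i(X'')|\le 2d_iL\snorm{X'-X''}$.

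I do not expect any real obstacle: the only mildly delicate point is making sure that, once one sets $X_i'=X_i''$, the cross term in the difference of the two inner products disappears so that a single Cauchy--Schwarz application suffices; everything else is a direct invocation of results already stated in the excerpt.
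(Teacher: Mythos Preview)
Your proof is correct and uses the same ingredients as the paper (Assumption \ref{assume:Lip grad}, Lemma \ref{lemma:diameter}, and Cauchy--Schwarz). The only cosmetic difference is that the paper introduces explicit maximizers $\bar X_i'$, $\bar X_i''$ and bounds $R_i(X')-R_i(X'')$ one direction at a time, whereas you invoke the abstract inequality $\lvert\max g_1-\max g_2\rvert\le\max\lvert g_1-g_2\rvert$ to handle both directions at once; the underlying argument is identical.
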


\begin{proof}
Let $\bar X_i'$ and $\bar X_i''$ be the optimal solutions of 
$$\min_{X_i\in\calU(\mba_i,\mbb_i)}~\inner{\nabla_if(X'),X_i}\quad\text{and}\quad\min_{X_i\in\calU(\mba_i,\mbb_i)}~\inner{\nabla_if(X''),X_i},$$
respectively. Simple algebraic calculations yield
\begin{align*}
&R_i(X')=\inner{\nabla_if(X'),X_i'-\bar X_i'}~(\text{equation \eqref{eqn:residual function}})\\
&=\inner{\nabla_if(X''),X_i'-\bar X_i'}+\inner{\nabla_if(X')-\nabla_if(X''),X_i'-\bar X_i'}\\
&\le\inner{\nabla_if(X''),X_i'-\bar X_i'}+2d_iL\snorm{X'-X''}~(\text{Assumption \ref{assume:Lip grad} and Lemma \ref{lemma:diameter}})\\
&=\inner{\nabla_if(X''),X_i''-\bar X_i'}+2d_iL\snorm{X'-X''}~(\text{since}~X_i'=X_i'')\\
&\le\inner{\nabla_if(X''),X_i''-\bar X_i''}+2d_iL\snorm{X'-X''}~(\text{the definition of }\bar X_i'')\\
&=R_i(X'')+2d_iL\snorm{X'-X''}.~(\text{equation}~\eqref{eqn:residual function})
\end{align*}
Similarly, one can show $R_i(X'')\le R_i(X')+2d_iL\snorm{X'-X''}$. These two together give the desired result.
\end{proof}

\par With the above tools in place, we are ready to establish an upper bound for the average residual over the iterate sequence generated by the \eralm~method. 

\begin{proof}[Proof of Theorem \ref{theorem:convergence of eralm}]
For any $i\in\{1,\ldots,N\}$ and $t\ge0$,
\begin{align*}
&f(X_{\le i}^{(t+1)},X_{>i}^{(t)})\le f(X_{<i}^{(t+1)},X_{\ge i}^{(t)})+\inner{C_i^{(t)},X_i^{(t+1)}-X_i^{(t)}}+\frac{L}{2}\snorm{X_i^{(t+1)}-X_i^{(t)}}^2\\
=&~f(X_{<i}^{(t+1)},X_{\ge i}^{(t)})+\alpha\inner{C_i^{(t)},\tilde X_i^{(t+1)}-X_i^{(t)}}+\frac{\alpha^2L}{2}\snorm{\tilde X_i^{(t+1)}-X_i^{(t)}}^2\\
\le&~f(X_{<i}^{(t+1)},X_{\ge i}^{(t)})+\alpha\inner{C_i^{(t)},\tilde X_i^{(t+1)}-X_i^{(t)}}+2(d_i\alpha)^2L\\
\le&~f(X_{<i}^{(t+1)},X_{\ge i}^{(t)})-\alpha\lambda h(\mba_i\mbb_i^\T)-\alpha R_i(X_{<i}^{(t+1)},X_{\ge i}^{(t)})+2(d_i\alpha)^2L,
\end{align*}
where the first inequality uses Assumption \ref{assume:Lip grad}, the second one uses Lemma \ref{lemma:diameter}, the last one relies on Lemma \ref{lemma:entropy error} and the definition \eqref{eqn:residual function} of the residual function $R_i$. The above relation further gives
\begin{equation}
\alpha R_i(X_{<i}^{(t+1)},X_{\ge i}^{(t)})\le f(X_{<i}^{(t+1)},X_{\ge i}^{(t)})-f(X_{\le i}^{(t+1)},X_{>i}^{(t)})-\alpha\lambda h(\mba_i\mbb_i^\T)+2(d_i\alpha)^2L.
\label{eqn:residual ub 1}
\end{equation}
Note that
\begin{align*}
&\abs{R_i(X_{<i}^{(t+1)},X_{\ge i}^{(t)})-{R_i(X^{(t)})}}^2\\
\le&~4(d_iL)^2\snorm{X^{(t+1)}-X^{(t)}}^2=4(d_iL)^2\sum_{i=1}^N\snorm{X_i^{(t+1)}-X_i^{(t)}}^2\\
=&~4(d_iL\alpha)^2\sum_{i=1}^N\snorm{\tilde X_i^{(t+1)}-X_i^{(t)}}^2\le16\bar d^4L^2\alpha^2N,
\end{align*}
where the first inequality follows from Lemma \ref{lemma:Lip residual} and the last one is due to Lemma \ref{lemma:diameter}. Combining the above inequality and relation \eqref{eqn:residual ub 1} yields
\begin{align*}
&~\alpha R_i(X^{(t)})=\alpha\lrsquare{R_i(X_{<i}^{(t+1)},X_{\ge i}^{(t)})+\lrbracket{{R_i(X^{(t)})-R_i(X_{<i}^{(t+1)},X_{\ge i}^{(t)})}}}\\
\le&~f(X_{<i}^{(t+1)},X_{\ge i}^{(t)})-f(X_{\le i}^{(t+1)},X_{>i}^{(t)})-\alpha\lambda h(\mba_i\mbb_i^\T)+4\bar d^2\alpha^2L\sqrt{N}+2(d_i\alpha)^2L\\
\le&~f(X_{<i}^{(t+1)},X_{\ge i}^{(t)})-f(X_{\le i}^{(t+1)},X_{>i}^{(t)})+\alpha \lambda\bar h+2\bar d^2\alpha^2L(2\sqrt{N}+1).
\end{align*}
Summing the above inequality over $i$ from $1$ to $N$ and dividing both sides by $\alpha$, one obtains
$$R(X^{(t)})\le \frac{f(X^{(t)})-f(X^{(t+1)})}{\alpha}+N\lambda\bar h+2\bar d^2LN(2\sqrt{N}+1)\alpha.$$
Summing the above relation over $t$ from 0 to $t_{\max}-1$ and dividing the both sides by $t_{\max}$, we complete the proof after noting $f(X^{(t_{\max})})\ge\underline{f}$, the definition of $\alpha$ in equation \eqref{eqn:optimal step size full}, and $2N(2\sqrt{N}+1)<(2N+1)^2$. Incidentally, the lower bound for $t_{\max}$ in equation \eqref{eqn:optimal step size full} ensures $\alpha\le1$.
\end{proof}

\begin{proof}[Proof of Corollary \ref{corollary:asymptotic full}]
By Assumption \ref{assumption:discretization} (i)-(iii), it is not hard to derive that 
\begin{align*}
\bar d&=\max_{i=1}^N\min\{\sqrt{m_i}\norm{\mba_i}_\infty,\sqrt{n_i}\norm{\mbb_i}_\infty\}=\Theta\lrbracket{\max_{i=1}^N\min\lrbrace{\frac{1}{\sqrt{m_i}},\frac{1}{\sqrt{n_i}}}},\\
\bar h&=\max_{i=1}^N\sum_{j,k}a_{i,j}b_{i,k}(1-\log a_{i,j}b_{i,k})=\Theta\lrbracket{\sum_{i=1}^N\log m_in_i}.
\end{align*}
\par Based on the above bounds, the first term on the right-hand side of inequality \eqref{eq:thm1} goes to $0$ if $t_{\max}=\Omega(\sum_{i=1}^N(m_i+n_i)^\eta)$ with $\eta>\theta$ and $M$ independent from $\{m_i\}_{i=1}^N$ and $\{n_i\}_{i=1}^N$, and the second one goes to $0$ if $\lambda=o(1/\sum_{i=1}^N\log m_in_i)$. The proof is complete. 
\end{proof}

\section{Convergence of the {\sc S-eralm} method}\label{appsec:convergence seralm}

\par We define the following auxiliary sequences: 
\begin{align*}
\bar X_i^{(t+1)}&\in\argmin_{X_i}~\inner{C_i^{(t)},X_i},~\st~X_i\in\calU(\mba_i,\mbb_i),~i=1,\ldots,N,~t\ge0,\\
\breve{X}_i^{(t+1)}&=\argmin_{X_i}~\inner{C_i^{(t)},X_i}+\hat\lambda h(X_i),~\st~X_i\in\calU(\mba_i,\mbb_i),~i=1,\ldots,N,~t\ge0.
\end{align*}

\begin{proof}[Proof of Theorem \ref{theorem:convergence of seralm}]
For any $i\in\{1,\ldots,N\}$ and $t\ge0$,
\begin{align}
\label{eqn:recursive sample}
&~\lrbracket{f(X_{\le i}^{(t+1)},X_{>i}^{(t)})-f(X_{<i}^{(t+1)},X_{\ge i}^{(t)})}/\alpha-2d_i^2L\alpha\\
\le&~\lrbracket{f(X_{\le i}^{(t+1)},X_{>i}^{(t)})-f(X_{<i}^{(t+1)},X_{\ge i}^{(t)})}/\alpha-L\alpha\snorm{\tilde X_i^{(t+1)}-X_i^{(t)}}^2/2\nonumber\\
=&~\lrbracket{f(X_{\le i}^{(t+1)},X_{>i}^{(t)})-f(X_{<i}^{(t+1)},X_{\ge i}^{(t)})-L\snorm{X_i^{(t+1)}-X_i^{(t)}}^2/2}/\alpha\nonumber\\
\le&~\inner{C_i^{(t)},X_i^{(t+1)}-X_i^{(t)}}/\alpha=\inner{C_i^{(t)},\tilde X_i^{(t+1)}-X_i^{(t)}}\nonumber\\
=&~\inner{\hat C_i^{(t)},\tilde X_i^{(t+1)}}+\inner{C_i^{(t)}-\hat C_i^{(t)},\tilde X_i^{(t+1)}}-\inner{C_i^{(t)},X_i^{(t)}}\nonumber\\
=&~\underbrace{\inner{\hat C_i^{(t)},\tilde X_i^{(t+1)}}-\inner{C_i^{(t)},\bar X_i^{(t+1)}}}_{I_1}+\underbrace{\inner{C_i^{(t)}-\hat C_i^{(t)},\tilde X_i^{(t+1)}}}_{I_2}-R_i(X_{<i}^{(t+1)},X_{\ge i}^{(t)}),\nonumber
\end{align}
where the first inequality follows from Lemma \ref{lemma:diameter}, the second one uses Assumption \ref{assume:Lip grad}, and the last equality uses the definition \eqref{eqn:residual function} of the residual function $R_i$.

\par Next, we seek to bound $I_1$ and $I_2$ in relation \eqref{eqn:recursive sample}. For the latter, 
\begin{align}
&\inner{C_i^{(t)}-\hat C_i^{(t)},\tilde X_i^{(t+1)}}\le d_i\snorm{\hat C_i^{(t)}-C_i^{(t)}}=d_i\hat\lambda\sqrt{\sum_{j,k:(j,k)\in\calI_i^{(t)}}\log^2\big(n_{s,i}\cdot p_{i,jk}^{(t)}\big)}\nonumber\\
&\quad\le d_i\hat\lambda\sqrt{|\calI_i^{(t)}|\log^2\frac{1}{(1-\gamma)w_i  n_{s,i}}}=d_i\hat\lambda\sqrt{|\calI_i^{(t)}|}\log\frac{1}{(1-\gamma)w_i  n_{s,i}},\nonumber
\end{align}
where the first inequality comes from the proof of Lemma \ref{lemma:diameter} and the second one uses the formula \eqref{eqn:subsampling probability} and Assumption \ref{assumption:kernel and sampling} (ii), the first equality follows from equation \eqref{eqn:subprob cost matrix sampling} and Assumption \ref{assumption:kernel and sampling} (ii). Following Hoeffding's inequality, for any $\iota>0$,
$$\PP\lrbracket{|\calI_i^{(t)}|\ge n_{s,i}+\iota\cdot m_in_i}\le\exp\lrbracket{-2\iota^2m_in_i}.$$
Therefore, with probability no less than $1-\exp(-2\iota^2m_in_i)$, we have
\begin{equation}
\inner{C_i^{(t)}-\hat C_i^{(t)},\tilde X_i^{(t+1)}}\le d_i\hat{\lambda}\sqrt{n_{s,i}+\iota\cdot m_in_i}\log\frac{1}{(1-\gamma)w_in_{s,i}}.
\label{eqn:sampling error 1}
\end{equation}
Regarding the former term $I_1$, 
\begin{align}
\label{eqn:sampling error 2}
&\inner{\hat C_i^{(t)},\tilde X_i^{(t+1)}}-\inner{C_i^{(t)},\bar X_i^{(t+1)}}\\
\le&~\inner{\hat C_i^{(t)},\tilde X_i^{(t+1)}}-\inner{C_i^{(t)},\breve{X}_i^{(t+1)}}-\hat\lambda h(\mba_i\mbb_i^\T)\nonumber\\
=&~\lrsquare{\inner{\hat C_i^{(t)},\tilde X_i^{(t+1)}}+\hat\lambda h(\tilde X_i^{(t+1)})}-\lrsquare{\inner{C_i^{(t)},\breve{X}_i^{(t+1)}}+\hat\lambda h(\breve{X}_i^{(t+1)})}\nonumber\\
&~+\hat\lambda\lrsquare{h(\breve{X}_i^{(t+1)})-h(\tilde X_i^{(t+1)})}-\hat\lambda h(\mba_i\mbb_i^\T)\nonumber\\
\le&~q_i(\tilde\uu_i^{(t,\star)},\tilde\vvv_i^{(t,\star)};\hat\lambda,\hat\Psi_i^{(t)})-q_i(\tilde\uu_i^{(t,\star)},\tilde\vvv_i^{(t,\star)};\hat\lambda,\Psi_i^{(t)})-2\hat\lambda h(\mba_i\mbb_i^\T)\nonumber\\
\le&~\hat c_2\hat\lambda\frac{\snorm{\hat\Psi_i^{(t)}-\Psi_i^{(t)}}_2}{\snorm{\Psi_i^{(t)}}_2}\abs{1-\frac{\snorm{\hat\Psi_i^{(t)}-\Psi_i^{(t)}}_2}{\snorm{\Psi_i^{(t)}}_2}}^{-1}-2\hat\lambda h(\mba_i\mbb_i^\T),\nonumber
\end{align}
where the first inequality uses Lemma \ref{lemma:entropy error} (with $\calI=\calI_i^{(t)}$), the second one leverages $h(\breve{X}_i^{(t+1)})\le0$, $h(\tilde X_i^{(t+1)})\ge h(\mba_i\mbb_i^\T)$, and the definition \eqref{eqn:dual of eralm} together with the strong duality and the optimality of $(\tilde\uu_i^{(t,\star)},\tilde\vvv_i^{(t,\star)})$ in problem \eqref{eqn:dual of seralm}, the last one follows from \cite{li2023efficient}. To bound the first term on the right-hand side of the inequality \eqref{eqn:sampling error 2}, we resort to \cite[Theorem 3.1]{achlioptas2007fast}. Specifically, for any $j\in\{1,\ldots,m_i\}$, $k\in\{1,\ldots,n_i\}$, it holds by equation \eqref{eqn:subsampling probability} and Assumption \ref{assumption:kernel and sampling} that $$\E\lrbracket{\frac{\hat{\psi}_{i,jk}^{(t)}}{\snorm{\Psi_i^{(t)}}_2}} = n_{s,i}\cdot p_{i,jk}^{(t)} \cdot \frac{\psi_{i,jk}^{(t)}}{n_{s,i}\cdot p_{i,jk}^{(t)}} \cdot \frac{1}{\snorm{\Psi_i^{(t)}}_2} =\frac{\psi_{i,jk}^{(t)}}{\snorm{\Psi_i^{(t)}}_2},$$
\begin{align}
\var\lrbracket{\frac{\hat{\psi}_{i,jk}^{(t)}}{\snorm{\Psi_i^{(t)}}_2}}&<\frac{(\psi_{i,jk}^{(t)})^2}{n_{s,i}\cdot p_{i,jk}^{(t)}\snorm{\Psi_i^{(t)}}_2^2}\le\frac{1}{(1-\gamma)w_i  n_{s,i}\snorm{\Psi_i^{(t)}}_2^2}\nonumber\\
&\le\frac{c_1^2(m_i+n_i)^{-2\nu}}{(1-\gamma)w_i  n_{s,i}}\le\frac{c_1^2\log^4(1+\eps)}{8(m_i+n_i)\log^4(m_i+n_i)}.\nonumber
\end{align}
In addition, $\hat{\psi}_{i,jk}^{(t)}/\snorm{\Psi_i^{(t)}}_2$ lies in an interval with length no larger than
$$\begin{aligned}
\frac{\psi_{i,jk}^{(t)}}{n_{s,i}\cdot p_{i,jk}^{(t)}\snorm{\Psi_i^{(t)}}_2}&\le\frac{1}{(1-\gamma)w_i  n_{s,i}\snorm{\Psi_i^{(t)}}_2}\le\frac{c_1(m_i+n_i)^{-\nu}}{(1-\gamma)w_i  n_{s,i}}\\
&\le\frac{c_1\log^4(1+\eps)}{8(m_i+n_i)^{1-\nu}\log^4(m_i+n_i)}.
\end{aligned}$$
Then, by Theorem 3.1 in \cite{achlioptas2007fast},
$$\PP\lrbracket{\frac{\snorm{\hat\Psi_i^{(t)}-\Psi_i^{(t)}}_2}{\snorm{\Psi_i^{(t)}}_2}\ge c_1(1+\eps+\zeta)\frac{\log^2(1+\eps)}{\log^2(m_i+n_i)}}<2\exp\lrbracket{-\frac{16\zeta^2}{\eps^4}\log^4(m_i+n_i)}$$
holds for any $\zeta>0$ and $m_i+n_i\ge152$. That is to say, with probability no less than $1-2\exp\big(-16\zeta^2\log^4(m_i+n_i)/\eps^4\big)$,
$$\frac{\snorm{\hat\Psi_i^{(t)}-\Psi_i^{(t)}}_2}{\snorm{\Psi_i^{(t)}}_2}<c_1(1+\eps+\zeta)\frac{\log^2(1+\eps)}{\log^2(m_i+n_i)},$$
and 
\begin{align}
\label{eqn:high prob}
&\frac{\snorm{\hat\Psi_i^{(t)}-\Psi_i^{(t)}}_2}{\snorm{\Psi_i^{(t)}}_2}\abs{1-\frac{\snorm{\hat\Psi_i^{(t)}-\Psi_i^{(t)}}_2}{\snorm{\Psi_i^{(t)}}_2}}^{-1}\\
<&~\frac{c_1(1+\eps+\zeta)\log^2(1+\eps)}{\log^2(m_i+n_i)-c_1(1+\eps+\zeta)\log^2(1+\eps)}.\nonumber
\end{align}

\par Combining relations \eqref{eqn:recursive sample} to \eqref{eqn:high prob}, we have, with probability no less than 
$$\lrsquare{1-2\exp\lrbracket{-\frac{16\zeta^2}{\eps^4}\log^4(m_i+n_i)}}\lrsquare{1-\exp\lrbracket{-2\iota^2m_in_i}},$$
that
\begin{align*}
f(X_{\le i}^{(t+1)},X_{>i}^{(t)})\le& ~f(X_{<i}^{(t+1)},X_{\ge i}^{(t)})-\alpha R_i(X_{<i}^{(t+1)},X_{\ge i}^{(t)})+2(d_i\alpha)^2L\\
&~-2\alpha\hat\lambda h(\mba_i\mbb_i^\T)+d_i\alpha\hat\lambda\sqrt{n_{s,i}+\iota\cdot m_in_i}\log\frac{1}{(1-\gamma)w_i  n_{s,i}}\\
&~+\alpha\hat\lambda\frac{c_1\hat c_2(1+\eps+\zeta)\log^2(1+\eps)}{\log^2(m_i+n_i)-c_1(1+\eps+\zeta)\log^2(1+\eps)},
\end{align*}
which implies
\begin{align*}
\alpha R_i(X_{<i}^{(t+1)},X_{\ge i}^{(t)})\le&~ f(X_{<i}^{(t+1)},X_{\ge i}^{(t)})-f(X_{\le i}^{(t+1)},X_{>i}^{(t)})+2(d_i\alpha)^2L\\
&~-2\alpha\hat\lambda h(\mba_i\mbb_i^\T)+d_i\alpha\hat\lambda\sqrt{n_{s,i}+\iota\cdot m_in_i}\log\frac{1}{(1-\gamma)w_i  n_{s,i}}\\
&~+\alpha\hat\lambda\frac{c_1\hat c_2(1+\eps+\zeta)\log^2(1+\eps)}{\log^2(m_i+n_i)-c_1(1+\eps+\zeta)\log^2(1+\eps)}.
\end{align*}
Following similar arguments for proving Theorem \ref{theorem:convergence of eralm} and noticing the definition of $c_3$, one obtains the desired result.
\end{proof}

\begin{proof}[Proof of Corollary \ref{corollary:asymptotic sample}]
Assumption \ref{assumption:discretization} implies bounds for $\bar d$, $\bar h$ (see the proof of Corollary \ref{corollary:asymptotic full} in Appendix \ref{appsec:convergence eralm}) and also $w_i=\Theta(1/m_in_i)$ ($i=1,\ldots,N$). 

\par The first term on the right-hand side of inequality \eqref{eq:thm2} tends to $0$ if $t_{\max}=\Theta(\sum_{i=1}^N(m_i+n_i)^\eta)$ with $\eta>\theta$ and $M$ independent from $\{m_i\}_{i=1}^N$ and $\{n_i\}_{i=1}^N$. For a fixed $\iota>0$, since
\begin{align*}
&~\sum_{i=1}^N\sqrt{n_{s,i}+\iota\cdot m_in_i}\log\frac{1}{(1-\gamma)w_in_{s,i}}\\
=&~\calO\lrbracket{\sum_{i=1}^N\lrsquare{\frac{\sqrt{m_in_i}}{(m_i+n_i)^{\nu-1/2}}\log^2(m_i+n_i)+\sqrt{m_in_i}}\log\frac{(m_i+n_i)^{2\nu-1}}{\log^4(m_i+n_i)}},
\end{align*}
with the choices of $\{n_{s,i}\}_{i=1}^N$, the second and third terms tend to $0$ if
$$\hat\lambda=o\lrbracket{\frac{1}{\sum_{i=1}^N\sqrt{m_in_i}\log(m_i+n_i)}}$$
and $\eps$, $\nu$, $\gamma$ are independent from $\{m_i\}_{i=1}^N$ and $\{n_i\}_{i=1}^N$. Incidentally, the choices of $\{n_{s,i}\}_{i=1}^N$ do not conflict with Assumption \ref{assumption:kernel and sampling} (ii) by virtue of Remark \ref{remark:justification on sample size}. Since $c_1$, $c_2$, and $\hat c_2$ are also independent from $\{m_i\}_{i=1}^N$ and $\{n_i\}_{i=1}^N$, the last term vanishes as $\sum_{i=1}^N(m_i+n_i)\to+\infty$. Finally, the probability is not less than 
\begin{align*}
&~\prod_{i=1}^N\lrbrace{\lrsquare{1-2\exp\lrbracket{-\frac{16\zeta^2}{\eps^4}\log^4(m_i+n_i)}}\lrsquare{1-\exp\lrbracket{-2\iota^2m_i n_i}}}^{t_{\max}}\\
\ge&~\prod_{i=1}^N\lrbrace{\lrsquare{1-\frac{2}{(m_i+n_i)^{16\zeta^2/\eps^4}}}\lrsquare{1-\exp\lrbracket{-2\iota^2m_i n_i}}}^{t_{\max}},
\end{align*}
which, by Assumption \ref{assumption:discretization} (iv), goes to $1$ as $\sum_{i=1}^N(m_i+n_i)\to+\infty$ after choosing $\zeta>0$ such that $\zeta>\sqrt{\eta}\eps^2/4$. The proof is completed.
\end{proof}


\begin{bibdiv}
	\begin{biblist}
		
		\bib{achlioptas2013near}{article}{
			author={Achlioptas, D.},
			author={Karnin, Z.~S.},
			author={Liberty, E.},
			title={Near-optimal entrywise sampling for data matrices},
			date={2013},
			conference={
				title={Advances in Neural Information Processing Systems},
			},
			book={
				volume={26},
				editor={Burges, C.~J.},
				editor={Bottou, L.},
				editor={Welling, M.},
				editor={Ghahramani, Z.},
				editor={Weinberger, K.~Q.},
				publisher={Curran Associates, Inc.},
			},
			pages={1565\ndash 1573},
		}
		
		\bib{achlioptas2007fast}{article}{
			author={Achlioptas, D.},
			author={McSherry, F.},
			title={Fast computation of low-rank matrix approximations},
			date={2007},
			journal={J. ACM},
			volume={54},
			number={2},
			pages={Art. 9, 19},
			doi={10.1145/1219092.1219097},
		}
		
		\bib{ahookhosh2021multi}{article}{
			author={Ahookhosh, M.},
			author={Hien, L. T.~K.},
			author={Gillis, N.},
			author={Patrinos, P.},
			title={Multi-{B}lock {B}regman proximal alternating linearized
				minimization and its application to orthogonal nonnegative matrix
				factorization},
			date={2021},
			journal={Comput. Optim. Appl.},
			volume={79},
			number={3},
			pages={681\ndash 715},
			doi={10.1007/s10589-021-00286-3},
		}
		
		\bib{ai2021optimal}{article}{
			author={Ai, M.},
			author={Wang, F.},
			author={Yu, J.},
			author={Zhang, H.},
			title={Optimal subsampling for large-scale quantile regression},
			journal={J. Complex.},
			volume={62},
			pages={101512},
			date={2021},
			publisher={Elsevier},
			issn = {0885-064X},
			doi = {10.1016/j.jco.2020.101512},
		}
		
		\bib{alfonsi2022constrained}{article}{
			author={Alfonsi, A.},
			author={Coyaud, R.},
			author={Ehrlacher, V.},
			title={Constrained overdamped {L}angevin dynamics for symmetric
				multimarginal optimal transportation},
			date={2022},
			journal={Math. Models Methods Appl. Sci.},
			volume={32},
			number={03},
			pages={403\ndash 455},
			doi={10.1142/S0218202522500105},
		}
		
		\bib{alfonsi2021approximation}{article}{
			author={Alfonsi, A.},
			author={Coyaud, R.},
			author={Ehrlacher, V.},
			author={Lombardi, D.},
			title={Approximation of optimal transport problems with marginal moments
				constraints},
			date={2021},
			journal={Math. Comp.},
			volume={90},
			number={328},
			pages={689\ndash 737},
			doi={10.1090/mcom/3568},
		}
		
		\bib{altschuler2019massively}{article}{
			author={Altschuler, J.},
			author={Bach, F.},
			author={Rudi, A.},
			author={Niles-Weed, J.},
			title={Massively scalable {S}inkhorn distances via the {N}ystr\"om method},
			conference={
				title={Advances in Neural Information Processing Systems},
			},
			book={
				editor={Wallach, H.},
				editor={Larochelle, H.},
				editor={Beygelzimer, A.},
				editor={d'Alch\'e-Buc, F.},
				editor={Fox, E.},
				editor={Garnett, R.},
				volume={32},
				publisher={Curran Associates, Inc.},
			},
			pages={4427\ndash 4437},
			date={2019},
		}
		
		\bib{arjovsky2017wasserstein}{article}{
			author={Arjovsky, M.},
			author={Chintala, S.},
			author={Bottou, L.},
			title={Wasserstein generative adversarial networks},
			date={2017},
			conference={
				title={Proceedings of the 34th International Conference on Machine Learning},
			},
			book={
				editor={Precup, D.},
				editor={Teh, Y.~W.},
				series={Proceedings of Machine Learning Research},
				volume={70},
				publisher={PMLR},
			},
			pages={214\ndash 223},
		}
		
		\bib{beck2015cyclic}{article}{
			author={Beck, A.},
			author={Pauwels, E.},
			author={Sabach, S.},
			title={The cyclic block conditional gradient method for convex
				optimization problems},
			date={2015},
			journal={SIAM J. Optim.},
			volume={25},
			number={4},
			pages={2024\ndash 2049},
			doi={10.1137/15M1008397},
		}
		
		\bib{benamou2002monge}{article}{
			author={Benamou, J.-D.},
			author={Brenier, Y.},
			author={Guittet, K.},
			title={The {M}onge-{K}antorovitch mass transfer and its computational fluid
				mechanics formulation},
			note={ICFD Conference on Numerical Methods for Fluid Dynamics (Oxford,
				2001)},
			journal={Internat. J. Numer. Methods Fluids},
			volume={40},
			date={2002},
			number={1-2},
			pages={21--30},
			issn={0271-2091},
			doi={10.1002/fld.264},
		}
		
		
		\bib{benamou2016numerical}{article}{
			author={Benamou, J.-D.},
			author={Carlier, G.},
			author={Nenna, L.},
			title={A numerical method to solve multi-marginal optimal transport
				problems with {C}oulomb cost},
			conference={
				title={Splitting Methods in Communication, Imaging, Science, and
					Engineering},
			},
			book={
				editor={Glowinski, R.},
				editor={Osher, S. J.},
				editor={Yin, W.},
				series={Sci. Comput.},
				publisher={Springer, Cham},
			},
			isbn={978-3-319-41587-1},
			isbn={978-3-319-41589-5},
			date={2016},
			pages={577--601},
		}
		
		\bib{bertsekas2016nonlinear}{book}{
			author={Bertsekas, D. P.},
			title={Nonlinear {P}rogramming},
			series={Athena Scientific Optimization and Computation Series},
			edition={3},
			publisher={Athena Scientific, Belmont, MA},
			date={2016},
			pages={xviii+861},
			isbn={978-1-886529-05-2},
			isbn={1-886529-05-1},
		}
		
		\bib{bigot2012consistent}{arXiv}{
			author={Bigot, J.},
			author={Klein, T.},
			title={Consistent estimation of a population barycenter in the {W}asserstein space},
			archiveprefix={arXiv},
			eprint={1212.2562},
			primaryclass={math.ST},
			date={2012}
		}
		
		
		\bib{bolte2014proximal}{article}{
			author={Bolte, J.},
			author={Sabach, S.},
			author={Teboulle, M.},
			title={Proximal alternating linearized minimization for nonconvex and
				nonsmooth problems},
			date={2014},
			journal={Math. Program.},
			volume={146},
			number={1-2},
			series={A},
			pages={459\ndash 494},
			doi={10.1007/s10107-013-0701-9},
		}
		
		\bib{borzi2008multigrid}{article}{
			author={Borz\`{i}, A.},
			author={Hohenester, U.},
			title={Multigrid optimization schemes for solving Bose-Einstein
				condensate control problems},
			journal={SIAM J. Sci. Comput.},
			volume={30},
			date={2008},
			number={1},
			pages={441--462},
			issn={1064-8275},
			doi={10.1137/070686135},
		}
		
		\bib{braun2022conditional}{arXiv}{
			author={Braun, G.},
			author={Carderera, A.},
			author={Combettes, C. W.},
			author={Hassani, H.},
			author={Karbasi, A.},
			author={Mokhtari, A.},
			author={Pokutta, S.},
			title={Conditional gradient methods},
			archiveprefix={arXiv},
			eprint={2211.14103},
			primaryclass={math.OC},
			date={2022}
		}
		
		\bib{braverman2021near}{article}{
			author={Braverman, V.},
			author={Krauthgamer, R.},
			author={Krishnan, A.~R.},
			author={Sapir, S.},
			title={Near-optimal entrywise sampling of numerically sparse matrices},
			date={2021},
			conference={
				title={Proceedings of the 34th Conference on Learning Theory},
			},
			book={
				editor={Belkin, M.},
				editor={Kpotufe, S.},
				series={Proceedings of Machine Learning Research},
				volume={134},
				publisher={PMLR},
			},
			pages={759\ndash 773},
		}
		
		\bib{bregman1967relaxation}{article}{
			author={Br\`egman, L. M.},
			title={A relaxation method of finding a common point of convex sets and
				its application to the solution of problems in convex programming},
			language={Russian},
			journal={\v{Z}. Vy\v{c}isl. Mat i Mat. Fiz.},
			volume={7},
			date={1967},
			pages={620--631},
			issn={0044-4669},
		}
		
		\bib{brenier1997homogenized}{article}{
			author={Brenier, Y.},
			title={A homogenized model for vortex sheets},
			journal={Arch. Rational Mech. Anal.},
			volume={138},
			date={1997},
			number={4},
			pages={319--353},
			issn={0003-9527},
			doi={10.1007/s002050050044},
		}
		
		\bib{buttazzo2012optimal}{article}{
			author={Buttazzo, G.},
			author={{De Pascale}, L.},
			author={Gori-Giorgi, P.},
			title={Optimal-transport formulation of electronic density-functional
				theory},
			date={2012},
			journal={Phys. Rev. A},
			volume={85},
			number={6},
			pages={062502},
			doi={10.1103/PhysRevA.85.062502}
		}
		
		\bib{carlier2015numerical}{article}{
			author={Carlier, G.},
			author={Oberman, A.},
			author={Oudet, E.},
			title={Numerical methods for matching for teams and {W}asserstein
				barycenters},
			journal={ESAIM Math. Model. Numer. Anal.},
			volume={49},
			date={2015},
			number={6},
			pages={1621--1642},
			issn={2822-7840},
			doi={10.1051/m2an/2015033},
		}
		
		\bib{chen2019extended}{article}{
			author={Chen, C.},
			author={Li, M.},
			author={Liu, X.},
			author={Ye, Y.},
			title={Extended {ADMM} and {BCD} for nonseparable convex minimization models
				with quadratic coupling terms: {C}onvergence analysis and insights},
			journal={Math. Program.},
			volume={173},
			date={2019},
			number={1-2},
			pages={37--77},
			issn={0025-5610},
			doi={10.1007/s10107-017-1205-9},
		}
		
		
		\bib{chen2014numerical}{article}{
			author={Chen, H.},
			author={Friesecke, G.},
			author={Mendl, C.~B.},
			title={Numerical methods for a {K}ohn-{S}ham density functional model
				based on optimal transport},
			date={2014},
			journal={J. Chem. Theory Comput.},
			volume={10},
			number={10},
			pages={4360\ndash 4368},
			doi={10.1021/ct500586q}
		}
		
		\bib{chen2017efficient}{article}{
			author={Chen, J.},
			author={Garc\'{\i}a-Cervera, C. J.},
			title={An efficient multigrid strategy for large-scale molecular
				mechanics optimization},
			journal={J. Comput. Phys.},
			volume={342},
			date={2017},
			pages={29--42},
			issn={0021-9991},
			doi={10.1016/j.jcp.2017.04.035},
		}
		
		\bib{colombo2015multimarginal}{article}{
			author={Colombo, M.},
			author={{De Pascale}, L.},
			author={{Di Marino}, S.},
			title={Multimarginal optimal transport maps for one-dimensional
				repulsive costs},
			date={2015},
			journal={Canad. J. Math.},
			volume={67},
			number={2},
			pages={350\ndash 368},
			doi={10.4153/CJM-2014-011-x},
		}
		
		\bib{cotar2013density}{article}{
			author={Cotar, C.},
			author={Friesecke, G.},
			author={Kl{\"u}ppelberg, C.},
			title={Density functional theory and optimal transportation with
				{C}oulomb cost},
			date={2013},
			journal={Comm. Pure Appl. Math.},
			volume={66},
			number={4},
			pages={548\ndash 599},
			doi={10.1002/cpa.21437},
		}
		
		\bib{cuturi2013sinkhorn}{article}{
			author={Cuturi, M.},
			title={Sinkhorn distances: {L}ightspeed computation of optimal
				transport},
			date={2013},
			conference={
				title={Advances in Neural Information Processing Systems},
			},
			book={
				editor={Burges, C.~J.},
				editor={Bottou, L.},
				editor={Welling, M.},
				editor={Ghahramani, Z.},
				editor={Weinberger, K.~Q.},
				volume={26},
				publisher={Curran Associates, Inc.},
			},
			pages={2292\ndash 2300}
		}
		
		\bib{cuturi2014fast}{article}{
			author={Cuturi, M.},
			author={Doucet, A.},
			title={Fast computation of {W}asserstein barycenters},
			date={2014},
			conference={
				title={Proceedings of the 31st International Conference on Machine Learning},
			},
			book={
				editor={Xing, E. P.},
				editor={Jebara, T.},
				series={Proceedings of Machine Learning Research},
				volume={32},
				number={2},
				publisher={PMLR},
			},
			pages={685\ndash 693},
		}
		
		\bib{dagotto2005complexity}{article}{
			author={Dagotto, E.},
			title={Complexity in strongly correlated electronic systems},
			journal={Science},
			volume={309},
			number={5732},
			pages={257\ndash 262},
			date={2005},
			publisher={American Association for the Advancement of Science},
			doi={10.1126/science.1107559}
		}
		
		\bib{di2020optimal}{arXiv}{
			author={{Di Marino}, S.},
			author={Gerolin, A.},
			title={Optimal transport losses and {S}inkhorn algorithm with
				general convex regularization},
			date={2020},
			eprint={2007.00976},
			archiveprefix={arXiv},
			primaryclass={math.OC}
		}
		
		\bib{driggs2021stochastic}{article}{
			author={Driggs, D.},
			author={Tang, J.},
			author={Liang, J.},
			author={Davies, M.},
			author={Sch\"{o}nlieb, C.-B.},
			title={A stochastic proximal alternating minimization for nonsmooth and
				nonconvex optimization},
			journal={SIAM J. Imaging Sci.},
			volume={14},
			date={2021},
			number={4},
			pages={1932--1970},
			doi={10.1137/20M1387213},
		}
		
		\bib{drineas2011note}{article}{
			author={Drineas, P.},
			author={Zouzias, A.},
			title={A note on element-wise matrix sparsification via a matrix-valued
				{B}ernstein inequality},
			date={2011},
			journal={Inform. Process. Lett.},
			volume={111},
			number={8},
			pages={385\ndash 389},
			doi={10.1016/j.ipl.2011.01.010},
		}
		
		\bib{dvurechensky2018computational}{article}{
			author={Dvurechensky, P.},
			author={Gasnikov, A.},
			author={Kroshnin, A.},
			title={Computational optimal transport: {C}omplexity by accelerated
				gradient descent is better than by {S}inkhorn's algorithm},
			date={2018},
			conference={
				title={Proceedings of the 35th International Conference on Machine Learning},
			},
			book={
				editor={Dy, J.},
				editor={Krause, A.},
				series={Proceedings of Machine Learning Research},
				volume={80},
				publisher={PMLR},
			},
			pages={1367\ndash 1376},
		}
		
		\bib{elvira2021advances}{article}{
			author={Elvira, V.},
			author={Martino, L.},
			title={Advances in importance sampling},
			date={2021},
			journal={Wiley Statist. Ref. Stat. Ref. Online},
			pages={1\ndash 14},
			doi = {10.1002/9781118445112.stat08284},
		}
		
		
		\bib{fercoq2016optimization}{article}{
			author={Fercoq, O.},
			author={Richt\'{a}rik, P.},
			title={Optimization in high dimensions via accelerated, parallel, and
				proximal coordinate descent},
			journal={SIAM Rev.},
			volume={58},
			date={2016},
			number={4},
			pages={739--771},
			issn={0036-1445},
			doi={10.1137/16M1085905},
		}
		
		\bib{filatov2015spin}{article}{
			author={Filatov, M.},
			title={Spin-restricted ensemble-referenced {K}ohn-{S}ham method: {B}asic
				principles and application to strongly correlated ground and excited states
				of molecules},
			date={2015},
			journal={Wiley Interdiscip. Rev. Comput. Mol. Sci.},
			volume={5},
			number={1},
			pages={146\ndash 167},
			doi={10.1002/wcms.1209}
		}
		
		
		\bib{friesecke2023strong}{article}{
			author={Friesecke, G.},
			author={Gerolin, A.},
			author={Gori-Giorgi, P.},
			title={The strong-interaction limit of density functional
				theory},
			date={2023},
			conference={
				title={Density {F}unctional {T}heory: {M}odeling, {M}athematical {A}nalysis, {C}omputational {M}ethods, and {A}pplications},
			},
			book={
				editor={Canc\`es, E.},
				editor={Friesecke, G.},
				series={Mathematics and Molecular Modeling},
				publisher={Springer},
			},
			pages={183\ndash 266},
			doi={10.1007/978-3-031-22340-2\_4}
		}
		
		
		\bib{friesecke2022genetic}{article}{
			author={Friesecke, G.},
			author={Schulz, A.~S.},
			author={V\"ogler, D.},
			title={Genetic column generation: {F}ast computation of high-dimensional
				multimarginal optimal transport problems},
			date={2022},
			journal={SIAM J. Sci. Comput.},
			volume={44},
			number={3},
			pages={A1632\ndash A1654},
			doi={10.1137/21M140732X},
		}
		
		
		\bib{geng2016label}{article}{
			author={Geng, X.},
			title={Label distribution learning},
			journal={IEEE Trans. Knowl. Data Eng.},
			volume={28},
			number={7},
			pages={1734\ndash 1748},
			date={2016},
			doi={10.1109/TKDE.2016.2545658}
		}
		
		
		\bib{hertrich2022inertial}{article}{
			author={Hertrich, J.},
			author={Steidl, G.},
			title={Inertial stochastic {PALM} and applications in machine learning},
			journal={Sampl. Theory Signal Process. Data Anal.},
			volume={20},
			date={2022},
			number={1},
			pages={Paper No. 4, 33},
			issn={2730-5716},
			doi={10.1007/s43670-022-00021-x},
		}
		
		\bib{hosseini2022intrinsic}{article}{
			author={Hosseini, B.},
			author={Steinerberger, S.},
			title={Intrinsic sparsity of {K}antorovich solutions},
			language={English, with English and French summaries},
			journal={C. R. Math. Acad. Sci. Paris},
			volume={360},
			date={2022},
			pages={1173--1175},
			issn={1631-073X},
			doi={10.5802/crmath.392},
		}
		
		\bib{hu2023global}{article}{
			author={Hu, Y.},
			author={Chen, H.},
			author={Liu, X.},
			title={A global optimization approach for multi-marginal optimal
				transport problems with {C}oulomb cost},
			date={2023},
			journal={SIAM J. Sci. Comput.},
			volume={45},
			number={3},
			pages={A1214\ndash A1238},
			doi={10.1137/21M1455164},
		}
		
		\bib{hu2023convergence}{article}{
			author={Hu, Y.},
			author={Liu, X.},
			title={The convergence properties of infeasible inexact proximal
				alternating linearized minimization},
			journal={Sci. China Math.},
			volume={66},
			date={2023},
			number={10},
			pages={2385--2410},
			issn={1674-7283},
			doi={10.1007/s11425-022-2074-7},
		}
		
		\bib{hu2023exactness}{article}{
			author={Hu, Y.},
			author={Liu, X.},
			title={The exactness of the $\ell_1$ penalty function for a class of
				mathematical programs with generalized complementarity constraints},
			date={2023},
			journal={Fundam. Res.},
			status={in press},
			doi={10.1016/j.fmre.2023.04.006}
		}
		
		\bib{kantorovich1942transfer}{article}{
			author={Kantorovich, L. V.},
			title={On the translocation of masses},
			journal={C. R. (Doklady) Acad. Sci. URSS (N.S.)},
			date={1942},
			pages={199--201},
		}
		
		\bib{kerdoncuff2021sampled}{article}{
			author={Kerdoncuff, T.},
			author={Emonet, R.},
			author={Sebban, M.},
			title={Sampled {G}romov {W}asserstein},
			date={2021},
			journal={Mach. Learn.},
			volume={110},
			number={8},
			pages={2151\ndash 2186},
			doi={10.1007/s10994-021-06035-1},
		}
		
		\bib{khoo2020semidefinite}{article}{
			author={Khoo, Y.},
			author={Lin, L.},
			author={Lindsey, M.},
			author={Ying, L.},
			title={Semidefinite relaxation of multimarginal optimal transport for
				strictly correlated electrons in second quantization},
			date={2020},
			journal={SIAM J. Sci. Comput.},
			volume={42},
			number={6},
			pages={B1462\ndash B1489},
			doi={10.1137/20M1310977},
		}
		
		\bib{khoo2019convex}{article}{
			author={Khoo, Y.},
			author={Ying, L.},
			title={Convex relaxation approaches for strictly correlated density
				functional theory},
			date={2019},
			journal={SIAM J. Sci. Comput.},
			volume={41},
			number={4},
			pages={B773\ndash B795},
			doi={10.1137/18M1207478},
		}
		
		\bib{kullback1951information}{article}{
			author={Kullback, S.},
			author={Leibler, R.~A.},
			title={On information and sufficiency},
			date={1951},
			journal={Ann. Math. Statistics},
			volume={22},
			number={1},
			pages={79\ndash 86},
			doi={10.1214/aoms/1177729694},
		}
		
		\bib{kundu2017recovering}{article}{
			author={Kundu, A.},
			author={Drineas, P.},
			author={Magdon-Ismail, M.},
			title={Recovering {PCA} and sparse {PCA} via hybrid-$(\ell_1,\ell_2)$
				sparse sampling of data elements},
			date={2017},
			journal={J. Mach. Learn. Res.},
			volume={18},
			number={75},
			pages={1\ndash 34},
			url={http://jmlr.org/papers/v18/16-258.html},
		}
		
		\bib{lacoste2013block}{article}{
			author={Lacoste-Julien, S.},
			author={Jaggi, M.},
			author={Schmidt, M.},
			author={Pletscher, P.},
			title={Block-coordinate {Frank-Wolfe} optimization for structural
				{SVMs}},
			date={2013},
			conference={
				title={Proceedings of the 30th International Conference on Machine Learning},
			},
			book={
				editor={Dasgupta, S.},
				editor={McAllester, D.},
				series={Proceedings of Machine Learning Research},
				volume={28},
				publisher={PMLR},
			},
			pages={53\ndash 61},
		}
		
		\bib{lee2023accelerating}{article}{
			author={Lee, C.-P.},
			title={Accelerating inexact successive quadratic approximation for
				regularized optimization through manifold identification},
			journal={Math. Program.},
			volume={201},
			date={2023},
			number={1-2},
			pages={599--633},
			issn={0025-5610},
			doi={10.1007/s10107-022-01916-2},
		}
		
		\bib{li2023importance}{article}{
			author={Li, M.},
			author={Yu, J.},
			author={Li, T.},
			author={Meng, C.},
			title={Importance sparsification for {S}inkhorn algorithm},
			date={2023},
			journal={J. Mach. Learn. Res.},
			volume={24},
			number={247},
			pages={1\ndash 44},
			url={https://jmlr.org/papers/v24/22-1311.html}
		}
		
		\bib{li2023efficient}{article}{
			author={Li, M.},
			author={Yu, J.},
			author={Xu, H.},
			author={Meng, C.},
			title={Efficient approximation of {G}romov-{W}asserstein distance using
				importance sparsification},
			journal={J. Comput. Graph. Statist.},
			volume={32},
			date={2023},
			number={4},
			pages={1512--1523},
			issn={1061-8600},
			doi={10.1080/10618600.2023.2165500},
		}
		
		\bib{li2019provable}{arXiv}{
			author={Li, Q.},
			author={Zhu, Z.},
			author={Tang, G.},
			author={Wakin, M.~B.},
			title={Provable {B}regman-divergence based methods for nonconvex
				and non-{L}ipschitz problems},
			archiveprefix={arXiv},
			eprint={1904.09712},
			primaryclass={math.OC},
			date={2019},
		}
		
		
		
		
		\bib{liu1996metropolized}{article}{
			author={Liu, J.~S.},
			title={Metropolized independent sampling with comparisons to rejection
				sampling and importance sampling},
			date={1996},
			journal={Stat. Comput.},
			volume={6},
			number={2},
			pages={113\ndash 119},
			doi={10.1007/BF00162521}
		}
		
		\bib{liu2008monte}{book}{
			author={Liu, J.~S.},
			title={Monte Carlo Strategies in Scientific Computing},
			series={Springer Series in Statistics},
			publisher={Springer, New York},
			date={2008},
			pages={xvi+343},
			isbn={978-0-387-76369-9},
			isbn={0-387-95230-6},
		}
		
		\bib{liu2021multilevel}{article}{
			author={Liu, J.},
			author={Yin, W.},
			author={Li, W.},
			author={Chow, Y. T.},
			title={Multilevel optimal transport: {A} fast approximation of
				Wasserstein-1 distances},
			journal={SIAM J. Sci. Comput.},
			volume={43},
			date={2021},
			number={1},
			pages={A193--A220},
			issn={1064-8275},
			doi={10.1137/18M1219813},
		}
		
		
		\bib{luo1993convergence}{article}{
			author={Luo, Z.-Q.},
			author={Tseng, P.},
			title={On the convergence rate of dual ascent methods for linearly
				constrained convex minimization},
			date={1993},
			journal={Math. Oper. Res.},
			volume={18},
			number={4},
			pages={846\ndash 867},
			doi={10.1287/moor.18.4.846},
		}
		
		\bib{ma2019optimal}{article}{
			author={Ma, M.},
			author={Wang, X.},
			author={Duan, Y.},
			author={Frey, S.~H.},
			author={Gu, X.},
			title={Optimal mass transport based brain morphometry for patients with
				congenital hand deformities},
			date={2019},
			journal={Vis. Comput.},
			volume={35},
			pages={1311\ndash 1325},
			doi={10.1007/s00371-018-1543-5}
		}
		
		\bib{ma2014statistical}{article}{
			author={Ma, P.},
			author={Mahoney, M.},
			author={Yu, B.},
			title={A statistical perspective on algorithmic leveraging},
			date={2014},
			conference={
				title={Proceedings of the 31st International Conference on Machine Learning},
			},
			book={
				editor={Xing, E.~P.},
				editor={Jebara, T.},
				series={Proceedings of Machine Learning Research},
				volume={32},
				publisher={PMLR},
			},
			pages={91\ndash 99},
		}
		
		
		\bib{mendl2013kantorovich}{article}{
			author={Mendl, C.~B.},
			author={Lin, L.},
			title={Kantorovich dual solution for strictly correlated electrons in
				atoms and molecules},
			date={2013},
			journal={Phys. Rev. B},
			volume={87},
			number={12},
			pages={125106},
			doi={10.1103/PhysRevB.87.125106}
		}
		
		\bib{meng2019large}{article}{
			author={Meng, C.},
			author={Ke, Y.},
			author={Zhang, J.},
			author={Zhang, M.},
			author={Zhong, W.},
			author={Ma, P.},
			title={Large-scale optimal transport map estimation using projection
				pursuit},
			date={2019},
			conference={
				title={Advances in Neural Information Processing Systems},
			},
			book={
				editor={Wallach, H.},
				editor={Larochelle, H.},
				editor={Beygelzimer, A.},
				editor={d\textquotesingle Alch\'{e}-Buc, F.},
				editor={Fox, E.},
				editor={Garnett, R.},
				volume={32},
				publisher={Curran Associates, Inc.},
			},
			pages={8118\ndash 8129}
		}
		
		\bib{monge1781memoire}{article}{
			author={Monge, G.},
			title={M\'emoire sur la th\'eorie des d\'eblais et des remblais},
			journal={Histoire de l'Acad\'emie Royale des Sciences},
			pages={666\ndash 704},
			date={1781}
		}
		
		
		\bib{owen2013monte}{book}{
			author={Owen, A.~B.},
			title={Monte {C}arlo {T}heory, {M}ethods and {E}xamples},
			publisher={Stanford University},
			date={2013},
			url={http://statweb.stanford.edu/~owen/mc/},
		}
		
		
		
		\bib{pele2009fast}{article}{
			author={Pele, O.},
			author={Werman, M.},
			title={Fast and robust earth mover's distances},
			conference={
				title={IEEE 12th International Conference on Computer Vision},
			},
			book={
				publisher={IEEE},
			},
			date={2009},
			pages={460\ndash 467},
			doi={10.1109/ICCV.2009.5459199}
		}
		
		\bib{peyre2019computational}{article}{
			author={Peyr{\'e}, G.},
			author={Cuturi, M.},
			title={Computational optimal transport: {W}ith applications to data
				science},
			date={2019},
			journal={Found. Trends Mach. Learn.},
			volume={11},
			number={5-6},
			pages={355\ndash 607},
			doi={10.1561/2200000073}
		}
		
		
		\bib{rubner1997earth}{article}{
			author={Rubner, Y.},
			author={Guibas, L. J.},
			author={Tomasi, C.},
			title={The earth mover's distance, multi-dimensional scaling, and color-based image retrieval},
			conference={
				title={Proceedings of the ARPA Image Understanding Workshop},
			},
			book={
				publisher={ARPA},
			},
			pages={661\ndash 668},
			date={1997}
		}
		
		\bib{seidl1999strong}{article}{
			author={Seidl, M.},
			title={Strong-interaction limit of density-functional theory},
			date={1999},
			journal={Phys. Rev. A},
			volume={60},
			number={6},
			pages={4387},
			doi={10.1103/PhysRevA.60.4387}
		}
		
		
		
		\bib{seidl2000simulation}{article}{
			author={Seidl, M.},
			author={Perdew, J.~P.},
			author={Kurth, S.},
			title={Simulation of all-order density-functional perturbation theory,
				using the second order and the strong-correlation limit},
			date={2000},
			journal={Phys. Rev. Lett.},
			volume={84},
			number={22},
			pages={5070},
			doi={10.1103/PhysRevLett.84.5070}
		}
		
		\bib{seidl1999strictly}{article}{
			author={Seidl, M.},
			author={Perdew, J.~P.},
			author={Levy, M.},
			title={Strictly correlated electrons in density-functional theory},
			date={1999},
			journal={Phys. Rev. A},
			volume={59},
			number={1},
			pages={51},
			doi={10.1103/PhysRevA.59.51}
		}
		
		\bib{shannon1948mathematical}{article}{
			author={Shannon, C. E.},
			title={A mathematical theory of communication},
			journal={Bell System Tech. J.},
			volume={27},
			date={1948},
			pages={379--423, 623--656},
			issn={0005-8580},
			doi={10.1002/j.1538-7305.1948.tb01338.x},
		}
		
		\bib{sinkhorn1967concerning}{article}{
			author={Sinkhorn, R.},
			author={Knopp, P.},
			title={Concerning nonnegative matrices and doubly stochastic matrices},
			date={1967},
			journal={Pacific J. Math.},
			volume={21},
			number={2},
			pages={343\ndash 348},
			doi={10.2307/2314570},
		}
		
		\bib{sun2020efficiency}{article}{
			author={Sun, R.},
			author={Luo, Z.-Q.},
			author={Ye, Y.},
			title={On the efficiency of random permutation for {ADMM} and coordinate
				descent},
			journal={Math. Oper. Res.},
			volume={45},
			date={2020},
			number={1},
			pages={233--271},
			issn={0364-765X},
			doi={10.1287/moor.2019.0990},
		}
		
		\bib{villani2003topics}{book}{
			author={Villani, C.},
			title={Topics in Optimal Transportation},
			publisher={American Mathematical Society},
			date={2003},
			volume={58},
			doi={10.1090/gsm/058},
		}
		
		
		\bib{wang2021comparative}{article}{
			author={Wang, H.},
			author={Zou, J.},
			title={A comparative study on sampling with replacement vs {P}oisson sampling in optimal subsampling},
			date={2021},
			conference={
				title={Proceedings of the 24th International Conference on Artificial Intelligence and Statistics},
			},
			book={
				editor={Banerjee, A.},
				editor={Fukumizu, K.},
				series={Proceedings of Machine Learning Research},
				volume={130},
				publisher={PMLR},
			},
			pages={289\ndash 297},
		}
		
		\bib{wang2022sampling}{article}{
			author={Wang, J.},
			author={Zou, J.},
			author={Wang, H.},
			title={Sampling with replacement vs {P}oisson sampling: {A} comparative study
				in optimal subsampling},
			journal={IEEE Trans. Inform. Theory},
			volume={68},
			date={2022},
			number={10},
			pages={6605--6630},
			issn={0018-9448},
		}
		
		\bib{wright2015coordinate}{article}{
			author={Wright, S. J.},
			title={Coordinate descent algorithms},
			journal={Math. Program.},
			volume={151},
			date={2015},
			number={1},
			pages={3--34},
			issn={0025-5610},
			doi={10.1007/s10107-015-0892-3},
		}
		
		\bib{xia2018cascadic}{article}{
			author={Xia, Q.},
			author={Shi, T.},
			title={A cascadic multilevel optimization algorithm for the design of composite structures with curvilinear fiber based on {S}hepard interpolation},
			journal={Compos. Struct.},
			volume={188},
			pages={209--219},
			date={2018},
			doi={10.1016/j.compstruct.2018.01.013}
		}
		
		\bib{xie2020fast}{article}{
			author={Xie, Y.},
			author={Wang, X.},
			author={Wang, R.},
			author={Zha, H.},
			title={A fast proximal point method for computing exact {W}asserstein
				distance},
			date={Jul. 22--25, 2020},
			conference={
				title={Proceedings of the 35th Uncertainty in Artificial Intelligence
					Conference},
			},
			book={
				editor={Adams, R.~P.},
				editor={Gogate, V.},
				series={Proceedings of Machine Learning Research},
				volume={115},
				publisher={PMLR},
			},
			pages={433\ndash 453},
		}
		
		\bib{xu2019learning}{article}{
			author={Xu, L.},
			author={Sun, H.},
			author={Liu, Y.},
			title={Learning with batch-wise optimal transport loss for {3D} shape
				recognition},
			date={2019},
			conference={
				title={Proceedings of the IEEE/CVF Conference on Computer Vision and Pattern Recognition},
			},
			book={
				publisher={IEEE},
			},
			booktitle={Proceedings of the IEEE/CVF Conference on Computer Vision and
				Pattern Recognition},
			pages={3333\ndash 3342},
		}
		
		
		\bib{yang2022bregman}{article}{
			author={Yang, L.},
			author={Toh, K.-C.},
			title={Bregman proximal point algorithm revisited: {A} new inexact
				version and its inertial variant},
			date={2022},
			journal={SIAM J. Optim.},
			volume={32},
			number={3},
			pages={1523\ndash 1554},
			doi={10.1137/20M1360748},
		}
		
		\bib{yu2022optimal}{article}{
			author={Yu, J.},
			author={Wang, H.},
			author={Ai, M.},
			author={Zhang, H.},
			title={Optimal distributed subsampling for maximum quasi-likelihood
				estimators with massive data},
			date={2022},
			journal={J. Amer. Statist. Assoc.},
			volume={117},
			number={537},
			pages={265\ndash 276},
			doi={10.1080/01621459.2020.1773832},
		}
		
		\bib{zhao2018label}{article}{
			author={Zhao, P.},
			author={Zhou, Z.},
			title={Label distribution learning by optimal transport},
			conference={
				title={Proceedings of the AAAI Conference on Artificial Intelligence},
			},
			book={
				editor={McIlraith, S. A.},
				editor={Weinberger, K. Q.},
				volume={32},
				number={1},
				publisher={AAAI Press}
			},
			date={2018},
			pages={4506\ndash 4513},
			doi={10.1609/aaai.v32i1.11609}
		}
		
	\end{biblist}
\end{bibdiv}

\end{document}